\title{L-spaces, taut foliations and fibered hyperbolic two-bridge links}
\author{Diego Santoro}
\address{University of Vienna, Oskar-Morgenstern-Platz 1, 1090 Vienna}
\email{diego.santoro95@gmail.com}
\newtheorem{lemma}{Lemma}[section]
\newtheorem{teo}[lemma]{Theorem}
\newtheorem{prop}[lemma]{Proposition} 
\newtheorem{cor}[lemma]{Corollary} 
\newtheorem{conj}[lemma]{Conjecture}
\theoremstyle{definition}
\newtheorem{defn}[lemma]{Definition}
\newtheorem{example}[lemma]{Example}
\theoremstyle{remark}
\newtheorem{rem}[lemma]{Remark}
\newcommand{\matR} {\ensuremath {\mathbb{R}}}
\newcommand{\matQ} {\ensuremath {\mathbb{Q}}}
\newcommand{\matZ} {\ensuremath {\mathbb{Z}}}
\newcommand{\matD} {\ensuremath {\mathbb{D}}}
\newcommand{\lk} {\ensuremath {{\rm lk}}}
\begin{document}

\begin{abstract}    

We prove that if $M$ is a rational homology sphere that is Dehn surgery on a fibered hyperbolic two-bridge link, then $M$ is not an $L$-space if and only if $M$ supports a co-orientable taut foliation. As a corollary we show that if $K'$ is obtained by a non-trivial knot $K$ as result of an operation called \emph{two-bridge replacement}, then all non-meridional surgeries on $K'$ support co-orientable taut foliations. This operation generalises Whitehead doubling and as a particular case we deduce that all non-meridional surgeries on Whitehead doubles of a non-trivial knot support co-orientable taut foliations.
\end{abstract}

\maketitle


\section{Introduction}
In recent years the field of low-dimensional topology has seen a growing interest in the study of the so-called $L$-space conjecture. This conjecture predicts that the following notions of ``complexity" are all equivalent:

\begin{conj}[$L$-space conjecture]\label{L space conjecture} 
For an irreducible oriented rational homology $3$-sphere $M$, the following are
equivalent:
\begin{enumerate}
    \item  $M$ supports a co-oriented taut foliation;
    \item  $M$ is not an $L$-space, i.e. its Heegaard Floer homology is not minimal;
    \item  $M$ is left-orderable, i.e. $\pi_1(M)$ is left-orderable.
\end{enumerate}
\end{conj}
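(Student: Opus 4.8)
The plan is to prove the three conditions equivalent by establishing a cycle of implications $(1)\Rightarrow(2)\Rightarrow(3)\Rightarrow(1)$, so that closing the loop yields the full equivalence. I should stress at the outset that only the first arrow is presently a theorem, and that the statement as a whole is a celebrated open conjecture; what follows is therefore the shape of an attack together with an honest account of where it stalls.

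For $(1)\Rightarrow(2)$ I would argue first geometrically and then Floer-theoretically. Given a coorientable taut foliation $\mathcal F$ on $M$, the Eliashberg--Thurston deformation produces a weakly symplectically fillable contact structure $\xi$ transverse to $\mathcal F$, with the $C^0$ regularity issues handled by the Bowden and Kazez--Roberts approximation results. Weak fillability forces the Ozsv\'ath--Szab\'o contact invariant $c(\xi)\in\widehat{HF}(-M)$ to be non-zero, and the theorem that a manifold carrying such a non-vanishing class cannot be an $L$-space --- its $\widehat{HF}$ exceeds the minimal rank $|H_1(M;\matZ)|$ --- then yields $(2)$. Thus $(1)\Rightarrow(2)$ is complete.

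The remaining arrows are the genuine substance of the conjecture. For $(2)\Rightarrow(1)$, equivalently for constructing a taut foliation out of the purely homological hypothesis that $\widehat{HF}$ is non-minimal, there is no known general mechanism, and this is precisely the main obstacle: Heegaard Floer homology is an algebraic invariant with no built-in geometric output, so converting ``non-minimal rank'' into an honest codimension-one foliation demands case-by-case geometric input --- sutured manifold hierarchies, branched surfaces, explicit surgery descriptions --- of exactly the kind this paper supplies for fibered hyperbolic two-bridge links. For $(1)\Rightarrow(3)$ I would run the universal-circle machinery of Thurston and Calegari--Dunfield: a coorientable taut foliation yields a faithful action of $\pi_1(M)$ on a circle, from which one extracts a left-invariant order and hence left orderability; the gap here lies in the delicate dynamics needed to promote the circle action to a genuine line order in full generality.

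Accordingly the hard part is not $(1)\Rightarrow(2)$ but the two ``synthesis'' arrows, which must manufacture geometry (a foliation) or dynamics (a circle action, hence a left order) out of Floer-theoretic or order-theoretic data. Since no uniform procedure for this is known, the realistic plan --- and the one adopted in this paper --- is to verify the equivalences on structured families where the topology is explicit enough to build the foliations by hand and to read off the $L$-space condition from a surgery formula, thereby accumulating technique and evidence toward the general statement rather than settling it outright.
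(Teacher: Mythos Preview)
The statement you were asked to address is not a theorem in the paper but a \emph{conjecture}: it is labeled as such and the paper makes no attempt to prove it in general. You recognize this from the outset and say so plainly, which is the correct response --- there is no proof to compare against, because the paper offers none.

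Your summary of what is known matches the paper's introduction: the implication $(1)\Rightarrow(2)$ is a theorem via Ozsv\'ath--Szab\'o together with the Bowden and Kazez--Roberts approximation results, while the converse and the connections to left-orderability remain open in general. Your closing paragraph also correctly describes the paper's actual strategy: rather than attacking the conjecture wholesale, it verifies the equivalence $(1)\Leftrightarrow(2)$ on the specific family of rational homology spheres arising as surgeries on fibered hyperbolic two-bridge links, by constructing the foliations explicitly via branched surfaces and computing the $L$-space slopes directly.

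One minor organizational slip: you announce a cycle $(1)\Rightarrow(2)\Rightarrow(3)\Rightarrow(1)$, but the ``remaining arrows'' you then discuss are $(2)\Rightarrow(1)$ and $(1)\Rightarrow(3)$, which are not the arrows of that cycle. Since you are not claiming to prove anything beyond $(1)\Rightarrow(2)$, this is harmless, but it is worth tidying.
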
 
The equivalence between $(1)$ and $(2)$ was conjectured by Juh\'asz in \cite{J}, while the equivalence between $(2)$ and $(3)$ was conjectured by Boyer, Gordon and Watson in \cite{BGW}.
This conjecture predicts strong connections among geometric, dynamical,
Floer homological, and algebraic properties of 3-manifolds. Despite its boldness, as a result of the work by many researchers \cite{BC,BGW,BNR,CLW,EHN,HRRW,LS} it is now known that the conjecture holds for all the graph manifolds, i.e. the manifolds whose JSJ decomposition includes only Seifert fibered pieces. Moreover the results of Oszváth-Szab\'o \cite{OS}, Bowden \cite{Bow} and Kazez-Roberts \cite{KR2} imply that in general manifolds supporting co-orientable taut foliations are not $L$-spaces.
\\

A natural way to investigate this conjecture is by using Dehn surgery descriptions of $3$-manifolds. For instance, it is known that if a non-trivial knot $K$ in $S^3$ has a positive surgery that is an $L$-space, then $K$ is prime \cite{Krca}, fibered \cite{G,Ni} and strongly quasipositive \cite{Hedden}. Moreover, the $r$-framed surgery on such a knot $K$ is an $L$-space if and only if $r\geq 2g(K)-1$, where $g(K)$ denotes the genus of $K$ \cite{KMOS}. Taut foliations on manifolds obtained as surgery on knots in $S^3$ are constructed for example in \cite{R,R1,DR1,DR2, K} and it is possible to prove the left-orderability of some of these
manifolds by determining which of these foliations have vanishing Euler class, as done in \cite{H}. Another approach to study the left-orderability of surgeries on knots is via representation theoretic methods, as presented in \cite{CuD} and \cite{DuR}.
\\

When it comes to investigate surgeries on links, the story becomes more mysterious. For instance there is no generalisation of the result of \cite{KMOS} we cited in the previous paragraph -- even if it holds in some cases, see for example \cite[Lemma~2.6]{S} -- and links admitting $L$-space surgeries need not to be fibered \cite[Example~3.9]{Liu} nor quasipositive \cite[Proposition~1.5]{BeiCav}. Concerning foliations, in \cite{KR1} Kalelkar and Roberts construct co-orientable taut foliations on some fillings of $3$-manifolds that fiber over the circle and in particular their methods can also be applied to surgeries on fibered links. In \cite{S}, taut foliations on all the surgeries on the Whitehead link that are not $L$-spaces are constructed.

In this paper we study the $L$-space conjecture for manifolds that can be obtained as surgery on two-bridge links. A two-bridge link is either hyperbolic or isotopic to the $(2n,2)$ torus link, for some integer $n$. In the latter case the exterior is a Seifert fibered manifold and since the conjecture has been proven for graph manifolds -- in particular, see \cite{EHN, JN, Naimi, LSIII, BGW} for the case of Seifert fibered manifolds -- we focus our study on hyperbolic two-bridge links. 
The main theorem of this paper is the following:

\begin{teo}\label{thm: main theorem}
Let $L$ be a fibered hyperbolic two-bridge link and let $M$ be a manifold obtained as Dehn surgery on $L$. Then $M$ admits a co-orientable taut foliation if and only if $M$ is not an $L$-space.
\end{teo}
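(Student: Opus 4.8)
Since manifolds carrying coorientable taut foliations are never L-spaces by the results of Ozsv\'ath--Szab\'o \cite{OS}, Bowden \cite{Bow} and Kazez--Roberts \cite{KR2} cited above, one implication is immediate; the entire content of the theorem is the construction of a coorientable taut foliation on every surgery $M$ that fails to be an L-space. The plan is to organize the argument around the explicit fibered structure of $L$ and to match, slope by slope, a family of taut foliations against the set of L-space surgeries.

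First I would record the classification of fibered hyperbolic two-bridge links. Writing $L=\mathfrak{b}(\alpha,\beta)$ and expanding $\alpha/\beta$ as a continued fraction, the fibered links are exactly those whose fiber surface is a linear plumbing of positive and negative Hopf bands; equivalently, the continued fraction can be taken with all entries equal to $\pm 2$. This yields a completely explicit description of the fibration of the complement $S^3\setminus L$ over $S^1$: the fiber $S$ is a bounded surface of known genus and number of boundary components (one or two, according to whether $L$ is a knot or a two-component link), and the monodromy $\phi$ is a product of Dehn twists, right- or left-handed according to the signs $\epsilon_i=\pm 1$ of the Hopf bands. Here hyperbolicity serves precisely to exclude the torus links, for which the conjecture is already known from the graph-manifold case.

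With this structure in hand I would construct the foliations by adapting the branched-surface technique of Roberts and Kalelkar--Roberts \cite{KR1}, which builds coorientable taut foliations on Dehn fillings of fibered three-manifolds, and which I already used for the Whitehead link in \cite{S}, the simplest nontrivial member of this family. Starting from a foliation almost transverse to the fibration and spinning it near the boundary tori, one produces for each boundary component an interval of realizable boundary slopes; taking products over the components yields a region $\mathcal{R}_{\mathrm{fol}}$ in the space of multislopes for which $M$ supports a coorientable taut foliation, with coorientability tracked throughout. In parallel I would compute the L-space surgeries, obtaining a region $\mathcal{R}_{L}$: the explicit rational-tangle (bordered Floer / immersed-curve) description of the two-bridge complement makes the Heegaard Floer homology of all fillings computable, so that $\mathcal{R}_{L}$ can be determined exactly in terms of the same combinatorial word in the $\epsilon_i$.

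The crux, and the step I expect to be the main obstacle, is to prove that $\mathcal{R}_{\mathrm{fol}}$ and $\mathcal{R}_{L}$ are exactly complementary among the multislopes giving rational homology spheres. This demands, first, pushing the foliation construction all the way to the boundary of the L-space region, including the delicate boundary slopes where the transverse foliation degenerates, and second, a uniform bookkeeping of the two regions as the signs $\epsilon_i$ vary, together with a careful treatment of the fillings that are reducible or that collapse to the one-component case. Since both regions are ultimately governed by the same data (the word in the $\epsilon_i$ and the fiber framing of $S$), I expect the matching to reduce to comparing their boundary walls directly; controlling the endpoints simultaneously for every link in the family is where the real work lies.
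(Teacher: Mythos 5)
Your reduction of the theorem to (i) classifying the fibered hyperbolic two-bridge links via continued fractions with entries $\pm 2$, (ii) producing a region $\mathcal{R}_{\mathrm{fol}}$ of multislopes carrying coorientable taut foliations, and (iii) computing the L-space region $\mathcal{R}_L$ and matching the two, is the correct skeleton and agrees with the paper's strategy. However, both of the engines you propose for (ii) and (iii) have genuine gaps. For (ii), the Kalelkar--Roberts construction of ``spinning a foliation almost transverse to the fibration near the boundary'' only yields foliations for multislopes in a \emph{neighbourhood of the fiber multislope} $(0,\dots,0)$; this is nowhere near the complement of the L-space region, which for the links $L_n$ is everything outside $[n,\infty]\times[n,\infty]$ and for all other links in the family is the entire set of finite multislopes. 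The paper instead builds, for each link, several sink-disc-free branched surfaces from the fiber surface together with product discs over carefully chosen arcs, invokes Li's theorem on laminar branched surfaces with boundary to get essential laminations realizing large explicit regions of boundary slopes (e.g.\ $(\infty,1)^2$, $(-1,\infty)^2$, mixed quadrants), and then must pass to auxiliary $3$- and $4$-component fibered links and re-express the two-bridge link as a surgery on them to reach the remaining slopes. Without some mechanism of this kind your $\mathcal{R}_{\mathrm{fol}}$ will not exhaust the non-L-space slopes, and the ``crux'' paragraph of your proposal is precisely where the proof still has to happen.

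For (iii), an immersed-curve or bordered Floer computation of all fillings of a two-component link complement is not available off the shelf in the way it is for knot complements, so the claim that $\mathcal{R}_L$ is ``computable exactly'' this way is optimistic. The paper avoids this entirely: it uses the Rasmussen--Rasmussen interval structure of L-space filling slopes of a rational homology solid torus, applied one boundary component at a time, so that exhibiting a single L-space surgery (the $(n,n)$-surgery on $L_n$, identified by hand via a surgery presentation reducing it to the Poincar\'e sphere) forces the whole quadrant $[n,\infty]\times[n,\infty]$ to consist of L-spaces. A further point you miss is that the ``wall-matching'' problem only arises for one family: for every fibered hyperbolic two-bridge link other than the $L_n$ and their mirrors, the L-space region is empty and one must construct foliations on \emph{all} finite surgeries, which is a different (and in some ways harder) task than matching boundary walls.
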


\begin{rem}
In contrast to the case of knots, the property of being fibered for a link depends on the choice of an orientation of the link. This happens for instance in the case of the $(2n,2)$ torus link for $n>1$, see for example \cite[Example~3.1]{BaaGra}. On the other hand, changing orientations of the components of $L$ has no effects on the study of the $L$-space conjecture for the surgeries on $L$. For this reason we will consider links as unoriented and say that a link is fibered if there exists an orientation for which it is a fibered link.
\end{rem}

\begin{rem}
Theorem \ref{thm: main theorem} is a result about the exterior of the links.
Links are not uniquely determined by their complement, hence it can be a priori possible that many non-isotopic hyperbolic two-bridge links share the same exterior. However it follows from \cite[Theorem~1.4]{MW} that the exteriors of hyperbolic two-bridge links (with two components) are not even commensurable.
\end{rem}

We will be able to completely determine for each fibered hyperbolic two-bridge link $L$ the set of surgeries on $L$ that are $L$-spaces. We denote by $\mathcal{L}(L)$ the set of slopes on $L$ that produce $L$-spaces. Recall that since $L$ is a link in $S^3$ there is a canonical identification between the set of slopes on $L$ and $\overline{\matQ}\times \overline{\matQ}$, where $\overline{\matQ}=\matQ\cup \{\infty\}$, obtained by considering on each component of $L$ its canonical meridian and longitude basis. Also, notice that we can reduce our study to the rational surgeries. In fact the components of two-bridge links are unknotted, so when one of the two surgery coefficients is infinite the only rational homology spheres that can be obtained are $S^3$ and lens spaces. We will prove the following proposition, where the link $L_n$ is shown in Figure \ref{figure:link L_n}, for $n\geq 1$.

\begin{prop}\label{prop: link L_n}
Let $L$ be a fibered hyperbolic two-bridge link. Then
\begin{itemize}
\item if $L$ is isotopic as an unoriented link to $L_n$, then $\mathcal{L}(L)\cap \matQ^2=\big{(}[n, +\infty)\times [n, +\infty)\big{)}\cap \matQ^2$;
\item if $L$ is isotopic as an unoriented link to the mirror of $L_n$, then $\mathcal{L}(L)\cap \matQ^2=\big{(}(-\infty,-n]\times (-\infty, -n]\big{)}\cap \matQ^2$;
\item if $L$ is not isotopic as an unoriented link to any of the links $L_n$ or their mirrors, then $\mathcal{L}(L)\cap \matQ^2=\emptyset$.
\end{itemize}
\end{prop}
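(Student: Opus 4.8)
The plan is to split the statement into a \emph{classification} step and a \emph{computation} step, and to reduce everything to integer surgeries on one well-understood family. First I would recall that two-bridge links are classified by a fraction together with a continued fraction expansion, and use the classical fiberedness criterion: a two-bridge link is fibered exactly when its (even) continued fraction expansion can be taken with all partial quotients equal to $\pm 2$. The non-hyperbolic links among these are the $(2,2k)$-torus links, corresponding to constant-sign expansions, so the fibered hyperbolic two-bridge links are precisely those whose $\pm 2$-expansion uses both signs. I would organise this set up to unoriented isotopy and mirroring and isolate the one-parameter subfamily $L_n$ of Figure~\ref{figure:link L_n} inside it. Since mirroring $S^3$ carries $L$-spaces to $L$-spaces and negates both surgery coefficients, the second bullet follows formally from the first; it then suffices to compute $\mathcal{L}(L_n)$ and to show that \emph{every other} link in the list admits no $L$-space surgery whatsoever. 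I would also record here the reduction to finite surgeries noted before the proposition, and the fact (used repeatedly) that possessing even a single $L$-space surgery forces $L$ to be an $L$-space link in the sense of \cite{Liu}, so that its whole set of $L$-space slopes is controlled by its $H$-function.

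For the links $L_n$ I would first verify that they are $L$-space links, for instance by exhibiting one large $L$-space surgery or, inductively, by observing that $L_n$ and $L_{n+1}$ differ by a $\pm 1$ twist along an unknotted axis encircling two strands, which yields a surgery exact triangle relating their fillings. I would then compute the $H$-function from the multivariable Alexander polynomial, available because two-bridge links are alternating and hence Floer-thin, and feed it into the surgery formula for two-component $L$-space links to describe $\mathcal{L}(L_n)\cap\matQ^2$ as an up-closed region $\{(s_1,s_2): s_1\ge c,\ s_2\ge c\}$. The remaining task is to identify the threshold $c$ with $n$, which I expect to follow from the Seifert genus of the fiber together with the symmetry of $L_n$ in its two components. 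Passing from integer to rational slopes is handled by monotonicity of the $L$-space property under Dehn filling: the $L$-space locus of an $L$-space link is up-closed in $\overline{\matQ}^2$, so the integer computation already pins down the full region $\big([n,\infty)\times[n,\infty)\big)\cap\matQ^2$.

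For a fibered hyperbolic two-bridge link $L$ not isotopic to any $L_n$ or its mirror, I would prove $\mathcal{L}(L)\cap\matQ^2=\emptyset$ by showing $L$ is \emph{not} an $L$-space link, which by the previous paragraph excludes every rational $L$-space surgery. Here I would exploit two constraints on $L$-space links: that each component and each proper sublink must again be an $L$-space link, and that for a Floer-thin link the $L$-space condition is equivalent to a rigid form of the multivariable Alexander polynomial. Running this criterion across the whole family of $\pm 2$-expansions with both signs, and checking that the required form of the Alexander polynomial holds only for the expansions defining $L_n$ and their mirrors, completes the trichotomy.

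I expect the main obstacle to be precisely this last step: ruling out $L$-space surgeries uniformly over the \emph{entire} infinite family of remaining links, rather than one case at a time. The difficulty is that the surgered manifolds are generically hyperbolic, so no soft Seifert-fibered or graph-manifold argument is available and the obstruction must come from the fine structure of link Floer homology. Turning the Alexander-polynomial condition into a criterion that is simultaneously necessary and sufficient for these specific alternating links, and extracting from it the exact numerical threshold $n$ in the positive case, is where the real work lies.
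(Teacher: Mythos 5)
There is a genuine gap, and it sits exactly where you predict the main obstacle to be. Your plan reduces the third bullet (and the reverse inclusion in the first) to the implication ``$L$ is not an $L$-space link $\Rightarrow$ $\mathcal{L}(L)\cap\matQ^2=\emptyset$'', but this implication is not a theorem: being an $L$-space link only concerns \emph{sufficiently large} surgeries, and for links (unlike knots) a single finite $L$-space filling is not known to force all large fillings to be $L$-spaces --- the introduction of the paper points out explicitly that the KMOS-type control of the $L$-space surgery set has no generalisation to links. What a single $L$-space filling does give, via the Rasmussen--Rasmussen interval theorem (Theorem \ref{thm:RR}), is a restricted region as in Proposition \ref{prop: Link with unknotted components}, and that argument needs sign hypotheses and $r_1r_2>\lk(\mathcal{L})^2$; it does not exclude, say, $L$-space fillings with coefficients of mixed sign on a link that is not an $L$-space link. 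So showing that the remaining links are not $L$-space links (even granting Dawra's classification, which the paper cites in the remark after Proposition \ref{prop: Ln is L-space link}) does not finish the trichotomy. The same issue affects your first bullet: the $H$-function machinery describes the $L$-space surgery locus of a link already known to be an $L$-space link, but the upper bound $\mathcal{L}(L_n)\cap\matQ^2\subset[n,\infty)^2$ still needs an argument that rules out $L$-spaces outside that region.

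The paper closes both gaps by an entirely different mechanism: Section \ref{sec:taut foliations} constructs coorientable taut foliations, via laminar branched surfaces in the fibered exteriors and Li's theorem (Theorem \ref{boundary train tracks}), on \emph{every} finite surgery of every fibered hyperbolic two-bridge link other than the $L_n$ and their mirrors, and on every finite surgery of $L_n$ outside $[n,\infty)\times[n,\infty)$; since $L$-spaces admit no coorientable taut foliation, this yields the emptiness statement and the reverse inclusion for $L_n$ simultaneously (and is where the real work of the paper lies). For the positive inclusion $\big([n,\infty)\times[n,\infty)\big)\cap\matQ^2\subset\mathcal{L}(L_n)$ the paper is also lighter-handed than your $H$-function computation: it exhibits $L_n$ as surgery on a $3$-component link, identifies two fillings of the resulting rational homology solid torus with the Poincar\'e sphere and $S^3$, computes the homological longitude, and applies Theorem \ref{thm:RR} together with Proposition \ref{prop: Link with unknotted components}. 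Your Floer-theoretic route to that inclusion could plausibly be made to work, but the negative half of the proposition needs the foliation input (or a genuinely new Floer argument), not just the non-$L$-space-link property.
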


We observe that $L_1$ is the Whitehead link. The $L$-space conjecture for surgeries on the Whitehead link was studied by the author in \cite{S}. As a consequence of the previous proposition we have the following Dehn surgery characterisation of the Whitehead link:

\begin{cor}
Let $L$ be a fibered hyperbolic two-bridge link and suppose that the $(1,1)$-surgery on $L$ is an $L$-space. Then $L$ is isotopic, as unoriented link, to the Whitehead link.
\end{cor}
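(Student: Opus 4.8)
The plan is to deduce the corollary directly from Proposition \ref{prop: link L_n} by reading off the constraint that the slope $(1,1)$ imposes on the family $L_n$. The hypothesis that the $(1,1)$-surgery on $L$ is an $L$-space means precisely that $(1,1)\in\mathcal{L}(L)$, and since both coordinates are finite we have $(1,1)\in\mathcal{L}(L)\cap\matQ^2$, so this set is nonempty. In particular the third alternative of Proposition \ref{prop: link L_n} is ruled out, and therefore $L$ must be isotopic, as an unoriented link, either to some $L_n$ or to the mirror of some $L_n$.

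I would then treat the two surviving cases separately. If $L$ is isotopic to $L_n$, then $\mathcal{L}(L)\cap\matQ^2=\big([n,\infty)\times[n,\infty)\big)\cap\matQ^2$, and membership of $(1,1)$ forces $n\le 1$. Since the family $\{L_n\}$ is indexed by the integers $n\ge 1$ (these being the values for which $L_n$ appears as a fibered hyperbolic two-bridge link, with $L_1$ the Whitehead link), this yields $n=1$, i.e.\ $L$ is the Whitehead link. If instead $L$ is isotopic to the mirror of $L_n$, then $\mathcal{L}(L)\cap\matQ^2=\big((-\infty,-n]\times(-\infty,-n]\big)\cap\matQ^2$, and membership of $(1,1)$ forces $-n\ge 1$, i.e.\ $n\le -1$; this contradicts $n\ge 1$, so this case cannot occur. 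Combining the two cases, $L$ is isotopic to the Whitehead link, as claimed.

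The only genuinely delicate point is the bookkeeping around chirality: one must check that the unique slope-and-index combination compatible with $(1,1)$ singles out exactly one link. Here the asymmetry of the regions in Proposition \ref{prop: link L_n} does all the work, since the $L_n$ produce a region in the positive quadrant while their mirrors produce one in the negative quadrant; as $(1,1)$ lies in the positive quadrant, the mirror family is discarded immediately. The remaining verification is simply that the lower bound $n\ge 1$ on the index, combined with $n\le 1$, pins down $n=1$, and by the observation preceding the corollary $L_1$ is the Whitehead link, so there is nothing further to prove.
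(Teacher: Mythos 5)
Your argument is correct and is exactly the deduction the paper intends: the corollary is stated as an immediate consequence of Proposition \ref{prop: link L_n}, and your case analysis (nonemptiness of $\mathcal{L}(L)\cap\matQ^2$ rules out the third alternative, $(1,1)\in[n,\infty)^2$ forces $n\le 1$ hence $n=1$, and the mirror case is impossible since $(1,1)$ lies in the positive quadrant) is precisely the intended reading. Nothing is missing.
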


\begin{figure}[]
    \centering
    \includegraphics[width=0.8\textwidth]{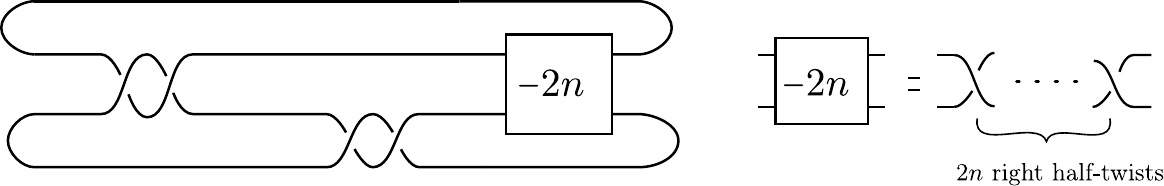}
    \caption{The link $L_n$.}
    \label{figure:link L_n}
\end{figure}

We observe that all the links $\{L_n\}_{n\geq 1}$ can be obtained as surgery on a $3$-component link, see Figure \ref{figure:Ln+L}. On the other hand we have the following:
\begin{prop}
It is not possible to obtain the exteriors of all the hyperbolic fibered two-bridge links as Dehn filling on a fixed cusped hyperbolic manifold $N$. In particular there exists no hyperbolic link $L$ such that every hyperbolic fibered two-bridge link is surgery on $L$.
\end{prop}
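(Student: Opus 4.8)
The plan is to argue by contradiction, using hyperbolic volume as the obstruction. Suppose there were a fixed cusped hyperbolic $3$-manifold $N$ such that the exterior of every hyperbolic fibered two-bridge link is obtained by Dehn filling some of the cusps of $N$. By Thurston's hyperbolic Dehn surgery theorem, Dehn filling never increases volume and strictly decreases it as soon as one cusp is filled along a nontrivial slope; in particular every hyperbolic manifold obtained from $N$ by Dehn filling has volume at most $\Vol(N)$. Hence, under this hypothesis, the volumes of all hyperbolic fibered two-bridge link exteriors would be uniformly bounded above by $\Vol(N)$. The whole statement therefore reduces to exhibiting hyperbolic fibered two-bridge links of arbitrarily large volume.

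To produce such links I would combine the classification of fibered two-bridge links with a lower bound for the volume of an alternating link in terms of its twist number. Two-bridge links are alternating, and for a prime alternating diagram with $t$ twist regions the volume of the complement is bounded below by a linear function of $t$ (Lackenby, with the sharp constant due to Agol--Storm--Thurston, and the two-bridge case of Futer--Kalfagianni--Purcell). The twist number of $\mathfrak b(p,q)$ is read off from the length of its reduced continued fraction expansion, so it suffices to find fibered two-bridge links whose expansions have arbitrarily many terms. Here it is essential that the \emph{number} of twist regions grows, rather than the number of crossings inside a fixed set of regions: this is precisely why the family $\{L_n\}$ of Proposition \ref{prop: link L_n} does not already contradict the statement. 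The links $L_n$ differ only by adding twists within a bounded number of twist regions, and are all Dehn fillings of the single three-component link of Figure \ref{figure:Ln+L}, so by Thurston's theorem their volumes converge to that complement and remain bounded. The fibered hyperbolic two-bridge links are not exhausted by the $L_n$ and their mirrors, and among the remaining ones there are links with unboundedly many twist regions, yielding the desired volume blow-up.

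With both ingredients in place the argument closes at once: the uniform bound $\Vol(\,\cdot\,)\le \Vol(N)$ forced by the filling hypothesis contradicts the existence of fibered hyperbolic two-bridge links of arbitrarily large volume, proving the first assertion. For the ``in particular'' statement, if there were a hyperbolic link $L$ such that every hyperbolic fibered two-bridge link is a surgery on $L$, then $N:=S^3\setminus L$ would be a fixed cusped hyperbolic manifold of exactly the forbidden type, so no such $L$ can exist. The step I expect to be the main obstacle is the middle one: rigorously producing a sequence of fibered hyperbolic two-bridge links whose twist numbers tend to infinity, since one must read off fiberedness and the number of twist regions simultaneously from the continued fraction and ensure the diagrams are reduced and prime so that the alternating volume bound applies. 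Once an explicit such sequence is pinned down (for instance a suitable family of plumbings of Hopf bands realized as two-bridge links), the volume estimate and the resulting contradiction are routine.
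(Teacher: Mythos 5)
Your proposal is correct and follows essentially the same route as the paper: Thurston's volume decrease under hyperbolic Dehn filling combined with Lackenby's volume lower bound for alternating links in terms of twist number, applied to a family of fibered hyperbolic two-bridge links with unboundedly many twist regions. The explicit family you left unspecified is simply $L(2,2,\dots,2)$ with an odd number of terms, which the fiberedness and hyperbolicity criteria recalled in Section 2.1 (all $|b_i|=1$, with two consecutive $b_i$ equal) certify immediately, and whose twist number is the length of the expansion.
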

\begin{proof}
By using the main result of \cite{Lackenby}, it follows that there exists a family of fibered hyperbolic two-bridge links whose volumes grow to infinity (this is the family of links associated to $L(a_1, \dots, a_n)=L(2, 2, \dots, 2$) in the notation introduced in Section \ref{sec: L-spaces}). Volume decreases under hyperbolic Dehn filling \cite{Thurston}, hence we obtain the thesis.
\end{proof}

As two-bridge links have tunnel number one, all surgeries on these links have at most Heegaard genus two. It has been proven by Li in \cite{L1} that if a closed orientable irreducible three manifold with Heegaard genus two has left-orderable fundamental group, then it admits a co-orientable taut foliation. As a consequence of this result together with Proposition \ref{prop: link L_n} we have:

\begin{cor}
Let $M$ be obtained as $(r_1, r_2)$-surgery on the link $L_n$, with $(r_1, r_2)\in [n, +\infty)\times [n, +\infty)$ and suppose that $M$ is irreducible. Then $M$ is not left-orderable.
\end{cor}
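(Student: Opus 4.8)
The plan is to combine Proposition \ref{prop: link L_n}, Theorem \ref{thm: main theorem} and Li's theorem \cite{L1} into a single contradiction argument. First I would record what the first two results say about $M$: since $L_n$ is a fibered hyperbolic two-bridge link and $(r_1,r_2)$ is a pair of finite rational slopes with $r_1,r_2\ge n$, Proposition \ref{prop: link L_n} gives $(r_1,r_2)\in\mathcal{L}(L_n)\cap\matQ^2$, so $M$ is an $L$-space. Both coefficients being finite, $M$ is a rational homology sphere, so the cited statements genuinely apply. Taking the contrapositive of Theorem \ref{thm: main theorem}, a surgery on $L_n$ that is an $L$-space admits no coorientable taut foliation; hence $M$ carries no coorientable taut foliation.

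Next I would assume, toward a contradiction, that $\pi_1(M)$ is left orderable, and exploit the bound on the Heegaard genus. Because two-bridge links have tunnel number one, $M$ has Heegaard genus at most two, and by hypothesis $M$ is closed, orientable and irreducible. I would then split on the genus. If it is at most one, irreducibility forces $M$ to be $S^3$ or a lens space, whose fundamental group is finite; a finite group is not left orderable (a left orderable group is torsion free and nontrivial), contradicting the assumption. If the genus equals two, I would invoke Li's theorem verbatim: a closed orientable irreducible three-manifold of Heegaard genus two with left orderable fundamental group supports a coorientable taut foliation. This contradicts the conclusion of the previous paragraph. In either case we reach a contradiction, so $\pi_1(M)$ cannot be left orderable, which is the assertion.

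The argument is essentially an assembly of results already available, so the only genuine obstacle is bookkeeping at the boundary of Li's hypotheses: his statement is for Heegaard genus \emph{exactly} two, whereas surgeries on $L_n$ only guarantee genus \emph{at most} two. The care-requiring step is therefore disposing of the low-genus strata correctly, which is precisely why the irreducibility assumption is indispensable: it rules out the reducible genus-one manifold $S^2\times S^1$, whose fundamental group $\matZ$ \emph{is} left orderable and would otherwise break the dichotomy, while for the remaining $S^3$ and lens spaces finiteness of $\pi_1$ finishes the case immediately.
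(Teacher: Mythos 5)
Your argument is correct and is essentially the paper's intended one: the paper derives this corollary directly from Proposition \ref{prop: link L_n} (which shows $M$ is an $L$-space, hence by Theorem \ref{thm: main theorem} carries no coorientable taut foliation) combined with Li's genus-two result, exactly as you do, and your explicit disposal of the genus $\le 1$ cases is a sensible bit of extra care. One small imprecision: finiteness of both coefficients alone does not force $M$ to be a rational homology sphere (that fails when $r_1r_2=\lk(L_n)^2$), but here $r_1r_2\ge n^2>(n-1)^2=\lk(L_n)^2$ — or simply the fact that $M$ is an $L$-space — supplies it, so nothing breaks.
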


\textbf{Applications to satellites on knots and links.}
We briefly recall the satellite operation. Suppose that $P$ is a knot inside the standard solid torus $V=\matD^2\times S^1$ and assume that $P$ is not contained in a $3$-ball of $V$. Let $K$ be a knot in $S^3$ and let ($\mu_K$, $\lambda_K$) be a meridian-longitude basis of $K$. Consider the orientation preserving diffeomorphism $\phi$ between $V$ and a tubular neighbourhood of $K$ mapping, as oriented curves, the meridian $\mu_V$ and longitude $\lambda_V$ of $V$ to $\mu_K$ and $\lambda_K$ respectively.
The image of $P$ under $\phi$ is a knot $S$ in $S^3$, called a \emph{satellite} of $K$. The knot $K$ is called the \emph{companion} of $S$ and the knot $P$ is called the \emph{pattern} of $S$.

Let $L$ be a fibered hyperbolic two-bridge link, let denote by $\mathcal{K}$ one of its component and orient it arbitrarily. Since two-bridge links have unknotted components, the exterior of $\mathcal{K}$ in $S^3$ is a solid torus $V$ and we can use the other component as pattern $P$ for producing satellite knots. 
We also fix a meridian-longitude basis for $V$ given by $(\mu_V, \lambda_V)=(\lambda_{\mathcal{K}}, \mu_{\mathcal{K}})$, where $\mu_{\mathcal{K}}$ and $\lambda_{\mathcal{K}}$ are the canonical meridian and longitude of $\mathcal{K}$. 

\begin{example}
If $L$ is the Whitehead link we obtain the Whitehead pattern. This is the pattern used to define Whitehead doubles of knots, see Figure \ref{figure:Whitehead pattern}. 
\end{example}

We now define an operation, that we call \emph{two-bridge replacement}, that generalises Whitehead doubling.
\begin{defn}
Let $K$ be a knot in $S^3$ and let $L=\mathcal{K}\sqcup P$ be a fibered hyperbolic two-bridge link. A knot $K'$ in $S^3$ is a \emph{two-bridge replacement} of $K$ if $K'$ is a satellite knot of $K$ with pattern $P$. 
More generally, if $\mathcal{L}=K_1\sqcup\cdots \sqcup K_d$ is a link with $d$ components, we say that $\mathcal{L}'=K_1'\sqcup \cdots \sqcup K_d'$ is a \emph{two-bridge replacement} of $\mathcal{L}$ if each knot $K_i'$ is a two-bridge replacement of $K_i$, for $i=1, \dots, d$.
\end{defn}

Notice that in a two-bridge replacement of a link $\mathcal{L}$ it is allowed to act on different components on $\mathcal{L}$ by two-bridge replacement with different two-bridge links.
We also remark that in the definition of two-bridge replacement we ask $L$ to be fibered and hyperbolic.

\begin{figure}[]
    \centering
    \includegraphics[width=0.3\textwidth]{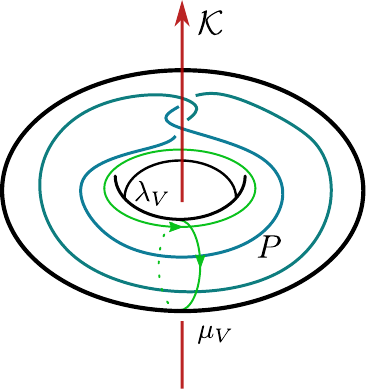}
    \caption{The (positive clasped) Whitehead pattern. The meridian $\mu_V$ is given by the longitude of the knot $K_0$ and the longitude $\lambda_V$ by its meridian. By considering the mirror of the Whitehead link one obtains the negative clasped Whitehead pattern.}
    \label{figure:Whitehead pattern}
\end{figure}

Recall that the genus of a link $\mathcal{L}$ is the minimal genus of a connected Seifert surface for $\mathcal{L}$, and that if $\mathcal{L}$ is fibered (see Definition \ref{def: fibered link}) then a Seifert surface has minimal genus if and only if it is a fibered surface \cite[Chapter~1.4]{EN}. 
The proofs of Theorem \ref{thm: main theorem} and of the main theorem of \cite{S}, together with results from \cite{KR1} and \cite{LR}, imply the following theorem. 
\begin{teo}
Let $\mathcal{L}$ be any non-trivial knot, or any fibered link with positive genus and let $\mathcal{L}'$ denote a two-bridge replacement of $\mathcal{L}$. Then all manifolds obtained by doing surgery on each component of $\mathcal{L}'$ along a non-meridional slope support a co-orientable taut foliation.
\end{teo}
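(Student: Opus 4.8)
The plan is to exploit the satellite decomposition of the surgered manifold, cutting it along the companion tori into a piece coming from $\mathcal{L}$ and one piece from each two-bridge link, building coorientable taut foliations on the pieces that realize whole intervals of boundary slopes, and gluing them along a common slope on each torus. It suffices to treat one component, so write $\mathcal{L}=K$ and let $L=K_0\cup P$ be the fibered hyperbolic two-bridge link used for the replacement, with $V=E(K_0)$ the solid torus and $P$ the pattern. Then the exterior of $\mathcal{L}'$ is $E(K)\cup_T E(L)$, where $T=\partial\nu(K)$ is glued to $\partial V$ by the map $\phi$ with $(\mu_V,\lambda_V)=(\lambda_{K_0},\mu_{K_0})$, so that $\lambda_{K_0}\mapsto\mu_K$ and $\mu_{K_0}\mapsto\lambda_K$. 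A non-meridional surgery on $\mathcal{L}'$ is exactly a Dehn filling of the $P$-boundary of $E(L)$ along a slope $\gamma\neq\mu_P$; calling $W_\gamma$ the resulting one-cusped manifold, the surgered manifold is $M=E(K)\cup_T W_\gamma$, and I want a coorientable taut foliation on $M$.

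\emph{The two-bridge pieces.} The taut foliations built in the proof of Theorem~\ref{thm: main theorem}, which come from the fibration of $E(L)$ via the spinning construction of \cite{KR1}, realize on the two boundary tori of $E(L)$ every pair of slopes that lies outside the $L$-space region and is distinct from the fiber slopes. Filling the $P$-boundary along $\gamma$ therefore produces coorientable taut foliations on $W_\gamma$ meeting $T$ in every slope $r$ for which the closed filling $(r,\gamma)$ of $L$ is not an $L$-space and $r$ is not the fiber slope. Feeding Proposition~\ref{prop: link L_n} through the reciprocation of slopes induced by $(\mu_V,\lambda_V)=(\lambda_{K_0},\mu_{K_0})$, this realizable set is an interval of slopes on $T$ which, because $\gamma$ is non-meridional, is always nonempty; moreover when $L$ is (the mirror of) an $L_n$ and $\gamma$ lies in the $L$-space region, the interval accumulates onto the longitude slope of $K$ from one side but does not contain it.

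\emph{The companion piece and matching.} On the other side I realize a matching slope. If $\mathcal{L}$ is a fibered link of positive genus, \cite{KR1} applied to its fibration gives coorientable taut foliations on $E(\mathcal{L})$ realizing every boundary slope except the fiber (longitude) slope. If instead $\mathcal{L}$ is an arbitrary non-trivial knot, the realization result of \cite{LR} gives coorientable taut foliations on $E(K)$ realizing every slope in an open interval about the longitude. In both cases $E(\mathcal{L})$ realizes slopes arbitrarily close to the longitude on both sides, so its realizable range overlaps the interval produced on the two-bridge side; this is precisely why a single foliation (Gabai's, realizing only the longitude) does not suffice and the one-parameter families are needed.

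\emph{Gluing and the main difficulty.} Choosing on $E(\mathcal{L})$ and on $W_\gamma$ foliations tangent to $T$ along parallel curves of a common slope with compatible coorientations, the standard torus-gluing theorem for taut foliations assembles them into a coorientable taut foliation of $M$; carrying this out simultaneously over all companion tori settles the multi-component case. The main obstacle is the slope bookkeeping of the two middle steps: one must check, for every non-meridional $\gamma$ and every fibered hyperbolic two-bridge link (in particular for the mirrors of the $L_n$, whose $L$-space regions force the two-bridge side to omit the longitude slope itself), that the interval realized by $W_\gamma$ genuinely meets the interval realized by the companion. This is exactly where Proposition~\ref{prop: link L_n} and the non-meridional hypothesis enter in an indispensable way.
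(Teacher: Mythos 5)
Your decomposition, your choice of ingredients (\cite{LR} for a non-trivial knot companion, \cite{KR1} for a positive-genus fibered link, the foliations from the proof of Theorem \ref{thm: main theorem} on the two-bridge piece, matching of boundary slopes along the companion torus) and your gluing step are exactly those of the paper's proof. The problem is that you explicitly identify the one non-formal step --- checking that the set of slopes realised on the $K_0$-boundary of $E_L$ actually meets the interval realised by $E(K)$ --- call it ``the main obstacle'', and then do not carry it out. That check is essentially the entire content of the proof beyond the obvious strategy, so as written the argument has a genuine gap.

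Concretely, what is missing is the slope bookkeeping through the gluing map $\phi$. Since $\phi(\lambda_{K_0})=\mu_K$ and $\phi(\mu_{K_0})=l\mu_K+\lambda_K$ for some $l\in\matZ$, the slope $\frac{p}{q}$ on $K$ corresponds to the slope $(\frac{p}{q}-l)^{-1}$ on $K_0$; in particular the longitude $\lambda_K$ (slope $0$) corresponds to the slope $-\frac{1}{l}$ on $K_0$, which for $l=0$ is the \emph{meridian} of $K_0$ --- the one slope the two-bridge side can never realise. So the statement you actually need, and which the paper extracts from the proofs of Theorem \ref{thm: main theorem} and of \cite[Theorem~1.1]{S}, is: for every $l\in\matZ$ and every neighbourhood $U$ of $-\frac{1}{l}$ in $\overline{\matQ}$ there is a single slope $r\in U$ such that for \emph{every} non-meridional slope $r'$ on $P$ the exterior $E_L$ carries a taut foliation meeting the boundary tori in slopes $r$ and $r'$. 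Verifying this requires going back to Proposition \ref{prop: link L_n} and the realised multislope regions: for a generic fibered hyperbolic two-bridge link all finite multislopes are realised and any finite $r\in U$ works, while for $L_n$ (or its mirror) one must take $r<n$ (resp.\ $r>-n$) in the canonical framing and check that every neighbourhood of $-\frac{1}{l}\in[-1,1]\cup\{\infty\}$ contains such a slope, which holds because $n\geq 1$. Only after this does the overlap with the interval $(-a,b)$ about the longitude of $K$ from \cite{LR} (or the neighbourhood of $0\in\overline{\matQ}^d$ from \cite{KR1}) follow. Your assertion that the realised interval ``accumulates onto the longitude slope of $K$'' is the conclusion of this computation, not a substitute for it.
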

\begin{proof}
\begin{itemize}[leftmargin=*]
\item We first analyse the case where $\mathcal{L}$ is a non-trivial knot. We denote it by $K$ and we denote by $K'$ its two-bridge replacement. We use the notation introduced above, and so we denote by $L=\mathcal{K}\sqcup P$ the fibered hyperbolic two-bridge link used in the definition of two-bridge replacement, and $V$ is the exterior of $\mathcal{K}$ in $S^3$. Recall that $V$ is a solid torus. We also denote by  $E_{K'}$, $E_K$ and $E_L$ the exteriors in $S^3$ of $K', K$ and $L$ respectively. Let $\phi$ be the map from $V$ to a tubular neighbourhood of $K$ given by the definition of the satellite operation.
Our aim is to prove that all non-trivial surgeries on $K'$ support a co-orientable taut foliation. The key observation is that $E_{K'}$ is obtained by gluing $E_K$ and $E_L$. More precisely we have
$$
E_{K'}\cong E_K\cup_\varphi E_L
$$
where $\varphi$ is the restriction of $\phi$ to $\partial V\subset \partial E_L$.

In order to construct foliations on $E_{K'}$ we can therefore reduce ourselves to construct foliations on $E_K$ and $E_L$ and to study the gluing map $\varphi$.
We fix the canonical meridian-longitude basis $(\mu_K, \lambda_K)$ for the knot $K$ and we use it to identify slopes on $K$ with $\overline{\matQ}$. By definition $\phi$
maps the meridian $\mu_V$ of $V$ to $\mu_K$ and the longitude $\lambda_V$ of $V$ to some longitude of $K$, i.e.
\begin{align*}
    &\phi(\lambda_{\mathcal{K}})=\phi(\mu_V)=\mu_K\\
    &\phi(\mu_{\mathcal{K}})=\phi(\lambda_V)=l\mu_K+\lambda_K
\end{align*}
for some integer $l\in \matZ$.
Given two coprime integers $p,q$ the map $\phi$ satisfies
$$
p\mu_K+q\lambda_K=\phi((p-ql)\lambda_{\mathcal{K}}+q\mu_{\mathcal{K}})
$$
and therefore its restriction $\varphi$ acts on the slopes by identifying the slope $\frac{p}{q}$ on $K$ with the slope $(\frac{p}{q}-l)^{-1}$ on $\mathcal{K}$.
Since $K$ is a non-trivial knot, it follows by \cite[Theorem~1.1]{LR} that there exists an interval $(-a, b)$, where $a,b>0$, such that for every slope $s\in (-a, b)$ there exists a co-orientable taut foliation on $E_K$ intersecting the boundary torus in a collection of circles of slope $s$. 
The slope $0$ on $K$ corresponds, via the identification given by $\varphi$, to the slope $-\frac{1}{l}$ on $\mathcal{K}$. Hence the interval $(-a,b)$ is identified with a neighbourhood $U_1$ of $-\frac{1}{l}\subset \overline{\matQ}$.  The proof of \cite[Theorem~1.1]{S} when $L$ is the Whitehead link and the proof of Theorem \ref{thm: main theorem} when $L$ is any other fibered hyperbolic two-bridge link imply the following fact: for every integer $l\in \matZ$ and every neighbourhood $U$ of $-\frac{1}{l}\subset \overline{\matQ}$ there exists a slope $r\in U$ such that for every non-meridional slope $r'$ on $P$ there is a co-orientable taut foliation on $E_L$ intersecting the boundary tori in circles of slopes $r$ and $r'$ respectively. 

By choosing a slope $ r\in U_1$ guaranteed by the previous observation we are able to find for each non-meridional slope $r'$ in $E_{K'}$ taut foliations $\mathcal{F}$ on $E_K$ and $\mathcal{F}'$ on $E_L$ that can be glued along $\varphi$ to define a co-orientable taut foliation in $E_{K'}$ intersecting the boundary in parallel curves of slope $r'$. By capping off with meridional discs, these foliations extend to the surgeries on $K'$.
\item When $\mathcal{L}=K_1\sqcup \dots \sqcup K_d$ is a fibered link with multiple components and positive genus we can proceed in an analogous way. Let $S$ denote the fiber surface for $\mathcal{L}$. By intersecting $S$ with the boundaries of tubular neighbourhoods of the knots $K_1,\dots, K_d$ we obtain longitudes $\lambda^S_1,\dots \lambda^S_d$. We use them to define meridian-longitude bases for the components of $\mathcal{L}$ and to identify slopes on the exterior of $\mathcal{L}$ with $\overline{\matQ}^d$.
It follows by \cite[Theorem~1.1]{KR1} that for every multislope $(r_1,\dots, r_d)$ in a neighbourhood of $0\in \overline{\matQ}^d$ there exists a co-orientable taut foliation in the exterior of $\mathcal{L}$ intersecting the boundary tori in parallel curves of slopes $r_1,\dots, r_d$ respectively. The statement now follows by applying to each component of $\mathcal{L}$ the same reasoning as in the previous case, where we never made use of the fact that $\lambda_K$ was the canonical longitude of $K$.
\end{itemize}
This concludes the proof.
\end{proof}

Two-bridge replacement generalises Whitehead doubling and we emphasise the following corollary.

\begin{cor}
Let $K$ be a non-trivial knot and let $K'$ be any Whitehead double of $K$. Then all non-trivial surgeries on $K'$ support a co-orientable taut foliation.\qed
\end{cor}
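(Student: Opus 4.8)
The plan is to obtain this corollary as the special case of the preceding Theorem in which $\mathcal{L}$ is a single non-trivial knot and the fibered hyperbolic two-bridge link governing the replacement is the Whitehead link. First I would recall, as observed in the Remark preceding Figure \ref{figure:Whitehead pattern}, that two-bridge replacement along the Whitehead link is exactly Whitehead doubling; the Whitehead link is fibered hyperbolic two-bridge, being the link $L_1$ of Proposition \ref{prop: link L_n}, so it is an admissible input for the construction, and $K$ being non-trivial meets the hypothesis of the Theorem.

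Next I would account for the word \emph{any}. A Whitehead double of $K$ is specified by two discrete data: the sign of its clasp and the number of twists in the pattern. The clasp sign corresponds to the choice between the Whitehead link and its mirror---both fibered hyperbolic two-bridge links, as recorded in the caption of Figure \ref{figure:Whitehead pattern}---so both are permissible. The number of twists corresponds precisely to the integer $l$ appearing in the gluing data $\phi(\lambda_V)=l\mu_K+\lambda_K$ in the proof of the preceding Theorem, a parameter which that proof treats uniformly for every $l\in\matZ$. Consequently every Whitehead double $K'$ of $K$ arises as a two-bridge replacement on $K$ using either the Whitehead link or its mirror, for a suitable value of $l$.

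Finally I would reconcile the terminology: a surgery on the knot $K'$ is trivial exactly when it is performed along the meridian, since this is the only slope returning $S^3$. Hence the non-trivial surgeries on $K'$ are precisely the non-meridional ones, and applying the preceding Theorem to $K'$ furnishes a coorientable taut foliation on each of them. I do not anticipate a substantive obstacle, as the statement is a direct specialization of the Theorem; the only point requiring care is the bookkeeping in the second step, namely verifying that the clasp sign together with the twisting parameter $l$ genuinely exhausts all Whitehead doubles, so that the phrase \emph{any Whitehead double} is fully covered.
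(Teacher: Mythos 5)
Your proposal is correct and matches the paper's (implicit) treatment: the corollary is obtained as the direct specialization of the preceding theorem to the case where $\mathcal{L}$ is a non-trivial knot and the two-bridge replacement is performed with the Whitehead link or its mirror, exactly as you describe. Your extra bookkeeping — that the clasp sign corresponds to the choice between the Whitehead link and its mirror and the twisting to the integer $l$, which the theorem's proof handles uniformly — is the right way to justify the word \emph{any}, and the identification of non-trivial with non-meridional surgeries is as intended.
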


\textbf{Structure of the paper.} In Section \ref{sec: two-bridge background} we recall some notions on two-bridge links and describe some properties of fibered two-bridge links that will be used in the subsequent sections.
Section \ref{sec:taut foliations} is devoted to the construction of taut foliations and will take most of the paper. In Section \ref{subsec: background} we introduce branched surfaces and recall some of their basic properties, together with the main result of \cite{L2}. In Section \ref{subsec: constructing branched surfaces} we recall a general method of constructing branched surfaces in fibered manifolds with boundary. In Section \ref{subsec: boundary train tracks} we briefly discuss the boundary train tracks of these branched surfaces and from Section \ref{subsec: Fibered hyp two-bridge links} we start focusing our attention on surgeries on fibered hyperbolic two-bridge links: we subdivide them in four families that we analyse in Sections \ref{subsec: family 1}--\ref{subsec: family 4}.
In Section \ref{sec: L-spaces}  we recall part of the main result of \cite{RR} and then we use it to study the $L$-space surgeries on the links $\{L_n\}_{n\geq 1}$. 
\newline

\textbf{Acknowledgments.} I warmly thank my advisors Bruno Martelli and Paolo Lisca for their support and for their useful comments on a first draft of the paper. I also thank the anonymous referees for their comments and suggestions, which improved the clarity and overall organization of the paper. The author is supported by the FWF project P 34318 ``Cut and Paste Methods
in Low Dimensional Topology''.

\section{Background on two-bridge links}\label{sec: two-bridge background}

In this section we briefly recall the definition of fibered link and some facts about two-bridge links that we will often use in the paper. We refer to \cite{BZ} for proofs and details regarding two-bridge links.

\begin{figure}[]
    \centering

    \includegraphics[width=0.75\textwidth]{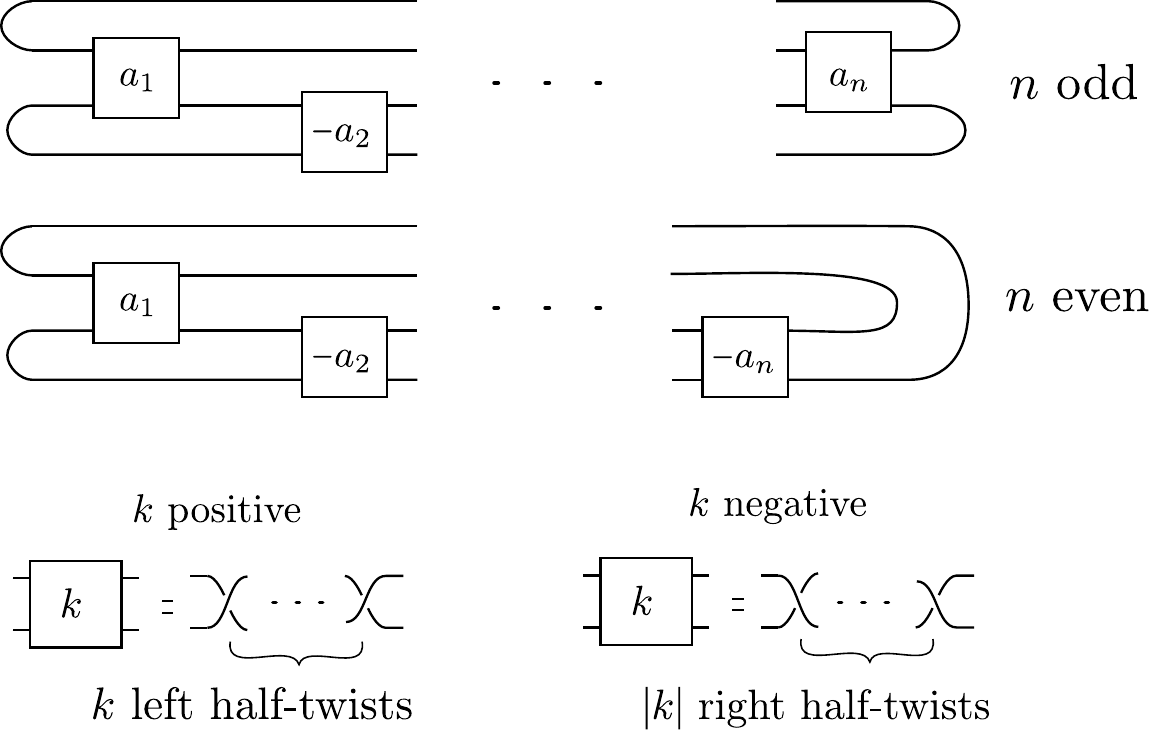}
    \caption{The two-bridge knot or link $L(a_1,\dots, a_n)$.}
    \label{figure:two bridge}
\end{figure}

A \emph{two-bridge link} can be described by a rational number $\frac{p}{q}$, where $p$ and $q$ are coprime integers, $p>0$, $q$ is odd and $0<|q|<p$, in the following way. We fix a sequence of integers $(a_1,\dots,a_n)$ such that
\begin{equation*}\label{eq:continued fraction}
\frac{p}{q}=a_1+\cfrac{1}{a_2+\cfrac{1}{\ddots+\cfrac{1}{a_n}}}                 \tag{*}
\end{equation*}
and consider the link defined by the diagram in Figure \ref{figure:two bridge}. We denote this link by $L(a_1,\dots, a_n)$.

We are interested in the case when $L(a_1,\dots, a_n)$ has two components. This happens exactly when the fraction $\frac{p}{q}$ has numerator $p$ even \cite[Proposition~12.3]{BZ}. 
When $L(a_1,\dots, a_n)$ is a link we orient the components as in Figure \ref{fig: two bridge ori}.

\begin{figure}
    \centering
    \includegraphics[width=0.8\textwidth]{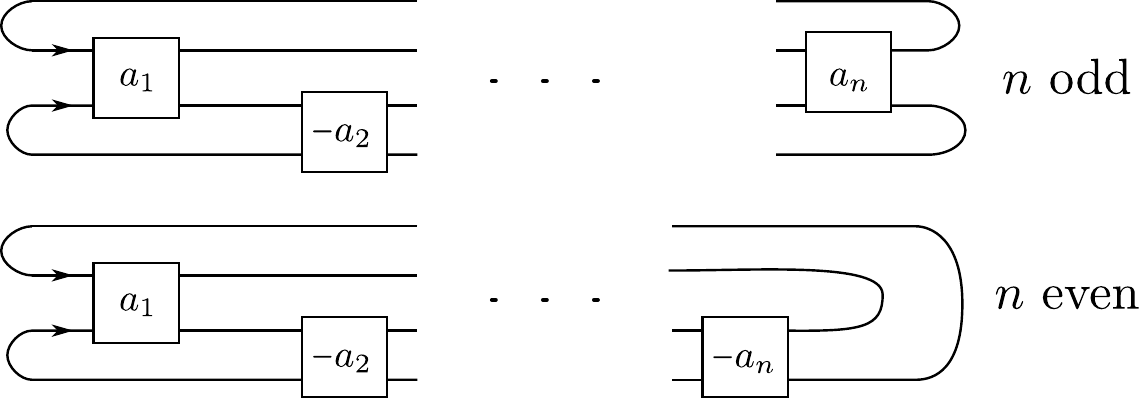}
    \caption{The oriented two-bridge link $L(a_1,\dots, a_n)$.}
    \label{fig: two bridge ori}
\end{figure}

A priori it could happen that the isotopy class of the two-bridge link associated to $\frac{p}{q}$ depends on the choice of the continued fraction representation of $\frac{p}{q}$. This is not the case, by the following theorem.

\begin{teo}[\cite{Schubert}, see also \cite{BZ}]\label{teo: classification two-bridge links}
Let $L=L(a_1\dots,a_n)$ and $L'=L(b_1,\dots,b_m)$ be two oriented two-bridge links and let $\frac{p}{q}$ and $\frac{p'}{q'}$ be the rational numbers defined as in \eqref{eq:continued fraction}. Then the links $L$ and $L'$ are isotopic if and only if $p=p'$ and $q'\equiv q^{\pm 1 } \mod 2p$. If $p=p'$ and $q'\equiv q+p \mod 2p$ or $qq'\equiv 1+p \mod 2p$,
then $L$ and $L'$ are isotopic after reversing the orientation of one of the components.
\end{teo}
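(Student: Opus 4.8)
The plan is to reduce Schubert's classification to the classification of lens spaces by means of the double branched cover. Write $\mathfrak{b}(p,q)$ for the two-bridge link associated to $\frac{p}{q}$, so that $L(a_1,\dots,a_n)=\mathfrak{b}(p,q)$. The starting point is the classical fact that the double cover of $S^3$ branched along $\mathfrak{b}(p,q)$ is the lens space $L(p,q)$. I would verify this directly from the rational-tangle description underlying Figure \ref{figure:two bridge}: decompose $S^3$ into the two trivial tangles glued along the four-punctured sphere, take the induced cover, and observe that the two solid tori of a genus-one Heegaard splitting of $L(p,q)$ are the covers of the two rational tangles, with gluing read off from the continued fraction \eqref{eq:continued fraction}. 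An ambient isotopy of $S^3$ carrying $\mathfrak{b}(p,q)$ to $\mathfrak{b}(p',q')$ is orientation preserving and lifts to an orientation-preserving homeomorphism of branched covers commuting with the two covering involutions; conversely an orientation-preserving equivariant homeomorphism of the covers descends to a homeomorphism of $S^3$ taking one link to the other, hence to an isotopy. Thus the problem splits into classifying the lens spaces and controlling the branched-covering involution.

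For the first part I would invoke the Reidemeister--Franz classification: $L(p,q)$ and $L(p',q')$ are orientation-preservingly homeomorphic if and only if $p=p'$ and $q'\equiv q^{\pm1}\pmod p$. The ``if'' direction is realized by the self-homeomorphism swapping the two Heegaard solid tori, which sends $q$ to $q^{-1}$; the ``only if'' direction is the standard computation of Reidemeister torsion, which distinguishes the residues $q^{\pm1}\bmod p$ among orientation-preserving classes. Because isotopies of $S^3$ are orientation preserving, the lifted homeomorphism of covers is orientation preserving, so one picks up only $q^{\pm1}$ and never $-q^{\pm1}$; this is exactly why mirror images, corresponding to $q\mapsto -q$ and to orientation-reversing homeomorphisms of $L(p,q)$, do not enter. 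This already yields the unoriented classification modulo $p$.

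The main obstacle is to upgrade the congruence from $\bmod\,p$ to $\bmod\,2p$ and to track the orientations of the two components. The lens space alone records only $q\bmod p$, whereas the oriented link records $q\bmod 2p$; the missing $\matZ/2$ is carried by the branched-covering involution together with the orientation data on its fixed-point set, which is a copy of the two-component link. I would first establish that the involution is unique up to conjugacy, so that a homeomorphism of lens spaces can genuinely be promoted to an equivariant one and hence to a link isotopy; for lens spaces this uniqueness follows from the equivariant geometrization of their involutions, or from a direct normal-form argument. I would then exhibit, at the level of the plumbing diagram, the symmetry realizing $q\mapsto q+p$ (which is the identity on $L(p,q)$ but nontrivial on the link) and check that it reverses the orientation of exactly one component, and analyze $q\mapsto q^{-1}$ analogously. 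Comparing the four lifts of $q^{\pm1}\bmod p$ to residues mod $2p$ then shows that $q'\equiv q^{\pm1}\pmod{2p}$ gives an orientation-preserving isotopy, while the complementary residues $q'\equiv q+p$ and $qq'\equiv 1+p\pmod{2p}$ force the reversal of one component, which is precisely the stated dichotomy.

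Alternatively one can bypass branched covers and argue combinatorially with Schubert normal forms, analyzing directly the two bridges and the braiding presenting $\mathfrak{b}(p,q)$; this is Schubert's original route and recovers the mod-$2p$ statement without three-manifold topology, at the cost of a more delicate bookkeeping of the overbridges. In either approach the genuinely nontrivial inputs are the torsion classification of lens spaces and the uniqueness of the branched involution; the orientation bookkeeping, though fiddly, is where the factor of two and the precise congruences $q'\equiv q+p$ and $qq'\equiv 1+p$ are pinned down.
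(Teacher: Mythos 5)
This theorem is not proved in the paper at all: it is Schubert's classical classification, quoted with references to \cite{Schubert} and \cite{BZ}, so there is no in-paper argument to compare against. Your branched-cover strategy is the standard modern route to this result and is sound in outline: the double cover of $S^3$ branched over $b(p,q)$ is $L(p,q)$, orientation-preserving homeomorphism of lens spaces is governed by $q'\equiv q^{\pm1}\pmod p$ via Reidemeister torsion, and the passage back down to the link requires knowing that the hyperelliptic involution on a lens space is unique up to conjugacy. You correctly flag that last point, but it deserves emphasis: it is a genuinely hard theorem (Hodgson--Rubinstein on involutions of lens spaces, or equivariant geometrization), not something one gets from ``a direct normal-form argument,'' and without it the ``only if'' direction collapses, since a homeomorphism of the covers need not be equivariant. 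Schubert's original proof is combinatorial precisely because this input was unavailable.

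The more substantive gap is that the part of the statement that actually carries content beyond the unoriented $\bmod\ p$ classification --- the $\bmod\ 2p$ refinement and the identification of which residues force reversal of one component --- is only promised (``I would then exhibit\dots and check\dots''), not executed. The lens space together with its involution sees only the unoriented link, so the extra $\matZ/2$ must be extracted from orientation data on the two components of the branch locus, and this is exactly the fiddly bookkeeping where the congruences $q'\equiv q+p$ and $qq'\equiv 1+p\pmod{2p}$ are pinned down. As written, your text is a correct and well-organized plan whose two load-bearing steps are either outsourced to deep theorems or deferred; it is not yet a proof. Since the paper itself treats the statement as a citation, the appropriate fix is either to do the same or to carry out the orientation analysis in full.
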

We denote by $b(p,q)$ the two-bridge link associated to the rational number $\frac{p}{q}$. We also recall, for later reference, the following fact \cite[Exercise~2.1.16]{Kaw}.

\begin{lemma}\label{lemma: symmetric}
Two-component two-bridge links are symmetric, i.e. there exists an ambient isotopy of $S^3$ interchanging their components.\qed
\end{lemma}
We are interested in studying fibered hyperbolic two-bridge links. For this reason we recall the definition of fibered link.

Given an oriented surface $S$ with boundary and an orientation-preserving homeomorphism $h:S\rightarrow S$ fixing $\partial S$ pointwise, we denote by $M_h$ the mapping torus of $h$
$$
M_h=\frac{S\times [0,1]}{(h(x),0)\sim (x,1)}.
$$
We orient $S\times [0,1]$ as a product and $M_h$ with the orientation induced by $S\times [0,1]$. We also identify $S$ with its image in $M_h$ via the map
\begin{align*}
S&\rightarrow S\times \{0\}\subset M_h\\
x&\mapsto (x,0). 
\end{align*}
The homeomorphism $h$ is called the \emph{monodromy} of $M_h$

\begin{defn}\label{def: fibered link}
Let $L$ be an oriented link in $S^3$. We say that $L$ is \emph{fibered} if there exists a Seifert surface $S$ for $L$, an orientation preserving homeomorphism $h$ of $S$ fixing $\partial S$ pointwise and an orientation preserving homeomorphism
$$
\chi: S^3\setminus {\rm int}(N_L) \rightarrow M_h,
$$
where $N_L$ denotes a tubular neighbourhood of $L$ in $S^3$, so that 
\begin{itemize}
    \item $\chi_{|S}$ is the inclusion $S\subset M_h$;
    \item $\chi(m_i)=\{x_i\}\times [0,1]$, where $m_i$ is a meridian for the $i$-th component of $L$ and $x_i\in \partial S$ is a point.
\end{itemize}
\end{defn}

In the case of fibered links, we refer to the homeomorphism $h$ as the \emph{monodromy of the link}.

The following lemma yields us an useful characterisation of fibered hyperbolic two-bridge links.

\begin{lemma}\label{lemma: characterisation of fib hyp}
Let be $L$ a two-bridge link with two components. Then $L$ is fibered if and only $L=L(2b_1,\dots, 2b_n)$ as an unoriented link, where $|b_i|=1$ for all $i$'s and $n$ is odd. Moreover $L$ is fibered and hyperbolic if and only if it admits such a description with $(b_1,\dots,b_n)\ne\pm(1, -1, 1 ,\dots, (-1)^{n-1})$. 
\end{lemma}
\begin{proof}
It follows from \cite[Exercise~2.1.14]{Kaw} that any two-bridge link $L$ with two components can be written as $L=L(2b_1,\dots, 2b_n)$, where $b_i$ is a non-zero integer and $n$ is odd. Moreover it follows from \cite[Proposition~2]{GK}\footnote{The proof presented there is for knots, but the same proof works also for links.} that $L$ is fibered if and only if we can find such a description with $|b_i|=1$ for all $i$. This proves the first part of the lemma.
Since two-bridge links are non-split, prime, alternating links (see \cite{BZ}), as a consequence of \cite[Corollary~2]{M}, a two-bridge link is hyperbolic if and only if is it not a torus link. The only non-trivial torus links that are two-components two-bridge links are the $(2m, 2)$ torus links, with $m$ a non-zero integer, and by using Theorem \ref{teo: classification two-bridge links} one can see that $L$ is a torus link if and only if any description of $L$ as $L(2b_1,\dots, 2b_n)$ with all $|b_i|=1$ satisfies $(b_1,\dots,b_n)=\pm(1, -1, 1 ,\dots, (-1)^{n-1})$.
\end{proof}

Suppose $L=L(2b_1,\dots, 2b_n)$ where $|b_i|=1$ for all $i$'s and $n=2k+1$ is odd. Then it is possible to draw an explicit fiber surface $S$ for $L$. This surface is obtained by starting with the boundary connected sum of $k$ Hopf bands, and then plumbing other $k+1$ Hopf bands to this surface. The sign of the Hopf bands is determined in a straightforward way from the coefficients $(b_1,\dots, b_n)$. One example is described in Figure \ref{fig:plumbing two bridge}.  We also fix an orientation of $S$, so that in the figure the positive side is coloured in pink, and this induces an orientation of the link.

\begin{figure}[]
    \centering
    \includegraphics[width=0.65\textwidth]{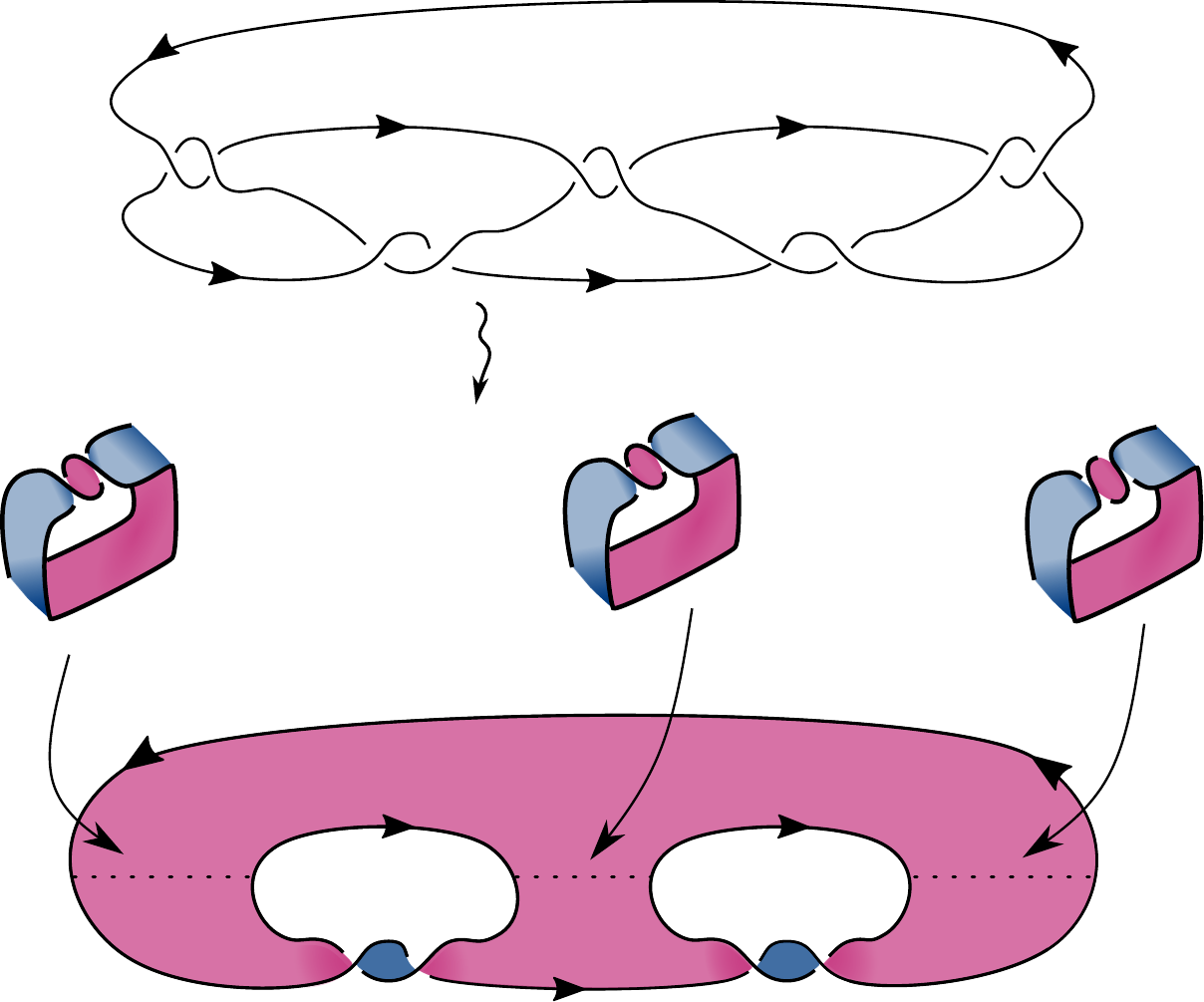}
    \caption{The fiber surface of the link $L(-2, -2, -2, 2, 2)$. The positive side is coloured in pink.}
    \label{fig:plumbing two bridge}
\end{figure}

From this very easy description of the fiber surface of $L$ we are able to determine the monodromy of $L$. More precisely, $S$ can be described in a more abstract way as in Figure \ref{fig:curves} and the monodromy is given by the diffeomorphism (to be read from right to left) 
\begin{equation*}\label{eq: monodromia}
    h=\tau_2^{\varepsilon_2}\tau_4^{\varepsilon_4}\dots\tau_{2k}^{\varepsilon_{2k}}\tau_1^{\varepsilon_1}\tau_3^{\varepsilon_3}\dots\tau_{2k+1}^{\varepsilon_{2k+1}}
    \tag{$\ast$}
\end{equation*}

where $\tau_i$ denotes the positive (i.e. the right) Dehn twist along the curve $\gamma_i$ shown in Figure \ref{fig:curves} and $$
\varepsilon_i=
\begin{cases}
-{\rm sgn}(b_i) &\text{ when $i$ is even}\\
{\rm sgn}(b_i) &\text{ when $i$ is odd}
\end{cases}.
$$
This follows from the fact that the monodromy of the boundary of a positive (resp. negative) Hopf band is a positive (resp. negative) Dehn twist along its core and from the way the monodromy of a plumbing or a boundary connected sum (or more generally a Murasugi sum), behaves with respect to the monodromies of the summands, see \cite[Corollary~1.4]{Gabai1}. 

\begin{figure}[]
    \centering
    \includegraphics[width=0.65\textwidth]{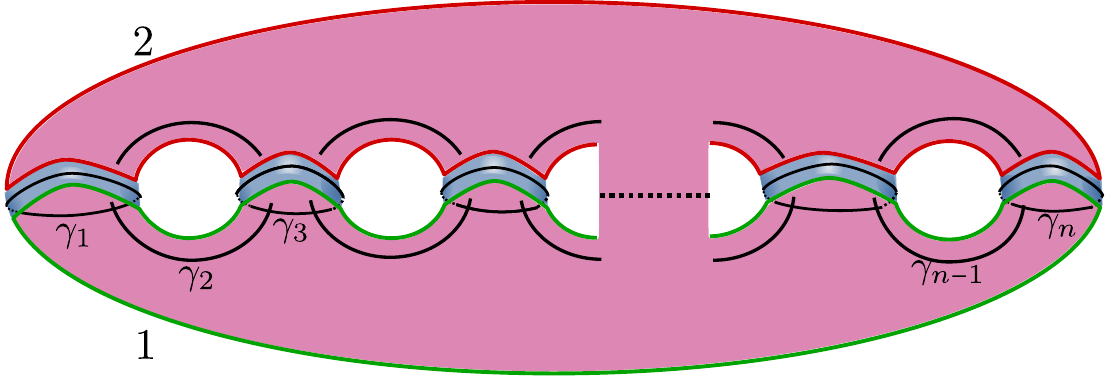}
    \caption{An abstract drawing of the fiber surface $S$ together with the curves $\gamma_i$'s. We have also coloured the two boundary components of $S$.}
    \label{fig:curves}
\end{figure}

To ease the exposition of some lemmas in the next section we fix the following notation. With reference to Figure \ref{fig:curves} we say that a Dehn twist along one of the curves $\gamma_1, \gamma_3,\dots, \gamma_{n}$ is a \emph{bridge twist}, and a Dehn twist along one of the curves $\gamma_2,\gamma_4,\dots, \gamma_{n-1}$ is a \emph{river twist}.

\section{Taut Foliations}\label{sec:taut foliations}

In this section we study the existence of taut foliations on the surgeries on fibered hyperbolic two-bridge links, proving the foliation part of
Theorem \ref{thm: main theorem}. Branched surfaces will be our main tool. In Subsection \ref{subsec: background} we introduce them and recall some of their basic properties, together with the main result of \cite{L2}. In Section \ref{subsec: constructing branched surfaces} we recall a general method to construct branched surfaces in fibered manifolds with boundary and in Section \ref{subsec: boundary train tracks} we discuss their boundary train tracks. In Section \ref{subsec: Fibered hyp two-bridge links} we focus our attention on surgeries on fibered hyperbolic two-bridge links: we subdivide them in four families with Lemma \ref{lemma:subdivision in families} and then study each of these families separately in Sections \ref{subsec: family 1}--\ref{subsec: family 4}.

\subsection{Background}\label{subsec: background}

In this and in the next sections we assume familiarity with the basic notions of the theory of train tracks; see \cite{PH} for reference. In the cases of our interest train tracks can also have bigons as complementary regions. 

We now recall some basic facts about branched surfaces. We refer to \cite{FO} and \cite{O} for more details.

\begin{defn}
A \emph{branched surface with boundary} in a $3$-manifold $M$ (possibly with boundary) is a closed subset $B\subset M$ that is locally diffeomorphic to one of the models in $\matR^3$ of Figure \ref{branched surface}a) or to one of the models in the closed half space of Figure \ref{branched surface}b), where $\partial B:= B\cap \partial M$ is represented with a bold line.
\begin{figure}[]
    \centering
    \includegraphics[width=0.8\textwidth]{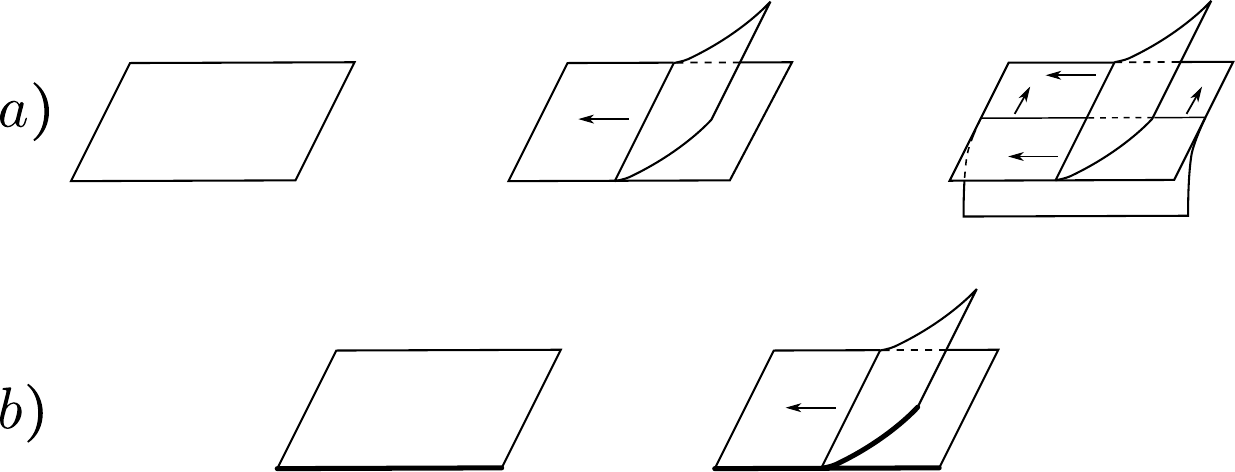}
    \caption{Local models for a branched surface, with cusp directions. The bolded regions lie in $\partial M$.}
    \label{branched surface}
\end{figure}

\end{defn}

Branched surfaces generalise the concept of train tracks from surfaces to $3$-manifolds. When the boundary of $B$ is non-empty it defines a train track $\partial B$ in $\partial M$.

If $B$ is a branched surface it is possible to identify two subsets of $B$: the \emph{branch locus} and the set of \emph{triple points}. The branch locus is defined as the set of points where $B$ is not locally homeomorphic to a surface. It is self-transverse and intersects itself in double points only. The set of triple points of $B$ can be defined as the points where the branch locus is not locally homeomorphic to an arc. For example, the rightmost model of Figure \ref{branched surface}a) contains a triple point.

The complement of the branch locus in $B$ is a union of connected surfaces. The abstract closures of these surfaces under any path metric on $M$ are called the \emph{branch sectors} of $B$. Analogously, the complement of the set of the triple points inside the branch locus is a union of $1$-dimensional connected manifolds. Moreover, to each of these manifolds we can associate an arrow in $B$ pointing in the direction of the smoothing, as in Figure \ref{branched surface}. We call these arrows \emph{cusp directions}.

In \cite{L}, Li introduces the notion of \emph{sink disc}.

\begin{defn}
Let $B$ be a branched surface in $M$ and let $S$ be a branch sector in $B$.
We say that $S$ is a \emph{sink disc} if $S$ is a disc (possibly with $S  \cap \partial M\neq \emptyset$) the branch direction of any smooth curve or arc in $\partial S\setminus \partial M$ points into $S$.
\end{defn}

In Figure \ref{discs}  some examples of sink discs are depicted. The bold lines represent the intersection of the branched surface with $\partial M$. Notice that the intersection $\partial S \cap \partial M$ can also be disconnected.

\begin{figure}[]
    \centering
    \includegraphics[width=1\textwidth]{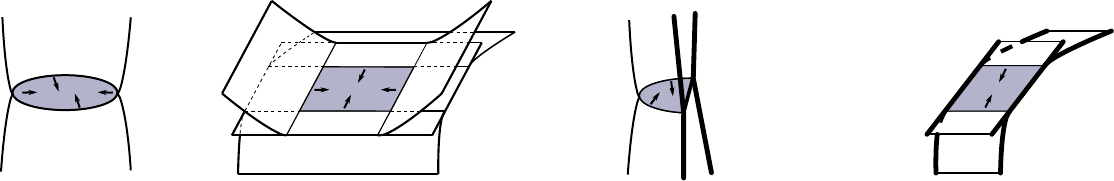}
    \caption{Examples of sink discs.}
    \label{discs}
\end{figure}

In \cite{L}, Li introduces the definition of laminar branched surface and in \cite{L2} he generalises this definition to branched surfaces with boundary as follows. For the definition of trivial bubble we refer the reader to \cite{L,L2}.

\begin{defn}[\cite{L, L2}]\label{laminar}
Let $B$ be a branched surface in a $3$-manifold $M$. We say that $B$ is \emph{laminar} if $B$ has no trivial bubbles and the following hold:

\begin{enumerate}
    \item $\partial_h N_B$ is incompressible and $\partial$-incompressible in $M\setminus {\rm int}(N_B)$, and no component of $\partial_h N_B$ is a sphere or a properly embedded disc in $M$;
    \item there is no monogon in $M\setminus {\rm int}(N_B)$, i.e. no disc $D\subset M\setminus {\rm int}(N_B)$ such that $\partial D=D\cap N_B=\alpha\cup \beta$, where $\alpha$ is in an interval fiber of $\partial_v N_B$ and $\beta$ is an arc in $\partial_h N_B$;
    \item $M\setminus {\rm int}(N_B)$ is irreducible and $\partial M\setminus {\rm int}(N_B)$ is incompressible in $M\setminus {\rm int}(N_B)$;
    \item $B$ contains no Reeb branched surfaces (see \cite{GO} for the definition);
    \item $B$ has no sink discs.
\end{enumerate}
\end{defn}

The key property of laminar branched surfaces is that they fully carry essential laminations\footnote{For the definition of essential laminations see \cite{GO}, but we will not need their properties for our purposes.} \cite{L} and when the ambient manifold $M$ has torus boundary, they can be used to construct essential laminations in fillings of $M$, as the following theorem shows.

\begin{teo}[\cite{L2}]\label{boundary train tracks}
Let $M$ be an irreducible and orientable $3$-manifold whose boundary is union of $k$ incompressible tori $T_1,\dots, T_k$. Suppose that $B$ is a laminar branched surface in $M$ such that $\partial M\setminus \partial B$ is a union of bigons. Then for any multislope $(s_1,\dots, s_k)\in \overline{\matQ}^k$ that is realised by the train track $\partial B$, if $B$ does not carry a torus that bounds a solid torus in $M(s_1,\dots,s_k)$, there exists an essential lamination $\Lambda$ in $M$ fully carried by $B$ that intersects $\partial M$ in parallel simple curves of multislope $(s_1,\dots, s_k)$. Moreover this lamination extends to an essential lamination of the filled manifold $M(s_1,\dots, s_k)$.
\end{teo}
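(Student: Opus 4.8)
The plan is to separate the statement into the two independent contents packaged into the word \emph{laminar}: conditions $(1)$--$(4)$ of Definition \ref{laminar} are the relative form of the Gabai--Oertel hypotheses and are responsible for \emph{essentiality}, while condition $(5)$, the absence of sink and half sink discs, is exactly what upgrades ``carries'' to ``fully carries''. I would therefore proceed in three stages: first produce a lamination $\Lambda$ fully carried by $B$ that meets $\partial M$ in curves of the prescribed multislope; then check that $\Lambda$ is essential in $M$; and finally cap $\Lambda$ off inside the filling solid tori and verify that essentiality is preserved, which is the only place where the hypothesis on tori bounding solid tori enters.

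\textbf{Construction of $\Lambda$ (the technical heart).} I would begin on the boundary. Since $(s_1,\dots,s_k)$ is realised by $\partial B$, by definition $\partial B$ fully carries, on each $T_i$, a disjoint union of curves parallel to the slope $s_i$; this is the boundary data I want to extend inward. The extension is effected by an infinite sequence of splittings of $B$ guided by the cusp directions: at each stage one splits $N_B$ along a branch sector in the direction of its cusps. The point of condition $(5)$ is to keep this process from stalling: because no branch sector is a sink or half sink disc, every sector has a smooth boundary arc whose cusp points \emph{out} of it, so at every stage there is a legal splitting that propagates the lamination further into $M$. The resulting nested neighbourhoods have as inverse limit a closed set $\Lambda=\bigcap_i N_{B_i}\subset N_B$ that is transverse to, and meets, \emph{every} interval fibre of $N_B$; hence $\Lambda$ is a lamination fully carried by $B$, and by construction $\Lambda\cap\partial M$ is the chosen family of parallel slope curves of multislope $(s_1,\dots,s_k)$. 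I expect this step to be the main obstacle: one must control the combinatorics of the cusp directions carefully enough that the splitting sequence both admits an inverse limit and leaves no fibre of $N_B$ empty.

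\textbf{Essentiality in $M$.} I would deduce essentiality of $\Lambda$ from the relative Gabai--Oertel criteria, which are built into Definition \ref{laminar}. Condition $(1)$ (incompressibility and $\partial$-incompressibility of $\partial_h N_B$, and no sphere or disc component) excludes sphere leaves and leaves compressing in $M\setminus \interior{N_B}$; conditions $(2)$ and $(3)$ (no monogon, irreducibility of $M\setminus\interior{N_B}$, and incompressibility of $\partial M\setminus\interior{N_B}$) supply the complementary incompressibility; and condition $(4)$ rules out Reeb components. Together these say that $\Lambda$ has no sphere leaves, no compressible leaves, no compressible boundary and no Reeb components, i.e. $\Lambda$ is an essential lamination of $M$ meeting $\partial M$ transversely in the slope curves.

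\textbf{Extension to the filling, where the torus hypothesis is used.} Finally, each component of $\Lambda\cap\partial M$ is a curve of the filling slope $s_i$, hence bounds a meridian disc in the attached solid torus; capping the boundary leaves off with a family of such parallel meridian discs yields a lamination $\hat\Lambda$ in $M(s_1,\dots,s_k)$ whose restriction to $M$ is $\Lambda$. The only ways essentiality can newly fail after filling are the appearance of a sphere leaf bounding a ball or of a leaf that compresses inside one of the filling solid tori. A compressing disc for a leaf of $\hat\Lambda$, isotoped off the cores of the filling tori and combined with the essentiality of $\Lambda$ already established in $M$, would force $B$ to carry a torus bounding a solid torus in $M(s_1,\dots,s_k)$ --- precisely the configuration excluded by hypothesis. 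Hence no such disc exists, $\hat\Lambda$ is an essential lamination of the filled manifold, and this completes the proof. The construction by splitting is where I expect the real difficulty to lie; the filling step is otherwise a standard essentiality argument in which the torus hypothesis is exactly the needed input.
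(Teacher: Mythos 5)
Your proposal reverses the logical order of the argument the paper actually relies on. The paper does not prove this theorem itself; it quotes it from \cite{L2}, and the remark immediately following the statement records the structure of Li's proof: one first splits $B$ in a neighbourhood of $\partial M$ so that $\partial B$ becomes a union of parallel simple closed curves of slope $s_i$ on each $T_i$, then glues in the filling solid tori and caps $B$ off with meridional discs to obtain a branched surface $B(s_1,\dots,s_k)$ in the \emph{closed} manifold $M(s_1,\dots,s_k)$; the hypothesis that $B$ carries no torus bounding a solid torus is used precisely to verify that $B(s_1,\dots,s_k)$ is still laminar, at which point \cite[Theorem~1]{L} (the closed case) is invoked as a black box to produce an essential lamination, whose restriction to $M$ is the lamination of the statement. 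You instead build the lamination inside $M$ first by an infinite splitting process and only then cap off. That is a legitimate alternative outline, but it forces you to reprove the hard content of \cite[Theorem~1]{L} in the relative setting instead of quoting it, and it relocates the essentiality argument to the filled manifold, where it is most delicate.

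Two concrete gaps. First, in your construction of $\Lambda$ you assert that ``by construction $\Lambda\cap\partial M$ is the chosen family of parallel slope curves,'' but you never explain how the splitting sequence is made compatible with the boundary data; the actual mechanism is to realise the slope at the level of the branched surface (splitting near $\partial M$ until $\partial B$ consists of parallel circles of slope $s_i$) \emph{before} any lamination is produced, and this is exactly the step the paper's remark isolates as the content added to the statement. Moreover, the observation that ``no sink disc means every sector has an outward cusp, so a legal splitting always exists'' is far from sufficient to guarantee that the inverse limit of the splittings is a lamination meeting every fibre of $N_B$; that is the delicate combinatorial core of \cite{L} and cannot be waved through. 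Second, your final step asserts that a compressing disc for a leaf of $\hat\Lambda$ ``would force $B$ to carry a torus bounding a solid torus,'' with no argument. The genuine danger after filling is a Reeb component, i.e.\ a torus leaf bounding a solid torus in $M(s_1,\dots,s_k)$, and the hypothesis is used in \cite{L2} to show that the capped-off branched surface still satisfies the conditions of Definition \ref{laminar} (in particular condition $(4)$ and the complementary-region conditions), not to exclude compressing discs directly. As written, both of these steps are assertions sitting exactly where the theorem's real content lives.
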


A train track \emph{realises} a slope $r$ if it can be splitted into finitely many copies of that slope, see \cite{PH}. An equivalent, but more computationally useful definition, is given in Subsection 
\ref{subsec: boundary train tracks}.

\begin{rem}
In \cite{L2} the statement of the theorem is given for $M$ with connected boundary but, as already observed in \cite{KR1}, if $M$ has multiple boundary components we can split $B$ in a neighbourhood of each boundary tori $T_i$ and the same proof of \cite{L2} works.
\end{rem}

\begin{rem}
The statement of Theorem \ref{boundary train tracks} is slightly more detailed than the version of \cite{L2}. The details we have added come from the proof of Theorem \ref{boundary train tracks}. In fact the idea of the proof is to split the branched surface $B$ in a neighbourhood of $\partial M$ so that it intersects $T_i$ in parallel simple closed curves of slopes $s_i$, for $i=1,\dots k$. In this way, when gluing the solid tori, we can glue  meridional discs of these tori to $B$ to obtain a branched surface $B(s_1,\dots, s_k)$ in $M(s_1,\dots, s_k)$ that is laminar and that by \cite[Theorem~1]{L} fully carries an essential lamination. In particular, this essential lamination is obtained by gluing the meridional discs of the solid tori to an essential lamination in $M$ that intersects $T_i$ in parallel simple closed curves of slopes $s_i$, for $i=1,\dots, k$.
\end{rem}

\subsection{Constructing branched surfaces in fibered manifolds}\label{subsec: constructing branched surfaces}
In this section we recall a general method to build branched surfaces in mapping tori. This will be the starting point to construct taut foliations on surgeries on fibered two-bridge links.

Let $S$ be a connected oriented surface with boundary, let $h$ be an orientation preserving homeomorphism of $S$ fixing $\partial S$ pointwise and let $M_h$ be the mapping torus associated to $(S,h)$.

We consider pairwise disjoint properly embedded arcs $\alpha_1,\dots, \alpha_k$ in $S$ and discs $\overline{D}_i=\alpha_{i}\times[0,1]\subset S\times [0,1]$. Each of these discs has a ``bottom'' boundary, $\alpha_i\times \{0\}$, and a ``top'' boundary, $\alpha_i\times \{1\}$. When we consider the images of these discs in $M_h$ under the projection map
$$
S\times [0,1]\rightarrow M_h
$$
we have that the bottom and top boundaries become respectively $\cup_{i}\alpha_i\subset S$ and $\cup_{i}h(\alpha_i)\subset~S$.

We perturb the discs $\overline{D}_i$'s in a neighbourhood of $S\times \{1\}\subset S\times [0,1]$ so that when projected to $M_h$ their top boundaries define a family of arcs, that we still denote $h(\alpha_i)$'s, in $S$ such that for each $i,j\in \{1,\dots, k\}$ the intersection between $\alpha_i$ and $h(\alpha_j)$ is transverse and the endpoints of $\alpha_i$ and $h(\alpha_i)$ are disjoint.  We also denote by $D_i$ the projected perturbed disc contained in $M_h$ and we refer to these discs as \emph{product discs.}
If we assign (co)orientations to these discs, since $S$ is (co)oriented, we can smoothen $S\cup D_1 \cup \cdots \cup D_k$ to a branched surface $B$ by imposing that the smoothing preserves the co-orientation of $S$ and of the discs. In particular, each disc has two possible co-orientations and hence it can be smoothed in two different ways. This operation is demonstrated in Figure \ref{smoothings}, where $S$ is a torus with an open disc removed.

\begin{figure}[]
    \centering
    \includegraphics[width=0.9\textwidth]{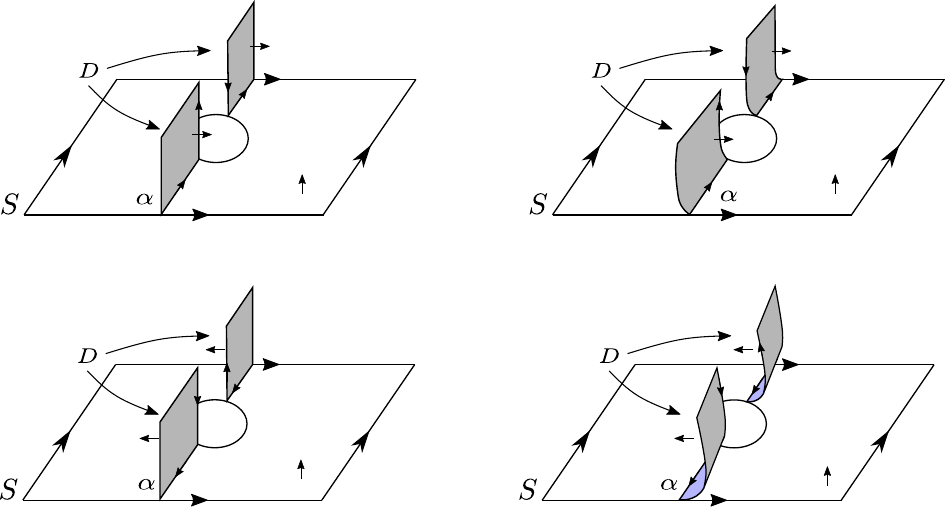}
    \caption{How to smoothen $S\cup D$ according to the co-orientations.}
    \label{smoothings}
\end{figure}

The following proposition shows that in this setting, under very mild hypothesis on the branched surface $B$, we can get taut foliations on fillings on $M_h$. The proof is obtained by combining \cite[Lemma~3.16]{S} and \cite[Lemma~3.23]{S} and uses as a fundamental result Theorem \ref{boundary train tracks}.
\begin{prop}\label{prop: no sink implica taut foliation}
Suppose that $B$ is a branched surface constructed as described above. Suppose also that $B$ has no sink discs and that $S\setminus \cup_{i=1}^k{\alpha_i}$ has no disc components. If $(r_1,\dots,r_n)$ is a multislope realised by $\partial B$ then $M_h(r_1\dots,r_n)$ contains a co-orientable taut foliation. More precisely there exists a co-orientable taut foliation in $M_h$ intersecting the boundary component $T_i$ in a foliation by curves of slopes $r_i$, for $i=1,\dots, n$.\qed
\end{prop}

\subsection{Boundary train tracks}\label{subsec: boundary train tracks}
We now briefly discuss the boundary train tracks of the branched surfaces constructed with the procedure described above. We start by describing an explicit way to compute the slopes realised by a train track. 

Let $\tau$ be an oriented train track in a torus $T$. Fix a meridian-longitude basis $\mu, \lambda$ and use it to identify slopes on $T$ with elements in $\overline{\matQ}$.
It is possible to compute the slopes realised by a train track $\tau$ by endowing it with \emph{weight systems}. A weight system $w$ on $\tau$ is the assignment of a positive rational number, called \emph{weight}, to each sector of $\tau$ so that at each branch point of $\tau$ the sum of the weights of the incoming sectors is equal to the sum of the outgoing one. Given that $\tau$ is oriented, we can associate to such a weight system the rational number $\frac{w_{\mu}}{w_{\lambda}}$, where $w_{\mu}$ and $w_{\lambda}$ are the \emph{weighted} intersections of the train track with our fixed meridians $\mu$ and longitudes $\lambda$, as we would do with oriented simple closed curves. This quotient can be interpreted as a \emph{slope} in $T$ and it can be proved that the slopes $\frac{p}{q}$ obtained in this way are exactly those realised by the train track. For details, see \cite{PH}. Figure \ref{fig:train track example} shows an example of a train track with weight systems.

\begin{figure}[]
    \centering
    \includegraphics[width=0.5\textwidth]{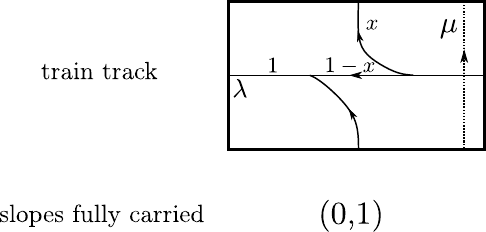}
    \caption{An example of train track $\tau$ with a weight system $w_x$, together with the set of slopes realised by $\tau$. Notice that in order for the weights to be positive, $x$ must be a rational number in $(0,1)$.}
    \label{fig:train track example}
\end{figure}

We now focus our attention on a branched surface $B$ in a mapping torus $M_h$ obtained by adding product discs to the fiber $S$. We start by fixing a meridian $\mu_j$ and a longitude $\lambda_j$ for each boundary component $T_j$ of $M_h$. We take $\lambda_j$ to be $S\cap T_j$, with the orientation induced by $S$, and as meridian the curve $\mu_j=\frac{\{x_j\}\times [0,1]}{\sim_h}$, where $x_j\in S\cap T_j$, oriented in the direction of ascending $t\in [0,1]$. Notice that when $T_j$ is oriented as the boundary of $M_h$, the algebraic intersection $<\mu_j, \lambda_j>$ is negative. By using this meridian-longitude basis we can identify slopes on $T_j$ with elements in $\overline{\matQ}$.

Given a properly embedded oriented arc $\alpha$ in $S$ we denote its endpoints with $\alpha(0)$ and $\alpha(1)$, so that $\alpha$ goes from $\alpha(0)$ to $\alpha(1)$.
Let us now consider a product disc $D$ in the branched surface $B$, spanned by an arc $\alpha$. The fixed orientation on $D$ induces an orientation on $\alpha$ and on the two arcs of intersections $D\cap \partial M_h$. More precisely the arc in $D\cap \partial M_h$ containing $\alpha(1)$ is oriented in the direction of ascending $t\in [0,1]$, and the one containing $\alpha(0)$ is oriented in the direction of descending $t\in [0,1]$ (see Figure \ref{smoothings} for an example). For a fixed boundary component $T$ of $M_h$ the four possible configurations of one arc of $D\cap T$ are described in Figure \ref{fig:train track table}, together with the set of slopes fully carried by the train tracks given by one component of $D\cap T$ and the longitude of $T$.

\begin{figure}[]
    \centering
    \includegraphics[width=1\textwidth]{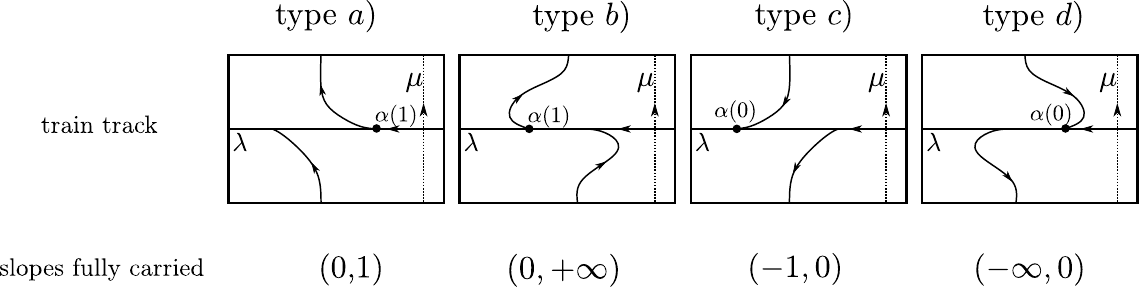}
    \caption{Possible train tracks given by the longitude of $T$ and one arc of intersection in $T\cap D$, together with the intervals of slopes they fully carry.}
    \label{fig:train track table}
\end{figure}

In the course of the paper, we will mostly encounter train tracks of the four types described in Figure \ref{fig:train track table}, or those that we call of \emph{type  $t_1+\cdots + t_m$}, where $t_i$ is $a), b), c)$ or $d)$, for $i=\{1,\dots, m\}$. These are obtained by juxtaposing train tracks of type $t_i$, for $i=\{1,\dots, m\}$, preserving the cyclic order. See Figure \ref{fig:type a+c} for an example.

\begin{figure}[]
    \centering
    \includegraphics[width=0.6\textwidth]{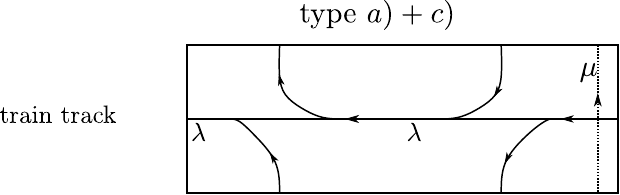}
    \caption{A train track of type $a)+c)$.}
    \label{fig:type a+c}
\end{figure}

The next lemma -- which follows by a direct computation with weight systems -- will be useful when computing the sets of slopes realised by our train tracks.

\begin{lemma}\label{lemma: unlinked}
Let $\tau$ be a train track of type  $t_1+\cdots + t_m$. Then, a slope $r$ is realised by $\tau$ if and only if $r=r_1+ \cdots  + r_m$, where $r_i$ is a  slope realised by the train track of type $t_i$.\qed
\end{lemma}
In many of the cases of our interest, the boundary train tracks will satisfy the hypothesis of Lemma \ref{lemma: unlinked} and so we can use it to compute the slopes that they realise. Two examples of train tracks that do not satisfy the hypothesis of the previous lemma can be found in Figure \ref{fig: Ln branched surface 2}.

\subsection{Fibered hyperbolic two-bridge links}\label{subsec: Fibered hyp two-bridge links}

We are now ready to construct foliations on surgeries on the hyperbolic fibered two-bridge links. The general strategy is simple: we have from Equation \eqref{eq: monodromia} explicit descriptions of the monodromies of these links and we want to construct branched surfaces in the way described in Section \ref{subsec: constructing branched surfaces}. If we are able to construct these branched surfaces so that they have no sink discs, then by Proposition \ref{prop: no sink implica taut foliation} we can deduce that all the surgeries corresponding to the multislopes realised by these branched surfaces contain co-orientable taut foliations. For this reason, we will have to study which multislopes are realised by the boundary train tracks.

Before we begin, we make the following remark and establish some conventions.

\begin{rem}
In \cite{R1} it is proved that all nontrivial surgeries on fibered hyperbolic knots with fractional Dehn twist coefficient zero contain a co-orientable taut foliation. A similar result does not hold for links. In fact, all fibered hyperbolic two-bridge links have fractional Dehn twist coefficient zero (on both boundary components). This can be proved by finding appropriate arcs moved to the left and to the right and by using \cite[Proposition~3.1]{HKM1}.  Nonetheless, this family contains infinitely many $L$-space links.
\end{rem}
\textbf{Conventions.} Thorough the section we will use the following conventions:
\begin{itemize}
\item we fix a fiber surface $S$ for the two-bridge link $L=L(2b_1,\dots, 2b_n)$ and we fix its orientation as in Figure \ref{fig:plumbing two bridge}. With the induced orientation, $L$ has linking number 
$$
\lk(L)=\sum_{i=0}^{k}b_{2i+1}
$$
where $n=2k+1$;
\item when a link is fibered there is a natural choice of meridians and longitudes that is in general different from the one induced by the ambient manifold $S^3$, obtained as follows. We identify $S^3\setminus {\rm int}(N_L)\cong \frac{S\times [0,1]}{\sim_h}$, where $N_L$ is a tubular neighbourhood of $L$. We fix a point $x_i$ in each boundary component of $S$ and we consider the curves $\mu_i=\frac{\{x_i\}\times [0,1]}{\sim_{h}}$ oriented in the direction of ascending $t\in [0,1]$ as meridians and the boundary components $\lambda_i$ of $S$ as longitudes. By definition of fibered link, the meridians defined in this way coincide with the usual meridians of the link. On the other hand these longitudes do not coincide in general with the canonical longitudes of the link components. In fact, letting $l_i$ denote the canonical longitude of $K_i$, we have that

\begin{equation*}\label{eq: cambio coefficienti}
    \lambda_i+\sum_{j\ne i}\lk(K_i, K_j)\mu_i=l_i 
    \tag{$\star$}
\end{equation*}
as elements in $H_1(\partial N_{K_i}, \matZ)$, where $N_{K_i}$ is the connected component of $N_L$ containing $K_i$.

From now on we will refer to the bases $(\mu_i, \lambda_i)$ as the \emph{Seifert framing}, and to the bases $(\mu_i,l_i)$ as the \emph{canonical framing}. Notice that it follows from Equation \eqref{eq: cambio coefficienti} that the longitudes given by the Seifert framing do not depend on the choice of the Seifert surface.
Unless otherwise stated we use Seifert framings;
\item we will always suppose $n>1$, because when $n=1$ the only links obtained in this way are the Hopf links and we are interested in hyperbolic links;
\item we construct branched surfaces by considering \emph{oriented} arcs in the fiber surface $S$ and then by attaching product discs as in Section \ref{subsec: constructing branched surfaces}. We will always co-orient the discs in the following way: we orient them so that the orientations on their boundaries induce the given orientation on the arcs and then we use the orientation of the ambient manifold to co-orient them. Analogously, the co-orientation of the fiber $S$ is obtained by using the orientation of $S$ and of the ambient manifold.
A good way to keep in mind the cusps directions of branched surfaces constructed in the way is the following: looking at the positive side of $S$, the cusps directions is pointing right along the arcs $\alpha$ and $\beta$ with respect to their orientations and is pointing left along the oriented arcs $h(\alpha)$ and $h(\beta)$ with respect to their orientations. See Figure \ref{fig: river twist};
\item drawings will usually show the positive side of the (abstract) fiber surface $S$;
\item we will usually omit $1$-handles from the drawings, and we draw the attaching arcs of the $1$-handles in pink. Compare Figure \ref{fig:curves} with Figure \ref{fig: bridge twist pos};
\item the arcs $\alpha, \beta, \dots$ lie on the positive side of $S$ and are depicted with solid lines; their images $h(\alpha), h(\beta), \dots$ via the monodromy $h$ lie on the negative side of $S$ and are depicted with dashed lines;
\item we will usually denote the sectors of a branched surface in $S$ with letters $\mathcal{A}, \mathcal{B}, \dots$ .
\end{itemize}

With the following lemma we partition fibered hyperbolic two-bridge links in few families and study all of them separately.

\begin{lemma}\label{lemma:subdivision in families}
Let $L=L(2b_1,\dots, 2b_n)$ with $|b_i|=1$ for all $i$'s and $n$ odd. If $L$ is not a torus link then, up to mirroring $L$, $(b_1,\dots, b_n)$ satisfies:
\begin{itemize}
\item \emph{Family 1:} there exist indices $l$ and $m$ such that $b_{2l}\ne b_{2m}$;
    \item \emph{Family 2:} $(b_1,\dots, b_n)=(b_1,-1, b_3, \dots, -1, b_n)$ and there exist indices $j\ne l\ne m$ such that $b_{2j+1}=b_{2m+1}=-1$ and $b_{2l+1}=1$;
    \item \emph{Family 3:} $(b_1,\dots, b_n)=(-1, -1, -1, \dots, -1, -1)$;
    \item \emph{Family 4:} $(b_1,\dots, b_n)=(b_1,-1, b_3, \dots, -1, b_n)$ where exactly one $b_{2j+1}$ is $-1$. 
\end{itemize}
\end{lemma}
\begin{proof}
Suppose that $L$ does not belong to Family $1$. Then, up to mirroring, we can suppose that $b_{2l}=-1$ for all indices $l$. We now consider the integers $b_{2j+1}$'s:
\begin{itemize}
\item if there are at least two of them that are negative and one that is positive, then $(b_1,\dots, b_n)$ belongs to Family $2$;
\item if none of them is positive, then $(b_1,\dots, b_n)$ belongs to Family $3$;
\item if exactly one of them is negative, then $(b_1,\dots, b_n)$ belongs to Family $4$;
\item if none of them is negative, then $L=(2, -2, 2, \dots, -2, 2)$ and is a torus link.
\end{itemize}
This concludes the proof.
\end{proof}

Before beginning the study of these families, we briefly summarise what we will prove and the general strategy of the proofs. For links in families $1, 2$ and $3$ we will construct taut foliations on all rational surgeries. The links in family $4$ will be divided into two subfamilies, the first containing the links with $b_{2j+1}=-1$ for some $j$ such that $2j+1\ne 1, n$ and the second containing those with $b_1=-1$ or $b_n=-1$. The links in the first subfamily will have taut foliations on all rational surgeries, while those in the second will be isotopic to the links $L_n$ in Figure \ref{figure:link L_n} and we will construct taut foliations on all surgeries in $\big{(}(-\infty, n)\times \matQ\big{)}\cup \big{(}\matQ\times (-\infty, n)\big{)}$.

For families $1$ and $2$, the taut foliations will be constructed by using branched surfaces obtained by adding product discs to the fiber surface. For families $3$ and $4$ we need a more elaborate strategy, and we will drill out one or two carefully chosen loops in the link complements to get new fibered $3$-component and $4$-component links. We will then study these new links, and construct branched surfaces in their exterior by adding product discs to their fiber surfaces.

\subsection{Study of the links in Family 1}\label{subsec: family 1}

Recall from the end of Section \ref{sec: two-bridge background} the definition of river twist and bridge twist.

\begin{lemma}\label{lemma: river twist}
Let $L=L(2b_1,\dots, 2b_n)$ with $n$ odd and $|b_i|=1$ for all $i$'s, and let $h$ denote its monodromy as in Equation \eqref{eq: monodromia}. Let $M$ denote the exterior of $L$. Then
\begin{enumerate}
    \item if there is at least one positive (resp. negative) river twist in the factorisation of $h$, the manifold $M(r_1,r_2)$ contains a co-orientable taut foliation for every multislope $(r_1,r_2)\in (-\infty, 1)^2$ (resp. for all $(r_1,r_2)\in (-1, +\infty)^2$); see Figure \ref{fig: river twist region}a)-b);
    \item if there are two river twists with different exponents in the factorisation of $h$, the manifold $M(r_1,r_2)$ contains a co-orientable taut foliation for every multislope $(r_1,r_2)\in \big{(}(-1, +\infty)\times (-\infty, 1)\big{)}\cup \big{(}(-\infty, 1)\times(-1 +\infty)\big{)}$; see Figure \ref{fig: river twist region}c).
\end{enumerate}
\end{lemma}
  
\begin{figure}[]
    \centering
    \includegraphics[width=0.7\textwidth]{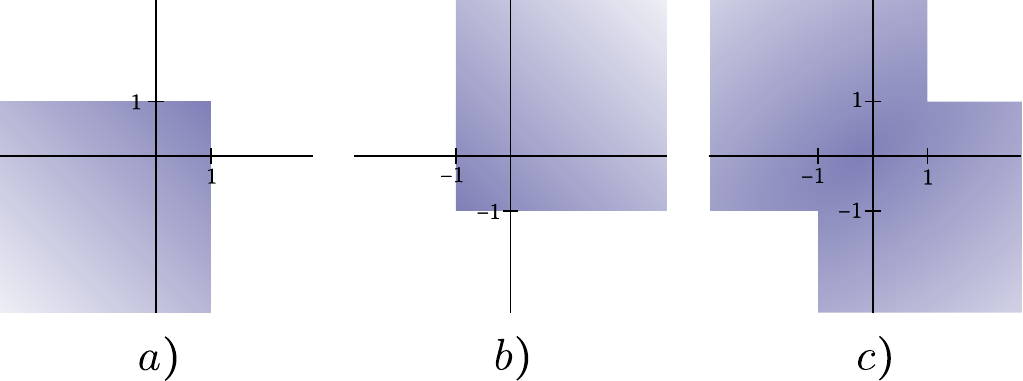}
    \caption{From left to right, the slopes $(r_1, r_2)$ in the coloured region yield manifolds with co-orientable taut foliations in the case where there is respectively: at least one positive river twist, at least one negative river twist, two river twists with different exponents in the factorisation of the monodromy $h$.}
    \label{fig: river twist region}
\end{figure}

\begin{proof}\begin{enumerate}[leftmargin=*]
  \item
Suppose that there is a positive river twist along the curve $\gamma_i$. We consider the arcs $\alpha$ and $\beta$ as in Figure \ref{fig: river twist}. The oriented arcs $\alpha$ and $\beta$ determine a co-oriented branched surface $B$ obtained by attaching two discs to the fiber surface $S$ as described in Section \ref{subsec: constructing branched surfaces}. Since $n>1$, $S$ is not an annulus and therefore the complement of $\alpha \cup \beta$ in $S$ has no disc components. Due to the fact that we have chosen $\alpha$ and $\beta$ so that they are disjoint from $\gamma_j$ for $j\ne i$ it follows that $h(\alpha)=\tau_i(\alpha)$ and $h(\beta)=\tau_i(\beta)$, as depicted in Figure \ref{fig: river twist}. In Figure \ref{fig: river twist} we have also labelled the branch locus of $B$ with the cusps directions and denoted with capital letters $\mathcal{A}, \mathcal{B}, \mathcal{C}, \mathcal{D}$ the four sectors of the branched surface $B$ in $S$ and none of them is a sink disc. For this reason we can apply Proposition \ref{prop: no sink implica taut foliation} and deduce that $M(r_1,r_2)$ supports a co-orientable taut foliation for all the multislopes $(r_1, r_2)$ realised by $\partial B$.
On each boundary component of $M$ we obtain a train track, depicted in Figure \ref{fig: train track river twist}$a)$, of type $a)+d)$. Therefore by using Lemma \ref{lemma: unlinked} we have that the boundary train tracks of $B$ realise all the multislopes in $(-\infty, 1)^2$ and by applying Proposition \ref{prop: no sink implica taut foliation} we obtain taut foliations on $M(r_1,r_2)$ for all $(r_1,r_2)\in (-\infty, 1)^2$.

\begin{figure}[]
    \centering
    \includegraphics[width=1\textwidth]{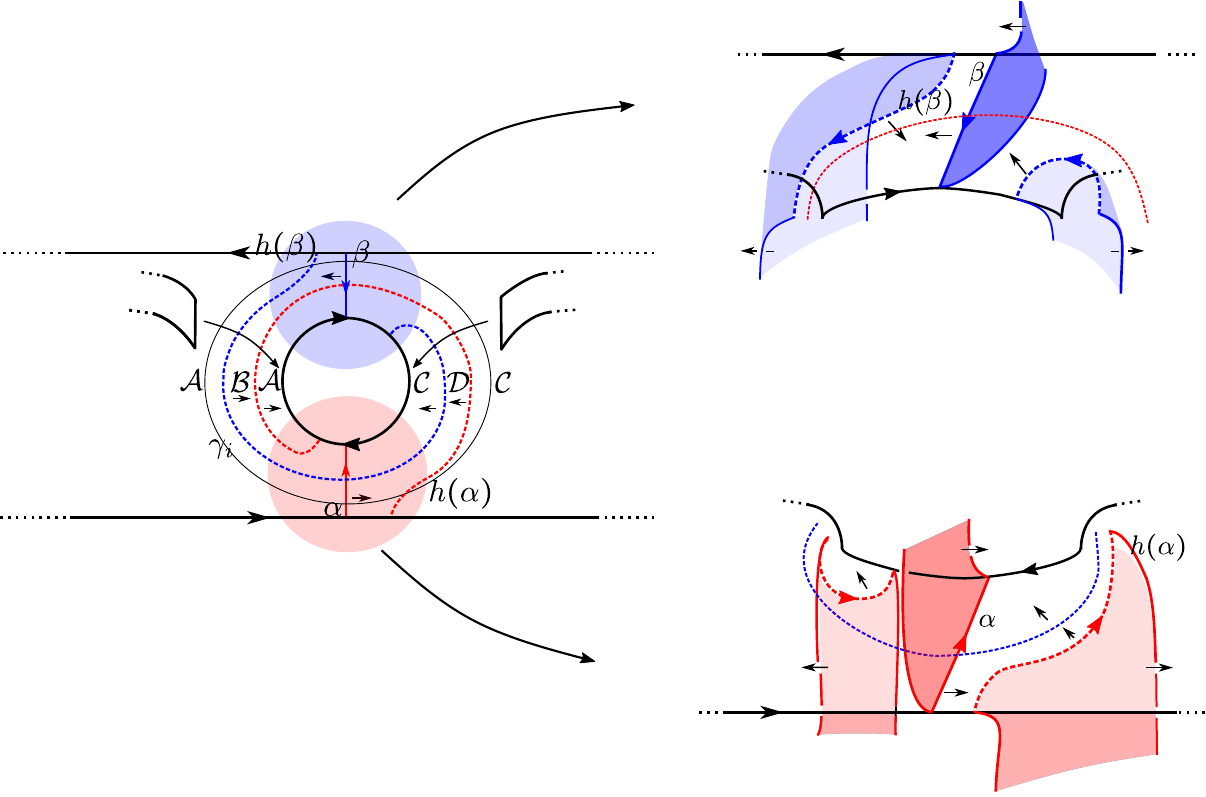}
    \caption{The arcs $\alpha$ and $\beta$ and the co-oriented discs spanned by them. The letters $\mathcal{A}, \mathcal{B}, \mathcal{C}, \mathcal{D}$ denote the sectors of $B$ in $S$.}
    \label{fig: river twist}
\end{figure}

If there is a negative river twist, we consider the same oriented arcs $\alpha$ and $\beta$ (see Figure \ref{fig: river_twist_neg}), and on each of the two boundary components we obtain the train track depicted in Figure \ref{fig: train track river twist}$b)$, that is of type $b)+c)$. This train track realises all the slopes in $(-1, +\infty)$ and so we obtain taut foliations on $M(r_1,r_2)$ for all $(r_1,r_2)\in (-1, +\infty)^2$.

\begin{figure}[]
    \centering
    \includegraphics[width=0.5\textwidth]{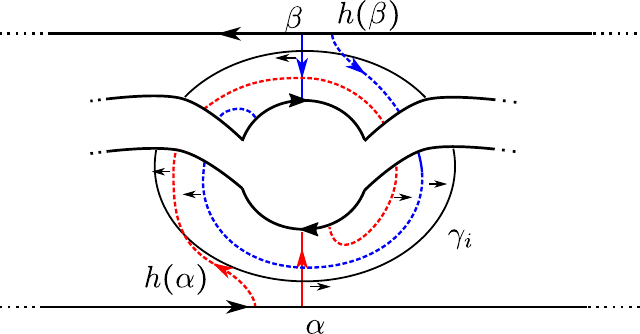}
    \caption{The arcs $\alpha$ and $\beta$ when the river twist is negative.}
    \label{fig: river_twist_neg}
\end{figure}

\begin{figure}[]
    \centering
    \includegraphics[width=0.6\textwidth]{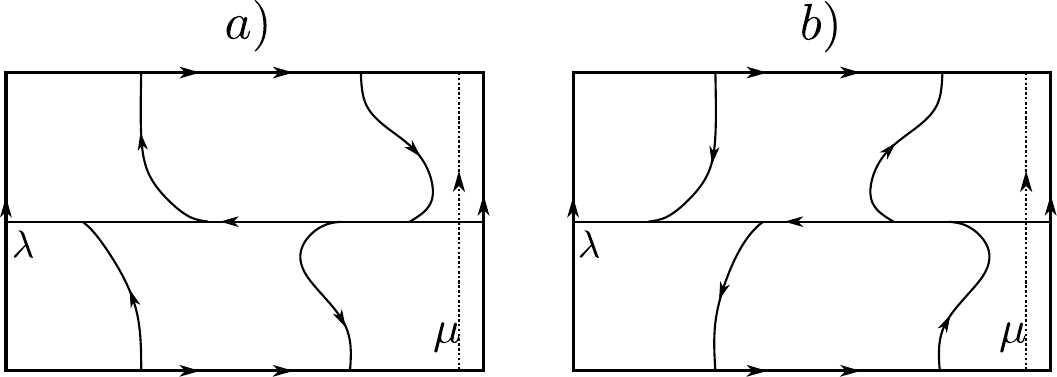}
    \caption{The boundary train tracks in the case where $a)$ there is a positive river twist and $b)$ there is a negative river twist.}
    \label{fig: train track river twist}
\end{figure}

\item Suppose now that there are two river twists with different exponents in the factorisation of $h$ and suppose that the positive one is along the curve $\gamma_i$ and the negative one is along $\gamma_j$. We suppose $i<j$ but the proof does not change if $j<i$. We choose now $\alpha$ and $\beta$ as in Figure \ref{fig: discord_river_twist} and as before we have $h(\alpha)=\tau_i(\alpha)$ and $h(\beta)=\tau_j(\beta)$. Also in this case the complement of $\alpha \cup \beta$ contains no disc components. Moreover the complement of $\alpha\cup \beta\cup h(\alpha) \cup h(\beta)$ in $S$ is connected and this implies that there are no sink discs in the branched surface associated to $\alpha$ and $\beta$.

The boundary train tracks are shown in Figure \ref{fig: discord_river_twist}. The one on the boundary component of $M$ containing the boundary component of $S$ labelled with $1$ is of type $b)+c)$ and by Lemma \ref{lemma: unlinked} it realises all slopes in $(-1, +\infty)$. 
On the other boundary component we get a train track of type $a)+d)$ that hence realises all slopes in  $(-\infty, 1)$. Therefore as a consequence of Proposition \ref{prop: no sink implica taut foliation} we have taut foliations in $M(r_1,r_2)$ for all $(r_1,r_2)\in (-1, +\infty)\times(-\infty, 1)$. As by Lemma \ref{lemma: symmetric} two-bridge links are symmetric, we deduce that there are taut foliations also on the surgeries associated to coefficients $(r_1,r_2)\in
(-\infty, 1)\times(-1, +\infty)$.

\begin{figure}[]
    \centering
    \includegraphics[width=0.9\textwidth]{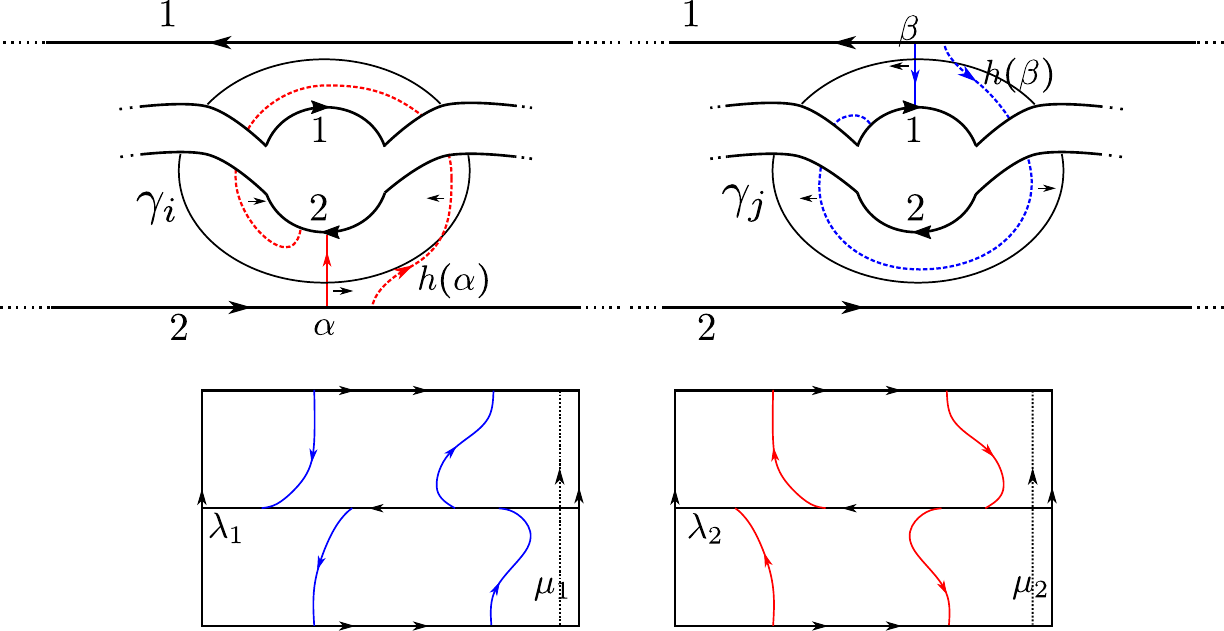}
    \caption{This picture describes the choice of the arcs $\alpha$ and $\beta$ when the twist along the curve $\gamma_i$ is positive and the one along $\gamma_j$ is negative, together with the boundary train tracks of the associated branched surface.}
    \label{fig: discord_river_twist}
\end{figure}

\end{enumerate}

This concludes the proof.
\end{proof}

\begin{rem}
Recall that we are working with Seifert framings. However we have already noticed that the meridians of the Seifert framing coincide with the canonical meridians of $L$. This implies that a surgery coefficient $(r_1, r_2)$ on $L$ is rational with respect to the Seifert framing if and only if it is rational with respect to the canonical framing.
\end{rem}

\begin{cor}\label{cor: prima riduzione}
If the factorisation of the monodromy $h$ has two river twists with different exponents, i.e. if $L$ belongs to Family $1$, then all the rational surgeries on the link $L$ contain co-orientable taut foliations. 
\end{cor}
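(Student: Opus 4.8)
The plan is to deduce this immediately from Lemma~\ref{lemma: river twist} by verifying that its three conclusions, taken together, account for every finite multislope. First I would observe that the hypothesis ``two river twists with different exponents'' is precisely the statement that the factorisation~\eqref{eq: monodromia} of $h$ contains at least one positive river twist (the one with exponent $+1$) and at least one negative river twist (the one with exponent $-1$). This makes all three items of the lemma simultaneously available: applying item~(1) to the positive river twist produces coorientable taut foliations on $M(r_1,r_2)$ for every $(r_1,r_2)\in(\infty,1)^2$; applying item~(1) to the negative river twist does the same for every $(r_1,r_2)\in(-1,\infty)^2$; and item~(2) supplies them for every $(r_1,r_2)\in\big{(}(-1,\infty)\times(\infty,1)\big{)}\cup\big{(}(\infty,1)\times(-1,\infty)\big{)}$.

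Next I would check that these three regions exhaust $\matQ^2$, the set of finite multislopes. Reading the finite slopes in $(\infty,1)$ as $\{r\in\matQ : r<1\}$ and those in $(-1,\infty)$ as $\{r\in\matQ : r>-1\}$, I would argue by cases on a fixed $(r_1,r_2)\in\matQ^2$. If $r_1<1$ and $r_2<1$, the point lies in $(\infty,1)^2$ and we are done by the first application. Otherwise some coordinate is $\geq 1$; since each of the three regions is symmetric under exchanging the two factors (the region of item~(2) because swapping its factors interchanges its two pieces), I may assume $r_1\geq 1$, so in particular $r_1>-1$. Then either $r_2>-1$, placing the point in $(-1,\infty)^2$, or else $r_2\leq -1<1$, placing it in $(-1,\infty)\times(\infty,1)$. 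In every case $M(r_1,r_2)$ carries a coorientable taut foliation.

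Finally, since the preceding remark shows that the meridians of the Seifert and canonical framings coincide, the finite surgeries on $L$ are the same in the two framings, so the conclusion holds for the finite surgeries in the usual sense. The only genuine content here is the elementary covering check of the second step; I expect no real obstacle, the remainder being a direct invocation of Lemma~\ref{lemma: river twist}. The one point worth stating carefully is the symmetry of the three regions under exchanging coordinates, which is what justifies reducing to the case $r_1\geq 1$ and is guaranteed because two-bridge links are symmetric, so the roles of the two boundary components are interchangeable.
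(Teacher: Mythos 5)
Your proposal is correct and follows essentially the same route as the paper: both deduce the corollary by combining the regions $(\infty,1)^2$, $(-1,\infty)^2$ and $\big{(}(-1,\infty)\times(\infty,1)\big{)}\cup\big{(}(\infty,1)\times(-1,\infty)\big{)}$ from Lemma \ref{lemma: river twist} and observing that their union is all of $\matQ^2$; you merely spell out the case analysis that the paper leaves implicit. (The only cosmetic point: the symmetry of these regions under swapping coordinates is immediate from their description and does not need the symmetry of the link, which is already invoked inside the lemma's proof.)
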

\begin{proof}
It follows from the first part of Lemma \ref{lemma: river twist} that there are co-orientable taut foliations on $M(r_1,r_2)$ for $(r_1,r_2)\in (-\infty,1)^2\cup (-1, +\infty)^2$ and it follows from the second part of Lemma \ref{lemma: river twist}  that there are co-orientable taut foliations on $M(r_1,r_2)$ for $(r_1,r_2)\in \big{(}(-1, +\infty)\times (-\infty,1)\big{)} \cup \big{(}(-\infty,1)\times(-1, +\infty)\big{)}$. The union of these sets is exactly the set of all rational multislopes.
\end{proof}

\subsection{Study of the links in Family 2}\label{subsec: family 2}
As a consequence of Corollary \ref{cor: prima riduzione}, by taking mirrors if necessary, we can reduce our study to the case where the river twists are all positives, i.e. to links of the form $L=L(2b_1,-2, 2b_3, \dots, -2,2b_n)$ with $n$ odd.

\begin{lemma}\label{lemma: bridge twists}
Let $L=L(2b_1,\dots, 2b_n)$ with $n$ odd and $|b_i|=1$ for all $i$'s, and let $h$ denote its monodromy as in Equation \eqref{eq: monodromia}. Let $M$ denote the exterior of $L$. Then
\begin{enumerate}
    \item if there are at least two positive (resp. negative) bridge twists in the factorisation of $h$, the manifold $M(r_1,r_2)$ contains a co-orientable taut foliation for every multislope $(r_1,r_2)\in (-\infty, 1)^2$ (resp. for all $(r_1,r_2)\in (-1, +\infty)^2$); see Figure \ref{fig: bridge twist region}a)-b);
    \item if there are two bridge twists with different exponents in the factorisation of $h$, the manifold $M(r_1,r_2)$ contains a co-orientable taut foliation for every multislope $(r_1,r_2)\in \big{(}(0, +\infty)\times (-\infty,0)\big{)}\cup \big{(}(-\infty,0)\times(0,+\infty)\big{)}$, see Figure \ref{fig: bridge twist region}c).
\end{enumerate}
\end{lemma}

\begin{figure}[]
    \centering
    \includegraphics[width=0.7\textwidth]{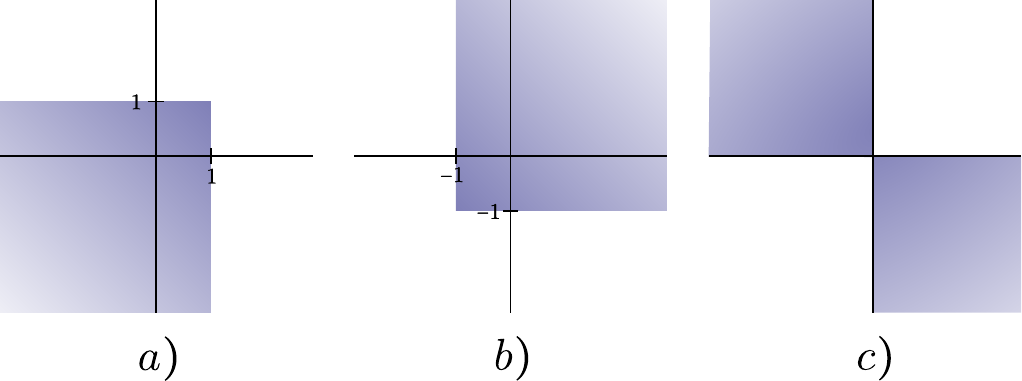}
    \caption{From left to right, the slopes $(r_1, r_2)$ in the coloured region yield manifolds with co-orientable taut foliations in the case where there are respectively: at least two positive bridge twists, at least two negative bridge twists, two bridge twists with different exponents in the factorisation of the monodromy $h$.}
    \label{fig: bridge twist region}
\end{figure}

\begin{proof}
\begin{enumerate}[leftmargin=*]
    \item Suppose that the positive bridge twists are along the curves $\gamma_i$ and $\gamma_j$. We consider the oriented arc $\alpha$ and $\beta$ as in Figure \ref{fig: bridge twist pos}. We have $h(\alpha)=\tau_i(\alpha)$: in fact 
    $$
    h=\underbrace{\tau_2^{\varepsilon_2}\tau_4^{\varepsilon_4}\dots\tau_{2k}^{\varepsilon_{2k}}}_{\text{river twists}}\underbrace{\tau_1^{\varepsilon_1}\tau_3^{\varepsilon_3}\dots\tau_{2k+1}^{\varepsilon_{2k+1}}}_{\text{bridge twists}}
    $$
    and the only bridge twist that has effect on $\alpha$ is $\tau_i$ and the river twists have no effect on $\tau_i(\alpha)$. The same reasoning proves that $h(\beta)=\tau_j(\beta)$. Also in this case we obtain a branched surface that satisfies the hypotheses of Proposition \ref{prop: no sink implica taut foliation}. In fact this branched surface has at most two sectors\footnote{there are two sectors when $i=1$ and $j=n=3$ and there is only one sector otherwise.} $\mathcal{A}$ and $\mathcal{B}$ in $S$ and each of them has some cusp direction on its boundary pointing out of it, as Figure \ref{fig: bridge twist pos} shows. Therefore we just need to study the multislopes realised by the boundary train tracks of $B$. Both of these are of type $a)+d)$ and hence the multislopes realised are the ones in $(-\infty, 1)^2$.
    
\begin{figure}[]
    \centering
    \includegraphics[width=0.65\textwidth]{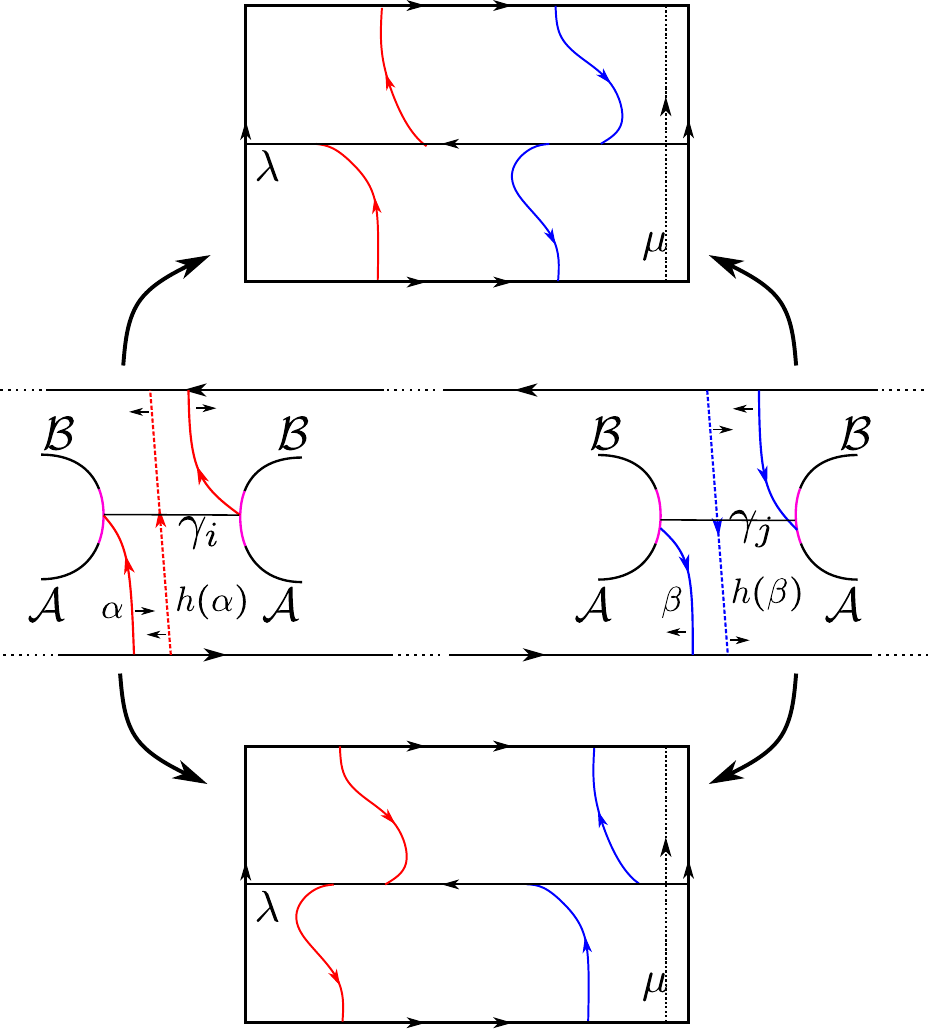}
    \caption{The arcs $\alpha$ and $\beta$, together with their image via the monodromy $h$ and the cusps directions, are depicted. We also describe the train tracks obtained on the boundary of $M$. To simplify the picture we do not draw the $1$-handles; we understand that the pink-coloured lines are pairwise identified in the obvious way.}
    \label{fig: bridge twist pos}
\end{figure}

The case where we have two negative bridge twists is analogous: we choose $\alpha$ and $\beta$ in the same way but now so that they turn right when they meet the curves $\gamma_i$ and $\gamma_j$. Everything works in the same way but now the multislopes realised by the boundary train tracks are the ones in $(-1, +\infty)^2$.
    \item Suppose that are two bridge twists with different exponents in the factorisation of $h$ and suppose that the positive one is along the curve $\gamma_i$ and the negative one is along $\gamma_j$. We choose $\alpha$ and $\beta$ as in Figure \ref{fig: bridge twist discordi}. Also in this case there are at most two sectors $\mathcal{A}$ and $\mathcal{B}$ of the resulting branched $B$ surface in $S$ and none of them is a sink disc. Moreover, the boundary train tracks of $B$ realise all the slopes in $(0,+\infty)$ and $(-\infty, 0)$. In fact on one boundary component we have a train track of type $a)+b)$ and on the other a train track of type $c)+d)$.
\end{enumerate}
    Using the fact that two-bridge links are symmetric, we obtain the statement.
\end{proof}
\begin{figure}[]
    \centering
    \includegraphics[width=0.6\textwidth]{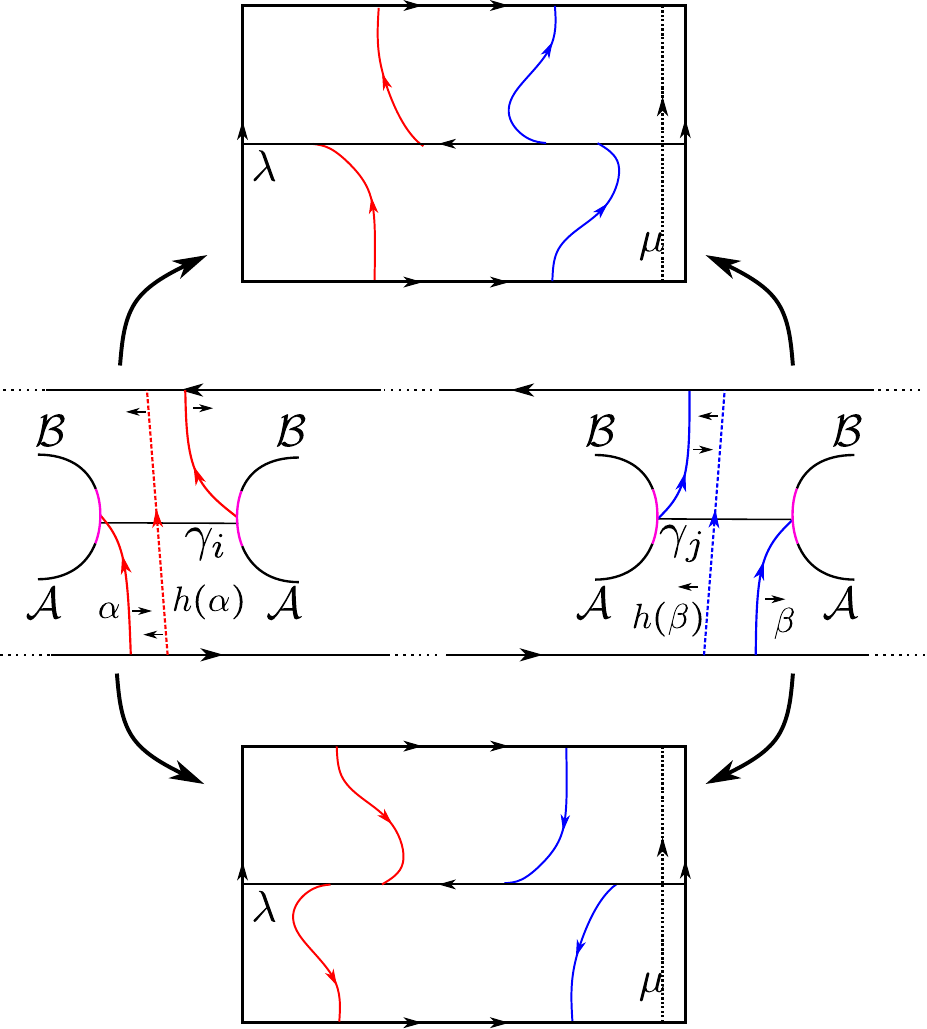}
    \caption{The arcs $\alpha$ and $\beta$ in the case where there are two bridge twist with different exponents and the boundary train tracks realised on $\partial M$.}
    \label{fig: bridge twist discordi}
\end{figure}

\begin{cor}\label{cor: seconda riduzione}
Let $L=L(2b_1,-2,2b_3,\dots, -2,2b_n)$ with $n$ odd and $|b_i|=1$ for all $i$'s, and let $h$ denote its monodromy as in Equation \eqref{eq: monodromia}. If there are at least two negative bridge twists and one positive bridge twist in the factorisation of $h$, i.e. if $L$ belongs to Family $2$, then all the rational surgeries on the link $L$ contain co-orientable taut foliations.
\end{cor}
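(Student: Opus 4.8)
The plan is to deduce this statement by simply assembling the three foliation-producing regions supplied by Lemmas \ref{lemma: river twist} and \ref{lemma: bridge twists}, and then checking that in the present situation their union already exhausts all finite multislopes. Recall that after the reduction of Corollary \ref{cor: prima riduzione} we may assume $L=L(2b_1,-2,2b_3,\dots,-2,2b_n)$, so that every river twist appearing in the factorisation of $h$ is positive; moreover, since $n>1$ there is at least one river twist. Hence Lemma \ref{lemma: river twist}(1) already furnishes a coorientable taut foliation on $M(r_1,r_2)$ for every $(r_1,r_2)\in(\infty,1)^2$.

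Next I would feed the two hypotheses on bridge twists into Lemma \ref{lemma: bridge twists}. Since there are at least two negative bridge twists, its first part (the ``resp. negative'' case) yields foliations for every $(r_1,r_2)\in(-1,\infty)^2$. Since simultaneously there is a positive bridge twist and a negative bridge twist, in particular there are two bridge twists with different exponents, so its second part yields foliations for every $(r_1,r_2)\in\big((0,\infty)\times(\infty,0)\big)\cup\big((\infty,0)\times(0,\infty)\big)$.

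It then remains to verify the elementary covering fact that these three open regions together contain every finite multislope $(r_1,r_2)\in\matQ^2$. I would argue on the complement: if $(r_1,r_2)$ lies neither in $(\infty,1)^2$ nor in $(-1,\infty)^2$, then $r_1\geq 1$ or $r_2\geq 1$, and at the same time $r_1\leq -1$ or $r_2\leq -1$. A short case split handles this: if $r_1\geq 1$ then $r_1\leq -1$ is impossible, forcing $r_2\leq -1$, so $r_1>0$ and $r_2<0$; the symmetric possibility $r_2\geq 1$ forces $r_1\leq -1$, so $r_1<0$ and $r_2>0$. In either case the point lands in the mixed region $\big((0,\infty)\times(\infty,0)\big)\cup\big((\infty,0)\times(0,\infty)\big)$. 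Thus every point of $\matQ^2$ is covered, which is precisely the asserted conclusion.

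There is no genuine obstacle here, as the substantive work already lives in the two lemmas; the only point requiring care is bookkeeping with the circular interval conventions (so that $(\infty,1)$ is the arc of slopes $s<1$, $(\infty,0)$ the arc $s<0$, and so on) together with the open-versus-closed behaviour along the boundary slopes $r_i=\pm 1$ and $r_i=0$. The case analysis above is arranged exactly so that these boundary values, where two of the three regions fail, are absorbed by the third, leaving no gap.
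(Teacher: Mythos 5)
Your proof is correct and follows essentially the same route as the paper: positive river twists give $(\infty,1)^2$ via Lemma \ref{lemma: river twist}(1), the two negative bridge twists give $(-1,\infty)^2$ via Lemma \ref{lemma: bridge twists}(1), the mixed-sign pair of bridge twists gives the two mixed quadrant regions via Lemma \ref{lemma: bridge twists}(2), and the union covers all finite multislopes. Your explicit case analysis of the covering is a welcome addition to what the paper merely asserts.
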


\begin{proof}
As a consequence of the fact that the factorisation of $h$ contains positive river twists, by Lemma \ref{lemma: river twist} we know that $M(r_1,r_2)$ contains a taut foliation for all the multislopes $(r_1,r_2)\in (-\infty, 1)^2$. Moreover, since there are two negative bridge twists it follows from the first part of Lemma \ref{lemma: bridge twists} that $M(r_1,r_2)$ contains a taut foliation for all the multislopes $(r_1,r_2)\in (-1, +\infty)^2$. As there is also at least one positive bridge twist we can apply the second part of Lemma \ref{lemma: bridge twists} and deduce that $M(r_1,r_2)$ contains a taut foliation for all the multislopes $(r_1,r_2)\in \big{(}(0, +\infty)\times (-\infty,0)\big{)}\cup \big{(}(-\infty,0)\times(0,+\infty)\big{)}$. The union of these sets is exactly the set of all rational multislopes.
\end{proof}
\subsection{Study of the links in Family 3}\label{subsec: family 3}
We now focus our attention on the links composing \emph{Family 3}.
\begin{prop}
Let $L$ be a two-bridge link of the form $L=L(-2, -2, -2, \dots, -2, -2)$, i.e. belonging to Family $3$. Then all the rational Dehn surgeries on $L$ support a co-orientable taut foliation.
\end{prop}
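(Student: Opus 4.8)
The plan is to read off the sign pattern of the monodromy from the exponent formula \eqref{eq: monodromia} and then combine the two foliation lemmas already proved with one extra branched--surface construction tailored to this family. Writing $n=2k+1$ and substituting $b_i=-1$, every even--indexed (river) twist gets exponent $\varepsilon_{2i}=-\mathrm{sgn}(-1)=+1$ and every odd--indexed (bridge) twist gets exponent $\varepsilon_{2j-1}=\mathrm{sgn}(-1)=-1$, so that
\[
h=\tau_2\tau_4\cdots\tau_{2k}\,\tau_1^{-1}\tau_3^{-1}\cdots\tau_{2k+1}^{-1}.
\]
Thus the factorisation contains $k$ \emph{positive} river twists and $k+1$ \emph{negative} bridge twists. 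Since $L$ is hyperbolic we have $n>1$, i.e. $k\geq 1$; hence there is at least one positive river twist and at least two negative bridge twists. By the first part of Lemma \ref{lemma: river twist} the manifold $M(r_1,r_2)$ then carries a coorientable taut foliation for every $(r_1,r_2)\in(\infty,1)^2$, and by the first part of Lemma \ref{lemma: bridge twists} the same holds for every $(r_1,r_2)\in(-1,\infty)^2$.

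These two squares miss exactly the finite multislopes for which one coordinate lies in $[1,\infty)$ and the other in $(\infty,-1]$. To cover them I would build one more branched surface in the spirit of the ``discordant'' cases of the two lemmas, but now pairing a positive river twist with a negative bridge twist: choose an oriented arc $\alpha$ meeting only the river curve $\gamma_{2i}$ and an oriented arc $\beta$ meeting only the bridge curve $\gamma_{2j-1}$, disjoint from one another. As in the earlier proofs, since $\alpha$ (resp. $\beta$) is disjoint from every other twist curve and the remaining factors of $h$ do not affect its image, one gets $h(\alpha)=\tau_{2i}(\alpha)$ and $h(\beta)=\tau_{2j-1}^{-1}(\beta)$. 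Because $n>1$ the surface $S$ is not an annulus, so $S\setminus(\alpha\cup\beta)$ has no disc components and Lemma \ref{lemma laminar} applies to the smoothed branched surface $B=S\cup D_\alpha\cup D_\beta$.

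The point that must be checked by hand is that $B$ has neither sink nor half--sink discs; as in the discordant constructions this reduces, via the cusp--direction convention, to verifying that the complement of $\alpha\cup\beta\cup h(\alpha)\cup h(\beta)$ is connected. Granting this, Corollary \ref{cor: no sink implica taut foliation} produces a taut foliation on $M(r_1,r_2)$ for every multislope realised by $\partial B$. I expect $\alpha$ to yield on its boundary torus the positive train track of Figure \ref{fig: train_track_rivertwists}, realising $(\infty,1)$, while $\beta$ yields on the other boundary torus the negative train track of Figure \ref{fig: river_twist_neg}, realising $(-1,\infty)$; hence $\partial B$ realises all of $(\infty,1)\times(-1,\infty)$, and by the symmetry of two--bridge links also $(-1,\infty)\times(\infty,1)$.

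It remains to confirm the set--theoretic covering. If a finite multislope $(r_1,r_2)$ lies in neither $(\infty,1)^2$ nor $(-1,\infty)^2$, then one coordinate lies in $[1,\infty)$ and the other in $(\infty,-1]$ (a single coordinate cannot lie in both arcs). Since $[1,\infty)\subset(-1,\infty)$ and $(\infty,-1]\subset(\infty,1)$, such a multislope belongs to $(\infty,1)\times(-1,\infty)$ or to $(-1,\infty)\times(\infty,1)$. Thus the four regions exhaust all finite multislopes, proving that every finite surgery on $L$ supports a coorientable taut foliation. The main obstacle is the third step: the figure--dependent verification that the mixed river/bridge branched surface has no sink or half--sink discs, together with the precise identification of its two boundary train tracks.
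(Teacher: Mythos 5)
Your first step coincides with the paper's: the exponent computation gives $k$ positive river twists and $k+1$ negative bridge twists, so Lemma \ref{lemma: river twist}(1) and Lemma \ref{lemma: bridge twists}(1) cover $(\infty,1)^2\cup(-1,\infty)^2$, and your set-theoretic reduction of the remaining multislopes is correct. The gap is in your second step, which you yourself flag as unchecked, and it is not a routine verification. First, the identity $h(\beta)=\tau_{2j-1}^{-1}(\beta)$ is not automatic for an arc meeting only a bridge curve: the bridge twists are applied before the river twists, and $\tau_{2j-1}^{-1}(\beta)$ drags $\beta$ along $\gamma_{2j-1}$, which intersects the adjacent river curves, so the subsequent river twists can act nontrivially; in Lemma \ref{lemma: bridge twists} this is arranged by a careful figure-dependent choice of arcs, and it is not clear that such a choice can be made simultaneously with a disjoint river-twist arc $\alpha$ while keeping the complement free of disc components and the branched surface free of sink discs. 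Second, your identification of the boundary train tracks is a guess: in the paper the discordant bridge-twist construction realises $(0,\infty)\times(\infty,0)$ rather than $(\infty,1)\times(-1,\infty)$, so the realised interval genuinely depends on the arcs and cannot be read off from the sign of the twist alone.

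Most seriously, your uniform construction would have to work for $L(-2,-2,-2)$, and the paper's own proof indicates that direct constructions on the fiber surface break down there: the paper splits into two cases, handling $L\neq L(-2,-2,-2)$ with a two-arc branched surface realising $(\infty,1)\times(0,\infty)$ (which, with the symmetry of two-bridge links, also suffices to cover the missing region), but for $L(-2,-2,-2)$ it abandons the fiber surface of $L$ entirely, realises $L$ as a surgery on one component of a fibered three-component link $\mathcal{L}$, builds two branched surfaces in the exterior of $\mathcal{L}$ realising $(\infty,1)\times(0,\infty)\times(\infty,0)$ and $(\infty,1)^3$, and translates the resulting multislopes back through the change of framing given by \eqref{eq: cambio coefficienti}. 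The fiber surface of $L(-2,-2,-2)$ has genus one with two boundary components and only three twist curves, and the constraints you need (disjoint arcs meeting prescribed single curves, no disc complementary regions, no sink or half sink discs) are exactly what is hard to satisfy there. Without either a verified construction for this smallest case or a separate argument in its place, your proof is incomplete.
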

\begin{proof}
It follows by Lemmas \ref{lemma: river twist} and \ref{lemma: bridge twists} that, as the monodromy of $L$ has (at least) two negative bridge twists and (at least) one positive river twist, then all the surgery coefficients contained in $(-\infty, 1)^2\cup (-1, +\infty)^2$ yield manifolds with co-orientable taut foliations. We recall that these coefficients are associated to the Seifert framing.
We now consider two cases:
\begin{itemize}[leftmargin=*]
\item \emph{$L$ is not the link $L(-2, -2 ,-2)$:} we construct a branched surface $B$ whose boundary train tracks realise all the multislopes in $(-\infty, 1)\times (0, +\infty)$. Two-bridge links are symmetric, hence this will imply the statement.
This branched surface is constructed by considering the arcs $\alpha$ and $\beta$ in Figure \ref{fig: family 1 general} and satisfies the hypotheses of Proposition \ref{prop: no sink implica taut foliation}.  In fact the complement of $\alpha \cup \beta$ in $S$ is not a disc, and there is only one sector of $B$ in $S$. Hence $B$ has no sink discs. We can therefore use $B$ to construct foliations on all the surgeries associated to the multislopes realised by its boundary train tracks: one of these is of type $b)$, and so realises all slopes in $(0,+\infty)$, and the other is of type $a)+c)+d)$ and hence realises all slopes in $(-\infty, 1)$.
\begin{figure}[]
    \centering
    \includegraphics[width=0.7\textwidth]{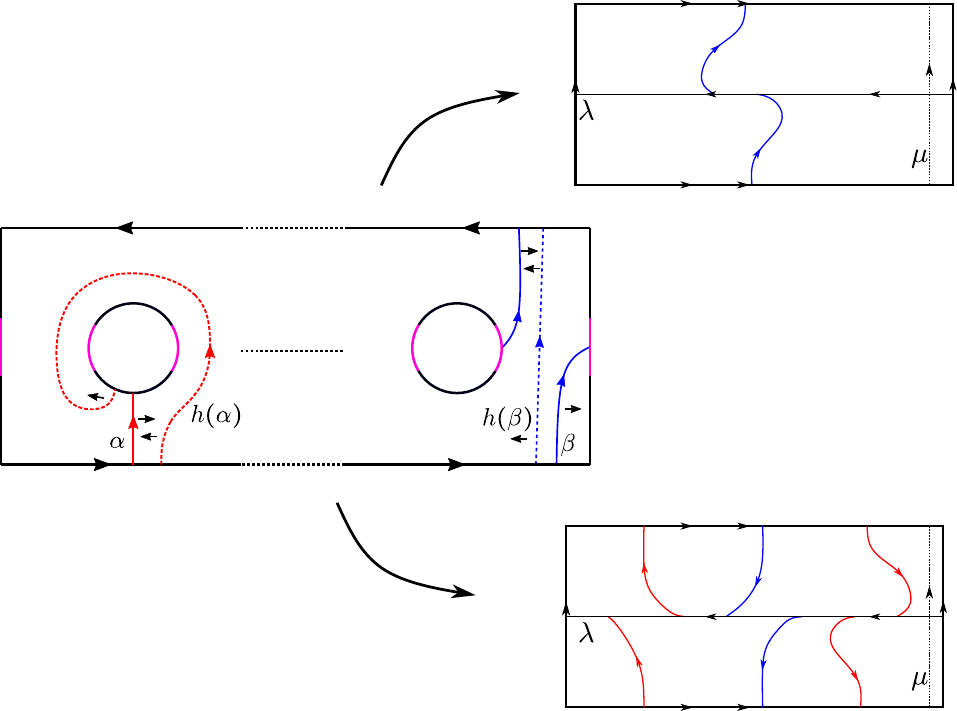}
    \caption{The arcs $\alpha$ and $\beta$ that we consider when $L$ is not the link $L(-2, -2, -2)$, and the boundary train track of the associated branched surface.}
    \label{fig: family 1 general}
\end{figure}

\item \emph{$L=L(-2, -2, -2)$:} to study this case we use an idea that will be useful also later on. We construct taut foliations on all the $(r,s)-$surgeries on $L$, where $r<0$ or $s<0$. This is enough because we already know from Lemma \ref{lemma: bridge twists} that the surgeries associated to $(r,s)\in (-1,+\infty)^2$ contain taut foliations. Observe that $L$ can be described as surgery on a $3$-components link $\mathcal{L}$, as in Figure \ref{fig: family 1 plumbing}. The link $\mathcal{L}$ is also fibered, because it is boundary of a surface obtained via a sequence of Hopf plumbing, as described in Figure \ref{fig: family 1 plumbing}.

\begin{figure}[]
    \centering
    \includegraphics[width=0.8\textwidth]{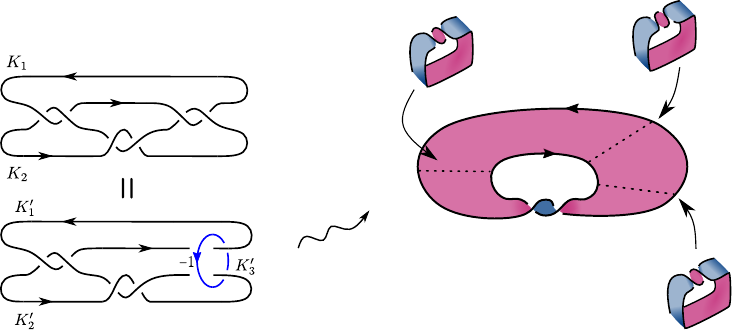}
    \caption{How to obtain the link $L(-2,-2,-2)$ as surgery on a $3$-component link $\mathcal{L}$. We also describe a fiber surface for $\mathcal{L}$, obtained via a sequence of Hopf plumbings.}
    \label{fig: family 1 plumbing}
\end{figure}

Moreover the monodromy of the link $\mathcal{L}$ is given by $h=\tau_4\tau^{-1}_3\tau_2\tau_1^{-1}$, where $\tau_i$ denotes the positive Dehn twist along the curve $c_i$ shown in Figure \ref{fig: family 1 twists}.

\begin{figure}[]
    \centering
    \includegraphics[width=0.4\textwidth]{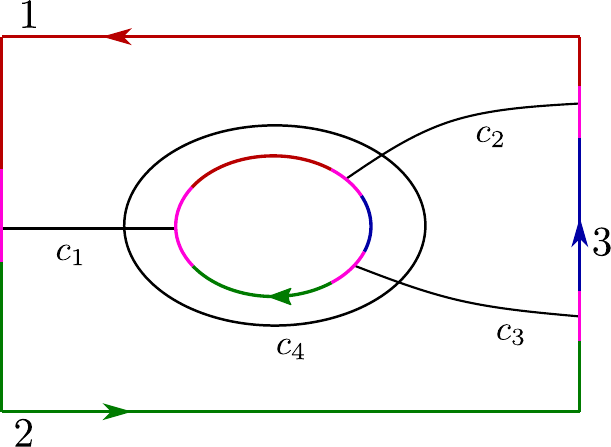}
    \caption{An abstract drawing of the fiber surface for the link $\mathcal{L}$, together with the curves $c_i$'s. We have also coloured the three boundary components of $S$, the boundary component labelled $i$ corresponding to the component $K'_i$ of the link, for $i=1, 2, 3$.}
    \label{fig: family 1 twists}
\end{figure}

This description of $L$ will help us to construct the desired taut foliations. The idea is to find a branched surface in the exterior of $\mathcal{L}$ so that the boundary train tracks realise slope $-1$ on the boundary component associated to $K_3'$. To do this is important to pay attention to how the surgery coefficients change when passing from $\mathcal{L}$ to $L$. Recall that the coefficients of the slopes are written by using the identification given by the Seifert framing. The $(a, b, -1)$-surgery on $\mathcal{L}$ coincides with the $(a-1, b+1)$-surgery on $L$, as the following diagram suggests: 

\begingroup
\mathsurround=0pt
\begin{center}
\begin{tikzcd}
{\overbrace{(a, b, -1)}^{\text{Seifert framing for $\mathcal{L}$}}} \arrow[r, dashed] \arrow[d] & {\overbrace{(a-1, b+1)}^{\text{Seifert framing for $L$}}}              \\
{\underbrace{(a, b+2, -1)}_{\text{Canonical framing for $\mathcal{L}$}}} \arrow[r]               & {\underbrace{(a+1, b+3)}_{\text{Canonical framing for $L$}}} \arrow[u].
\end{tikzcd}
\end{center}
\endgroup

The changes of coefficients indicated by the vertical arrows are a consequence of formula \eqref{eq: cambio coefficienti} and the fact that
\[
\lk(K_1, K_2)=-2, \quad \lk(K_1',K_2')=\lk(K_2', K_3')=-1, \quad\lk(K_1', K_3')=1.
\]
We construct two branched surfaces $B_i$ in the exterior of $\mathcal{L}$, associated to the arcs $\alpha_i, \beta_i$ and $\gamma_i$, for $i=1, 2$, as described in Figure \ref{fig: family 1 exception}. It can be checked by direct inspection that for $i=1, 2$ the complement of $\alpha_i \cup \beta_i \cup \gamma_i$ contains no disc components in $S$, and that there are no sink discs. Hence we can apply Proposition \ref{prop: no sink implica taut foliation} and deduce that these branched surfaces carry laminations that extend to taut foliations on the manifolds obtained by Dehn filling the boundary tori along the multislopes realised by the boundary train tracks. 
First, we consider the boundary train tracks of $B_1$. Notice that the one contained in the boundary component of $M$ labelled with $1$ does not satisfy the hypothesis of Lemma \ref{lemma: unlinked}, and so we cannot use it to compute the slopes it realises. Nonetheless, a direct computation with weight systems shows that it realises all slopes in $(-\infty, 1)$. The two other boundary train tracks of $B_1$ are of type $b)$ and type $c)+d)$ and so realise all slopes in $(0, +\infty)$ and $(-\infty,0)$ on the corresponding boundary components. Summing up, the boundary train tracks of $B_1$ realise all the multislopes in $(-\infty, 1)\times(0, +\infty)\times(-\infty, 0)$.
The boundary train tracks associated to $B_2$ are all of type $a)+d)$ and hence realise all multislopes $(-\infty, 1)^3$. 
In particular, we have taut foliations on $S^3_{r,s,-1}(\mathcal{L})=S^3_{r-1,s+1}(L)$ for all $(r,s)\in (-\infty, 1)\times \matR$.
\end{itemize}

\begin{figure}[]
    \centering
    \includegraphics[width=0.82\textwidth]{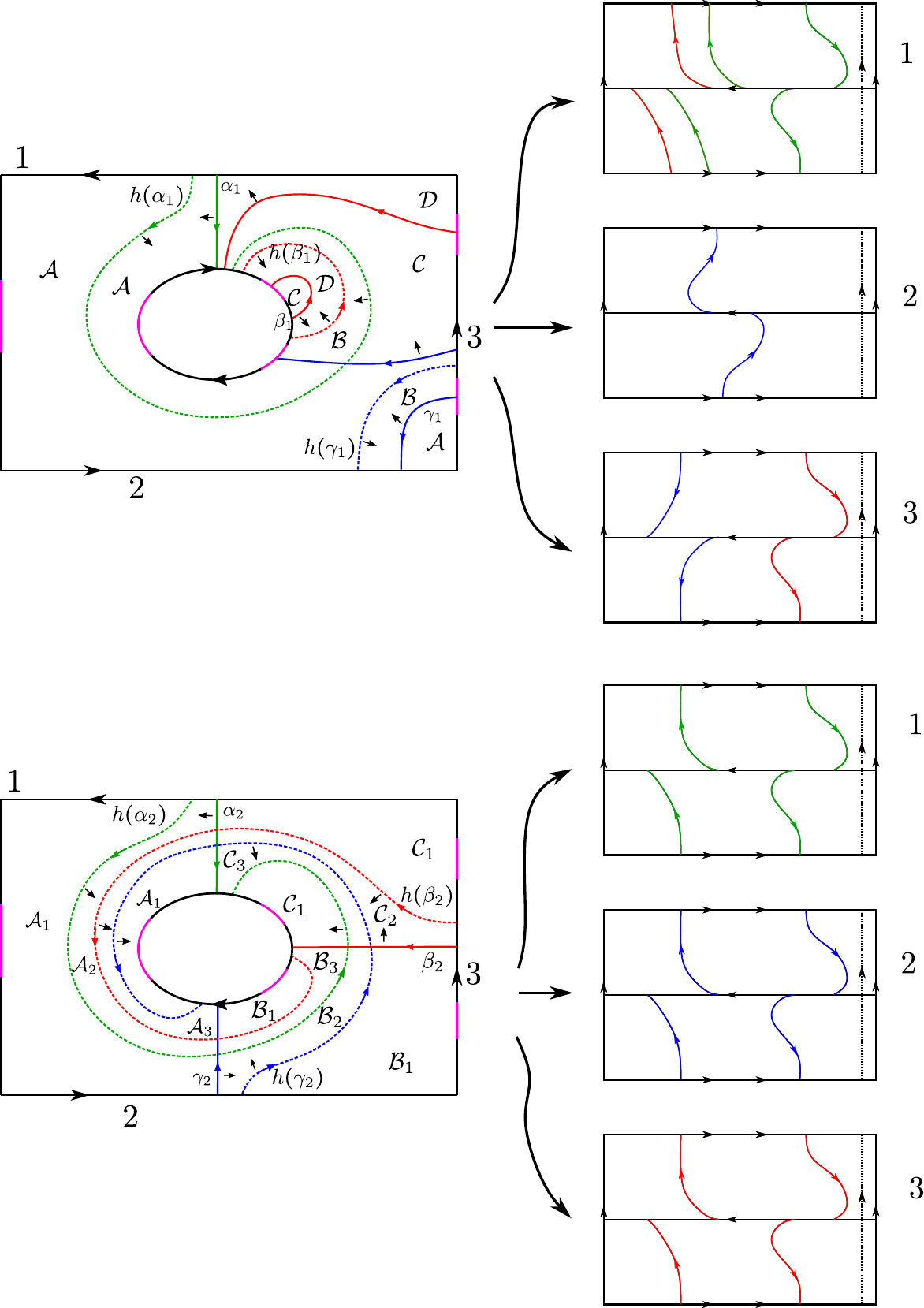}
    \caption{The arcs $\alpha_i, \beta_i, \gamma_i$ and their images via the monodromy $h$, together with the cusp directions of the associated branched surfaces and their boundary train tracks. We have also indicated the sectors of the branched surfaces in $S$.}
    \label{fig: family 1 exception}
\end{figure}

Since $L$ is symmetric, and since all multislopes in $(-1, +\infty)^2$ are covered by Lemma \ref{lemma: bridge twists}, the statement follows.
\end{proof}

\subsection{Study of the links in Family 4}\label{subsec: family 4}
We now focus our attention on the links of \emph{Family 4}, i.e. on the links of the form $L=L(2b_1,-2, 2b_3, \dots, -2, 2b_m)$ where exactly one $b_i$ is $-1$ and all the others are equal to $1$. 
We first study the case when $b_i=1$ for some $i\ne 1,m$. We write $m=2n+1$ for some positive integer $n$.
\begin{lemma}\label{lemma: riscrittura link}
Let $L=L(2b_1,-2, 2b_3, \dots, -2, 2b_m)$ where exactly one $b_{2k+1}$ is $-1$ and all the others are equal to $1$ and suppose that $2k+1\ne 1, m$. Then $L$ is isotopic as unoriented link to $L(-2k, -2, 2, -2, -2h)$, where $h=n-k$.
\end{lemma}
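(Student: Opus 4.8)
The plan is to prove the isotopy by comparing the two links directly through Schubert's classification, Theorem \ref{teo: classification two-bridge links}. Concretely, I would compute the pair $(p,q)$ attached to the continued fraction of $L=L(2b_1,-2,\dots,-2,2b_m)$ and the pair $(p',q')$ attached to $L(-2k,-2,2,-2,-2h)$, and then verify that $p=p'$ together with one of Schubert's congruences modulo $2p$. Since the statement only asks for an isotopy of \emph{unoriented} links, it is enough to establish any one of the relations $q'\equiv q^{\pm1}\pmod{2p}$, $q'\equiv q+p\pmod{2p}$, or $qq'\equiv 1+p\pmod{2p}$, the last two of which correspond to isotopy after reversing the orientation of one component.

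To compute these fractions I would use the standard encoding of a continued fraction $[c_1,\dots,c_m]$ by the matrix product $\prod_{i=1}^m\begin{pmatrix} c_i & 1\\ 1 & 0\end{pmatrix}=\begin{pmatrix} p & *\\ q & *\end{pmatrix}$. The observation that makes the source tractable is its block structure: unwinding the hypothesis, the coefficient vector of $L$ is the concatenation of an alternating block $(2,-2,2,-2,\dots,2,-2)$ of length $2k$, a single entry $-2$, and an alternating block $(-2,2,\dots,-2,2)$ of length $2h$, with $h=n-k$. Hence the associated matrix factors as $P^k\,R\,Q^h$, where $P=\begin{pmatrix}2&1\\1&0\end{pmatrix}\begin{pmatrix}-2&1\\1&0\end{pmatrix}$, $R=\begin{pmatrix}-2&1\\1&0\end{pmatrix}$, and $Q=\begin{pmatrix}-2&1\\1&0\end{pmatrix}\begin{pmatrix}2&1\\1&0\end{pmatrix}$. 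Both $P$ and $Q$ have trace $-2$ and determinant $1$, so each equals $-I$ plus a square-zero matrix; this yields the closed forms $P^k=(-1)^k\begin{pmatrix}1+2k&-2k\\2k&1-2k\end{pmatrix}$ and $Q^h=(-1)^h\begin{pmatrix}1+2h&2h\\-2h&1-2h\end{pmatrix}$, which I would multiply out to read off $p=16kh+6k+6h+2$ and $q=16kh+6k-2h-1$ after fixing the overall sign so that $p>0$.

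For the target the product has only five factors, so I would simply expand $\begin{pmatrix}-2k&1\\1&0\end{pmatrix}\begin{pmatrix}-2&1\\1&0\end{pmatrix}\begin{pmatrix}2&1\\1&0\end{pmatrix}\begin{pmatrix}-2&1\\1&0\end{pmatrix}\begin{pmatrix}-2h&1\\1&0\end{pmatrix}$ to obtain $p'=16kh+6k+6h+2$ and $q'=-8h-3$. Comparing the two computations gives $p=p'$ at once, and the decisive point is the identity $q-q'=(16kh+6k-2h-1)-(-8h-3)=16kh+6k+6h+2=p$, that is, $q'\equiv q+p\pmod{2p}$. By Theorem \ref{teo: classification two-bridge links} this means that $L$ and $L(-2k,-2,2,-2,-2h)$ coincide up to reversing the orientation of one component, which is precisely an isotopy of unoriented links.

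The routine but error-prone part, and the step I expect to be the main obstacle, is the bookkeeping in the two matrix computations: correctly reading off the block lengths $2k$ and $2h$ from the position of the unique $-1$, getting the nilpotent closed forms for $P^k$ and $Q^h$ right, and tracking the global sign so as to normalise to the convention $p>0$, $0<|q|<p$ with $q$ odd. I would guard against slips by checking the smallest case $n=2$ (so $k=h=1$), where $L=L(2,-2,-2,-2,2)$ should give $(p,q)=(30,19)$ and the target $L(-2,-2,2,-2,-2)$ should give $(p',q')=(30,-11)$, confirming $q-q'=30=p$.
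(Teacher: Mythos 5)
Your proposal is correct and follows essentially the same route as the paper: compute the fractions $(p,q)$ and $(p',q')$ of both links, observe $p=p'$ and $q-q'=p$, and conclude via the congruence $q'\equiv q+p \pmod{2p}$ in Schubert's theorem (reversal of orientation of one component, hence an unoriented isotopy). The only difference is organizational — the paper derives $\alpha_{k,h}=16kh+6k+6h+2$ and $\beta_{k,h}=16kh+6k-2h-1$ by induction on $k$ after first evaluating the tail fraction $\tfrac{2h+1}{-2h}$, whereas you get the same values in closed form from the nilpotent-plus-$(-I)$ structure of the period matrices; both computations check out, as does your $k=h=1$ sanity check.
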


\begin{proof}
We will prove this algebraically. We start by computing the fraction associated to the link $L(-2k, -2, 2, -2, -2h)$. We have
\begin{align*}
-2k+\cfrac{1}{-2+\cfrac{1}{2+\cfrac{1}{-2-\cfrac{1}{2h}}}}&= -2k+\cfrac{1}{-2+\cfrac{1}{2-\cfrac{2h}{4h+1}}}=-2k+\cfrac{1}{-2+\cfrac{4h+1}{6h+2}}=\\
&=-2k+\frac{6h+2}{-(8h+3)}=\frac{16kh+6k+6h+2}{-(8h+3)}
\end{align*}
and this implies  $L(-2k, -2, 2, -2, -2h)=b(16kh+6k+6h+2, -(8h+3))$, where $b(p,q)$ denotes the two-bridge link associated to the rational $\frac{p}{q}$.

We now study the fraction corresponding to $L$. This fraction depends on $k$ and $n$, or equivalently on $k$ and $h=n-k$, and we denote its reduced representative by $\frac{\alpha_{k,h}}{\beta_{k,h}}$. Then we have

$$
\frac{\alpha_{k,h}}{\beta_{k,h}}=2+\cfrac{1}{-2+\cfrac{1}{2+\cfrac{1}{\ddots+\cfrac{1}{-2+\cfrac{1}{\textcolor{red}{-2}+\cfrac{q_h}{p_h}}}}}}
$$
where we have coloured the $-2$ corresponding to $2b_{2k+1}$, and where $\frac{p_h}{q_h}$ is defined in the following way
$$
\frac{p_h}{q_h}=\overbrace{-2+\cfrac{1}{2+\cfrac{1}{-2+\cfrac{1}{\ddots+\cfrac{1}{2}}}}}^{\text{length $2h$}}.
$$
One can check by induction on $h\geq 1$ that $\frac{p_h}{q_h}=\frac{2h+1}{-2h}$, and hence $p_h=2h+1$ and $q_h=-2h$. 

We now prove by induction on $k$ that 
\begin{align*}
\alpha_{k,h}=16kh+6k+6h+2\\
\beta_{k,h}=16kh-2h+6k-1 
\end{align*}
for every $h$.
\begin{itemize}[leftmargin=*]
    \item \emph{Case $k=1$:} we have the equality
    $$
    \frac{\alpha_{1,h}}{\beta_{1,h}}=2+\cfrac{1}{-2+\cfrac{1}{-2+\cfrac{q_h}{p_h}}}=2+\cfrac{1}{-2-\cfrac{1+2h}{6h+2}}=2-\frac{6h+2}{14h+5}=\frac{22h+8}{14h+5}.
    $$
    Since $(22h+8,14h+5)= (q_h, p_h)=1$ we deduce that $\alpha_{1,h}=22h+8$ and $\beta_{1,h}=14h+5$.
    \item \emph{Case $k>1$:} we can use the following equality
    $$
    \frac{\alpha_{k,h}}{\beta_{k,h}}=2+\cfrac{1}{-2+\cfrac{\beta_{k-1,h}}{\alpha_{k-1,h}}}=2+\cfrac{\alpha_{k-1,h}}{-2\alpha_{k-1,h}+\beta_{k-1,h}}=\frac{3\alpha_{k-1,h}-2\beta_{k-1,h}}{2\alpha_{k-1,h}-\beta_{k-1,h}}
    $$
    and the fact that 
    $$(3\alpha_{k-1,h}-2\beta_{k-1,h},2\alpha_{k-1,h}-\beta_{k-1,h})=(\alpha_{k-1,h},\beta_{k-1,h})=1
    $$
    to deduce that $\alpha_{k,h}=3\alpha_{k-1,h}-2\beta_{k-1,h}$ and that $\beta_{k,h}=2\alpha_{k-1,h}-\beta_{k-1,h}$. Therefore we have
    \begin{align*}
    &\alpha_{k,h}-\beta_{k,h}=\alpha_{k-1,h}-\beta_{k-1,h}=8h+3\\
    &\alpha_{k,h}-\alpha_{k-1,h}=2(\alpha_{k-1,h}-\beta_{k-1,h})=16h+6.
    \end{align*}
    These equalities imply 
    \begin{align*}
    \alpha_{k,h}&=\alpha_{k-1,h}+16h+6=16kh+6k+6h+2
    \\
    \beta_{k,h}&=\alpha_{k,h}-8h-3=16kh-2h+6k-1
    \end{align*}
    and this proves the claim.
\end{itemize}
To conclude the proof of the lemma we just have to recall from Theorem \ref{teo: classification two-bridge links} that if $\beta'\equiv
\alpha+\beta \mod 2\alpha$ then the links $b(\alpha,\beta)$ and $b(\alpha,\beta')$ are isotopic after reversing the orientation of one of the components. In the case of our interest we have  $$\alpha_{k,h}+\beta_{k,h}\equiv-\alpha_{k,h}+\beta_{k,h}\equiv -(8h+3) \mod 2\alpha_{k,h}
$$
and this is exactly what we wanted.
\end{proof}

The description given by the previous lemma allows us to prove:

\begin{prop}
Let $L=L(2b_1,-2, 2b_3, \dots, -2, 2b_m)$ where exactly one $b_{2k+1}$ is $-1$ and all the others are equal to $1$ and suppose that $2k+1\ne 1, m$. Then all the rational Dehn surgeries on $L$ support co-orientable taut foliations.
\end{prop}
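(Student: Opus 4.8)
The plan is to combine the general results already proved with a dedicated construction for the surgeries they leave uncovered, exploiting the rewriting of Lemma \ref{lemma: riscrittura link}. First I would apply Lemmas \ref{lemma: river twist} and \ref{lemma: bridge twists} to the description $L=L(2b_1,-2,2b_3,\dots,-2,2b_m)$. Since all river twists are positive and, because $2k+1\ne 1,m$, there are at least two positive bridge twists (those at positions $1$ and $m$) alongside the single negative one at position $2k+1$, part (1) of both lemmas yields coorientable taut foliations for every multislope in $(\infty,1)^2$, while part (2) of Lemma \ref{lemma: bridge twists} (bridge twists of different signs) yields them for every multislope in $\big((0,\infty)\times(\infty,0)\big)\cup\big((\infty,0)\times(0,\infty)\big)$. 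A direct inspection shows that the finite multislopes still uncovered are exactly those with both coefficients non-negative and at least one $\geq 1$; by the symmetry of two-bridge links it then suffices to realise foliations on a region of the form $[1,\infty)\times[0,\infty)$.

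To handle this region I would pass to the description $L\cong L(-2k,-2,2,-2,-2h)$ and exhibit $L$ as a Dehn filling of a fixed fibered link $\mathcal{L}$, exactly in the spirit of the treatment of $L(-2,-2,-2)$ and of the links $L_n$. The two end twist regions $-2k$ and $-2h$ are produced by surgeries of coefficient $-\tfrac1k$ and $-\tfrac1h$ (up to the usual sign conventions) on two unknotted components $C_1,C_2$ encircling parallel strands, so that $\mathcal{L}=C_1\cup K_A\cup K_B\cup C_2$ is independent of $k$ and $h$. The link $\mathcal{L}$ is fibered, being bounded by an explicit plumbing of Hopf bands, and I would read off its monodromy as a product of Dehn twists just as in Section \ref{subsec: Fibered hyp two-bridge links}.

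Next I would build branched surfaces in the exterior of $\mathcal{L}$ following Section \ref{subsec: constructing branched surfaces}: choosing oriented arcs $\alpha,\beta,\gamma,\dots$ adapted to the monodromy, attaching cooriented discs, and checking by direct inspection of the cusp directions that no sink discs and no half sink discs appear, so that Lemma \ref{lemma laminar} and Corollary \ref{cor: no sink implica taut foliation} apply. I would then compute the boundary train tracks and their realisable slopes, arranging that on $C_1,C_2$ the realised intervals contain all the values $-\tfrac1k,-\tfrac1h$ for every $k,h\geq 1$ (equivalently, neighbourhoods of the relevant accumulation slope), while on $K_A,K_B$ the realised slopes sweep out a full interval. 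Fixing the $C_1,C_2$ coordinates to the values that produce $L(-2k,-2,2,-2,-2h)$ and translating the remaining two coordinates back to slopes on $L$ through the framing-change formula \eqref{eq: cambio coefficienti} and the surgery description — a commutative diagram of the same shape as in the $L(-2,-2,-2)$ case — should deliver taut foliations on all the surgeries in the missing region.

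The main obstacle will be the bookkeeping of surgery coefficients: I must track simultaneously the Seifert versus canonical framings on $\mathcal{L}$ and on $L$, the affine change produced by absorbing the $C_1,C_2$ surgeries into the twist regions, and the linking numbers entering \eqref{eq: cambio coefficienti}, and then verify that after all these identifications the realised region genuinely contains the complement of $(\infty,1)^2\cup\big((0,\infty)\times(\infty,0)\big)\cup\big((\infty,0)\times(0,\infty)\big)$ uniformly in $k,h\geq 1$. The second delicate point is the purely combinatorial check, case by case on the chosen arcs, that no sink or half sink discs occur in the several branched surfaces required. Once these two points are settled, the symmetry of two-bridge links closes the argument and gives taut foliations on every finite surgery on $L$.
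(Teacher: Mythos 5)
Your proposal is correct and follows essentially the same route as the paper: rewrite $L$ as $L(-2k,-2,2,-2,-2h)$ via the preceding lemma, realise it as a Dehn filling of a fixed fibered $4$-component link $\mathcal{L}$, build sink-disc-free branched surfaces from arcs in the fiber of $\mathcal{L}$, and translate the realised boundary multislopes back through the framing-change formula and the symmetry of two-bridge links. The one difference is that your preliminary reduction via the river/bridge twist lemmas is unnecessary (and imports a framing-translation issue between the two continued-fraction descriptions): the paper's two branched surfaces on $\mathcal{L}$ already realise $(0,\infty)\times\matR\times(\infty,0)\times(\infty,0)$ and $(\infty,1)^4$, which after filling the two auxiliary components along $-\tfrac1k,-\tfrac1h$ cover \emph{all} finite multislopes on $L_{k,h}$ once symmetry is invoked. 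Note also that the substantive content — the explicit choice of arcs, the verification that no sink or half sink discs occur, and the computation of the realised slope intervals — is exactly what your outline defers, so the plan is sound but the work remains in those checks.
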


\begin{proof}
By virtue of Lemma \ref{lemma: riscrittura link} it is equivalent to study surgeries on links of the form $L_{k,h}=L(-2k, -2, 2, -2, -2h)$ where $h>0$ and $k>0$. These links can be obtained as surgeries on a $4$-component fibered link $\mathcal{L}$, as described in Figure \ref{fig:4-components plumbing}. 
Our aim now is to construct foliations on enough surgeries on $\mathcal{L}$.

\begin{figure}[]
    \centering
    \includegraphics[width=1\textwidth]{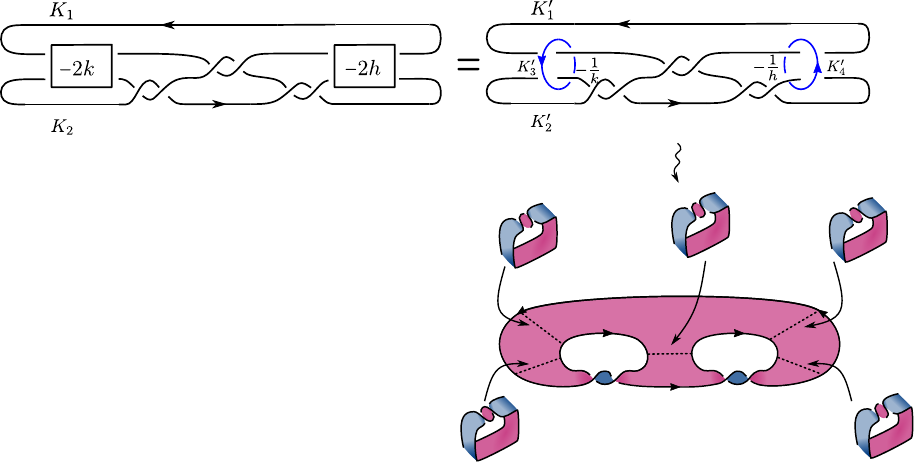}
    \caption{How to obtain the link $L(-2k,-2, 2,-2, -2h)$ as surgery on a $4$-component link $\mathcal{L}$. We also describe a fiber surface for $\mathcal{L}$, obtained as a sequence of Hopf plumbings.}
    \label{fig:4-components plumbing}
\end{figure}

The monodromy of the link $\mathcal{L}$ is given by $h=\tau_5\tau_3\tau_7\tau_6^{-1}\tau_4\tau_2\tau_1^{-1}$, where $\tau_i$ denotes the positive Dehn twist along the curve $c_i$ shown in Figure \ref{fig: 4-component twists}. If we label the components of $L$ and $\mathcal{L}$ as described in Figure \ref{fig:4-components plumbing}, the surgery coefficients change in the following way

\begingroup
\mathsurround=0pt
\begin{center}
    
\begin{tikzcd}
{\overbrace{(a, b, -\frac{1}{k}, -\frac{1}{h})}^{\text{Seifert framing for $\mathcal{L}$}}} \arrow[r, dashed] \arrow[d] & {\overbrace{(a, b)}^{\text{Seifert framing for $L$}}}              \\
{\underbrace{(a-1, b-1, -\frac{1}{k}, -\frac{1}{h}))}_{\text{Canonical framing for $\mathcal{L}$}}} \arrow[r]               & {\underbrace{(a-1+k+h, b-1+k+h)}_{\text{Canonical framing for $L$}}} \arrow[u].
\end{tikzcd}
\end{center}
\endgroup

As usual, when constructing foliations it is more natural to work with the framings given by the Seifert surfaces.

\begin{figure}[]
    \centering
    \includegraphics[width=0.5\textwidth]{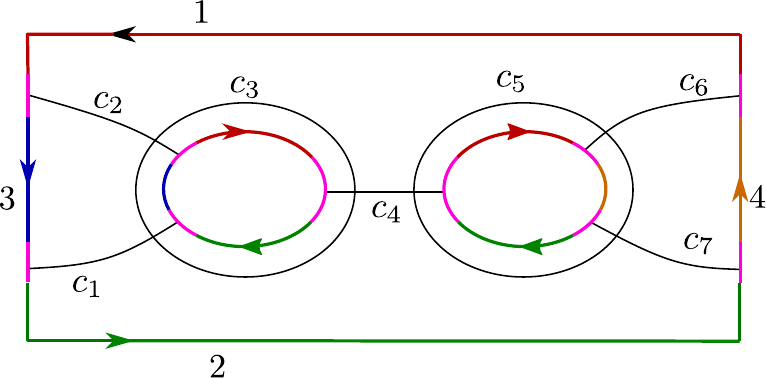}
    \caption{An abstract drawing of the fiber surface for the link $\mathcal{L}$, together with the curves $c_i$'s. We have also coloured the boundary components of $S$, the one labelled with $i$ corresponding to the component $K_i'$ of the link, for $i=1, 2, 3, 4$.}
    \label{fig: 4-component twists}
\end{figure}

We construct two branched surfaces in the exterior of $\mathcal{L}$. The first one is associated to the arcs $\alpha,\beta,\gamma,\delta,\epsilon$ depicted in Figure \ref{fig:4-components branched surface}, where we also describe the types of the boundary train tracks. The complement of these arcs in the fiber surface is not a disc (it is easier to see this by considering the complement of the images of these arcs via the diffeomorphism $h$) and the branched surface does not contain sink discs. In fact, there are five sectors in $S$, labelled with capital letters in Figure \ref{fig:4-components branched surface} and none of them is a sink disc. Therefore we can apply Proposition \ref{prop: no sink implica taut foliation} and deduce that there exist taut foliations on all the surgeries on $\mathcal{L}$ corresponding to multislopes in  $(0, +\infty)\times\matR\times(-\infty, 0)\times(-\infty,0)$.

\begin{figure}[]
    \centering
    \includegraphics[width=1\textwidth]{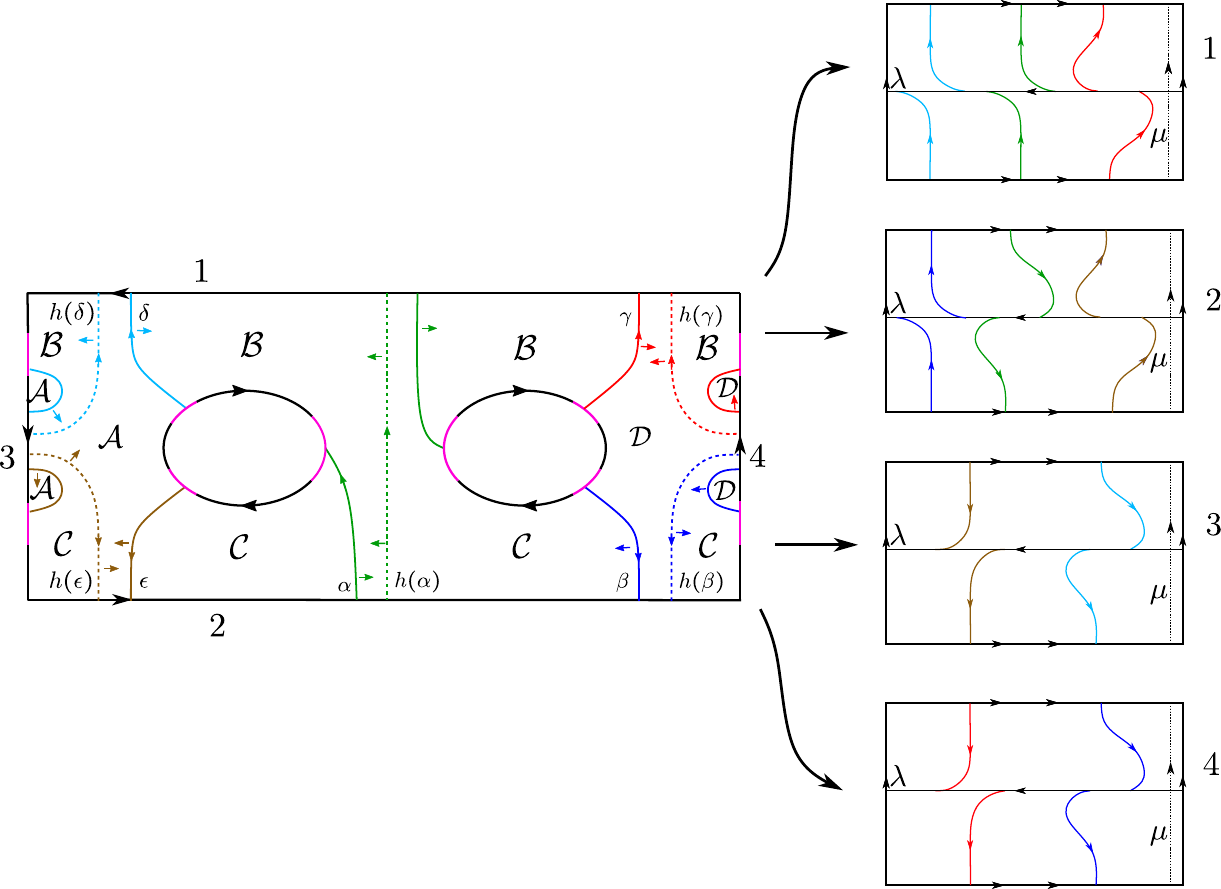}
    \caption{The arcs $\alpha,\beta,\gamma,\delta,\epsilon$ and the boundary train tracks of the associated branched surface.}
    \label{fig:4-components branched surface}
\end{figure}

The second branched surface is the one associated to the arcs described in Figure \ref{fig:4-components branched surface 1}. In this case all the boundary train tracks are of type $a)+d)$ and hence we are able to construct foliations on the surgeries corresponding to multislopes in $(-\infty, 1)^4$.

\begin{figure}[]
    \centering
    \includegraphics[width=0.65\textwidth]{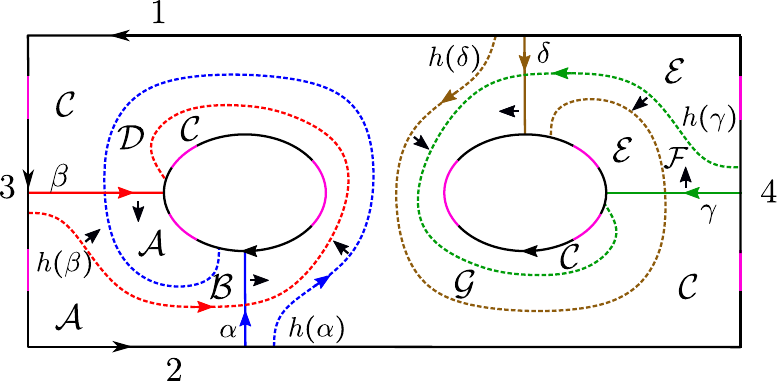}
    \caption{The arcs $\alpha, \beta, \gamma, \delta$ used to construct the second branched surface. We have also labelled the sector contained in $S$ and one can check that none of them is a sink disc.}
    \label{fig:4-components branched surface 1}
\end{figure}

This implies that for every $k>0$ and $h>0$ all the surgeries on the link $L_{k, h}$ corresponding to multislopes in $(0, +\infty)\times \matR$ and in $(-\infty, 1)^2$ support a co-orientable taut foliation. The conclusion follows using the fact that all these links are symmetric.  

\end{proof}

Now we only have to study the links $L=L(2b_1,-2, 2b_3, \dots, -2, 2b_{2n+1})$ where $b_1=-1$ and all the other $b_i$'s are $1$, or where $b_{2n+1}=-1$ and all the other $b_i$'s are $1$. The link $L(a_1, a_2, \dots, a_{2n+1})$ is isotopic to $L(a_{2n+1}, \dots, a_2, a_1)$, so we can reduce our study to the case when $b_{2n+1}=-1$ and we denote the corresponding link by $L_n$.

\begin{lemma} \label{lemma: altra rappresentazione di Ln}
The link $L_n$ is isotopic as unoriented link to the link $L(2, -2, -2n)$, illustrated in Figure \ref{figure:link L_n}.
\end{lemma}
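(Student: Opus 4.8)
The plan is to follow the algebraic strategy of Lemma~\ref{lemma: riscrittura link}: I would compute the rational number attached to each of the two links through the continued fraction \eqref{eq:continued fraction}, and then compare the resulting two-bridge invariants by means of Schubert's classification (Theorem~\ref{teo: classification two-bridge links}). With the prescribed coefficients, $L_n$ is the two-bridge link $L(2,-2,2,-2,\dots,2,-2,-2)$, whose defining continued fraction has length $2n+1$ and consists of the block $2,-2$ repeated $n$ times followed by a final $-2$.

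First I would pin down the fraction $\frac{p_n}{q_n}$ of $L_n$ by induction on $n$. Deleting the first two entries $2,-2$ from the length-$(2n+1)$ expansion leaves exactly the length-$(2n-1)$ expansion of $L_{n-1}$, so that
\[
\frac{p_n}{q_n}=2+\cfrac{1}{-2+\cfrac{q_{n-1}}{p_{n-1}}}.
\]
Substituting the inductive hypothesis $\frac{p_{n-1}}{q_{n-1}}=\frac{6n-4}{6n-7}$ and simplifying yields $\frac{p_n}{q_n}=\frac{6n+2}{6n-1}$; since $\gcd(6n+2,6n-1)=\gcd(3,6n-1)=1$, this fraction is reduced, with $6n+2$ even and $6n-1$ odd, as the normal form demands. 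The base case is the direct check $[2,-2,-2]=\frac{8}{5}$. Hence $L_n=b(6n+2,\,6n-1)$. A shorter direct computation gives
\[
2+\cfrac{1}{-2+\cfrac{1}{-2n}}=\frac{6n+2}{4n+1},
\]
so that $L(2,-2,-2n)=b(6n+2,\,4n+1)$.

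It then remains to compare $b(6n+2,6n-1)$ and $b(6n+2,4n+1)$. Writing $p=6n+2$, $q=6n-1$ and $q'=4n+1$, I would invoke the orientation-reversal clause of Theorem~\ref{teo: classification two-bridge links}: a direct expansion shows
\[
qq'=(6n-1)(4n+1)=24n^2+2n-1=(1+p)+(2n-1)(2p)\equiv 1+p \pmod{2p}.
\]
Therefore $L_n$ and $L(2,-2,-2n)$ are isotopic after reversing the orientation of one of their components, which is precisely an isotopy of unoriented links, as claimed.

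I expect the only genuinely delicate point to be that one must use the orientation-reversal part of Schubert's theorem and not its orientation-preserving part: neither $q'\equiv q$ nor $q'\equiv q^{-1}\pmod{2p}$ holds (for instance at $n=2$ one has $p=14$, $q=11$, $q'=9$, while $q^{-1}\equiv 23\pmod{28}$), so the conclusion truly depends on regarding the links as unoriented, consistently with the convention fixed in the introduction. The continued-fraction induction is otherwise routine bookkeeping, the only care being to keep each fraction reduced and with the correct signs so that the normal form $b(p,q)$ with $p>0$, $q$ odd and $0<|q|<p$ is respected throughout.
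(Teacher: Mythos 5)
Your proof is correct and follows essentially the same route as the paper: compute the continued-fraction invariants of both links ($b(6n+2,6n-1)$ and $b(6n+2,4n+1)$) and invoke Schubert's classification. The only cosmetic difference is that you verify the single congruence $qq'\equiv 1+p \pmod{2p}$ directly, whereas the paper passes through the intermediate fraction $-(2n+1)$ using the clauses $q'\equiv q+p$ and $q'\equiv q^{-1}$ separately; the two computations are equivalent.
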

\begin{proof}
We compute the fractions associated to these links. The one associated to $L(2, -2, -2n)$ is $\cfrac{6n+2}{4n+1}$. Therefore by Theorem \ref{teo: classification two-bridge links} the link $L(2, -2, -2n)$ is isotopic, after reversing the orientation of one of the components, to the link defined by the fraction $\cfrac{6n+2}{-(2n+1)}$.

The fractions $\cfrac{p_n}{q_n}$ associated to $L_n$ satisfy the following recursive equation
\begin{equation}\label{eq: recursive fraction}
\frac{p_n}{q_n}=2+\cfrac{1}{-2+\cfrac{q_{n-1}}{p_{n-1}}}=2+\cfrac{p_{n-1}}{-2p_{n-1}+q_{n-1}}=\cfrac{3p_{n-1}-2q_{n-1}}{2p_{n-1}-q_{n-1}}.
\end{equation}
Let us find an explicit formula for $p_n$ and $q_n$. It follows from Equation \eqref{eq: recursive fraction} that $$
p_n-q_n=p_{n-1}-q_{n-1}
$$
and as a consequence the quantity $p_i-q_i$ does not depend on the index $i$. Moreover, Equation \eqref{eq: recursive fraction} also implies 
$$
p_n-p_{n-1}=q_n-q_{n-1}=2(p_{n-1}-q_{n-1})
$$
and therefore also the quantity $p_i-p_{i-1}=q_i-q_{i-1}$ is constant in $i$. As when $n=1$ we have $\cfrac{p_1}{q_1}=\cfrac{8}{5}$, we deduce by induction that $p_n=8+(n-1)6=6n+2$ and $q_n=5+(n-1)6$. To conclude the proof is enough to observe that $-(2n+1)\equiv q_n^{-1} \mod 2p_n$ and use again Theorem \ref{teo: classification two-bridge links}.
\end{proof}

We will prove in the next section (see Proposition \ref{prop: Ln is L-space link}) that when $r\geq n$, $s\geq n$ the $(r,s)$-surgery on $L_n$ is an $L$-space, where the surgery coefficients are to be considered in the \emph{canonical} framing. We now prove that all the other (rational) surgeries on $L_n$ support co-orientable taut foliations.

\begin{prop}\label{prop: ctf on Ln}
Let $L_n=L(2b_1, -2, 2b_3, \dots, -2, 2b_{2n+1})$, where $b_{2n+1}=-1$ and all the other $b_i$'s are $1$, and let $r< n$, $s< n$ be rational numbers. Then the $(r,s)$-surgery on $L_n$ supports a co-orientable taut foliation, where the surgery coefficients are considered in the canonical framing.
\end{prop} 

\begin{proof}
We can suppose that $n\geq 2$, because $L_1$ is the Whitehead link, for which the corresponding result was proved in \cite{S}.  By Lemma \ref{lemma: altra rappresentazione di Ln} we have $L_n=L(2, -2, -2n)$ as unoriented links and by using this representation it is evident that $L_n=S^3_{\bullet, \bullet, -\frac{1}{n}}(\mathcal{L})$, where $\mathcal{L}$ is drawn in Figure \ref{fig: Ln plumbing}. This figure also shows a fiber surface $S$ for $\mathcal{L}$ obtained via a sequence of four Hopf plumbings.

\begin{figure}[]
    \centering
    \includegraphics[width=1\textwidth]{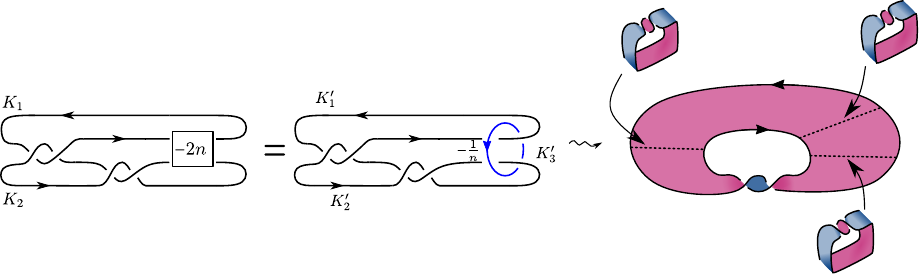}
    \caption{How to obtain the links $\{L_n\}_{n\geq 1}$ as surgery on a $3$-components link $\mathcal{L}$ and a fiber surface $S$ for $\mathcal{L}$.}
    \label{fig: Ln plumbing}
\end{figure}

We choose four triples $\alpha,\beta,\gamma$ of oriented arcs in $S$ and consider the four branched surfaces in the exterior of $\mathcal{L}$ associated to these arcs, as depicted in Figure \ref{fig: Ln branched surface_1} and Figure \ref{fig: Ln branched surface 2}. Each of these triples has the property that its complement in $S$ contains no disc components.

\begin{figure}[]
    \centering
    \includegraphics[width=0.8\textwidth]{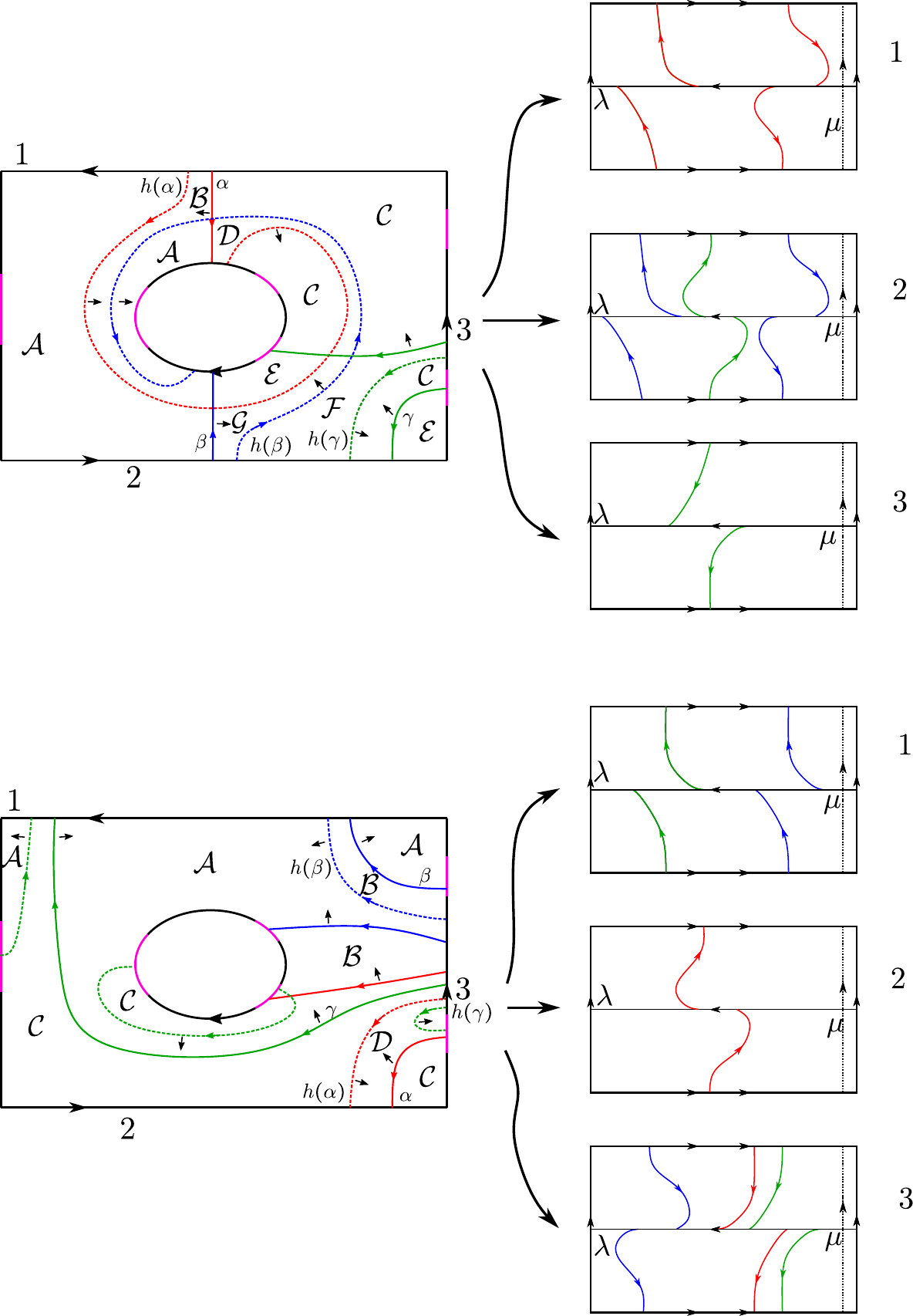}
    \caption{How to choose two of the four triples of arcs $\alpha,\beta, \gamma$. The picture also represents their images via the monodromy, the boundary train tracks, and the sectors in $S$.}
    \label{fig: Ln branched surface_1}
\end{figure}

Moreover, the figures also illustrate the sectors of the branched surfaces contained in $S$ and it can be checked that none of these is a sink disc. Thus, thanks to Proposition \ref{prop: no sink implica taut foliation} we only need to study the boundary train tracks of these branched surfaces in order to construct the desired taut foliations. The multislopes realised by these branched surfaces in the Seifert framing of $\mathcal{L}$ are, respectively:
\begin{itemize}
\item all the multislopes in $(-\infty, 1) \times \matR \times (-1, 0)$;
\item all the multislopes in $(0, 2) \times (0, +\infty) \times (-\infty, 0)$;
\item all the multislopes in $(0, 2) \times (-\infty, 0)\times (-1, 0)$;
\item all the multislopes in $(-\infty, 2) \times (-1, 1) \times (-1, 0)$.
\end{itemize}

\begin{figure}[]
    \centering
    \includegraphics[width=0.8\textwidth]{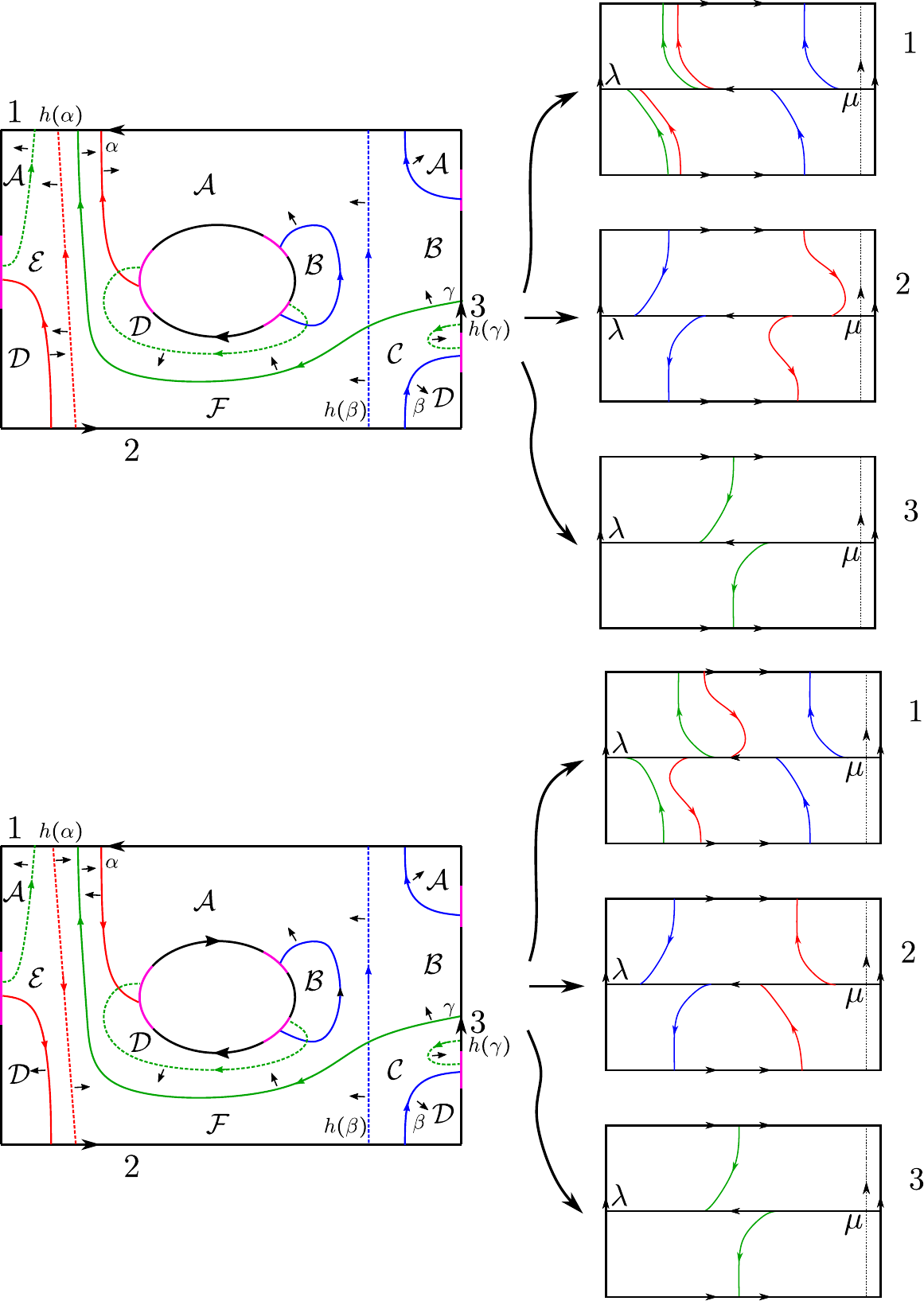}
    \caption{How to choose the two other triples of arcs $\alpha,\beta, \gamma$. The picture also represents their images via the monodromy, the boundary train tracks, and the sectors in $S$.}
    \label{fig: Ln branched surface 2}
\end{figure}

We now prove that by considering $L_n$ as $-\frac{1}{n}$ surgery on the third component of $\mathcal{L}$, we have constructed the desired foliations on the surgeries on $L_n$. First of all we observe that  
$$
\lk(K_1',K_2')=\lk(K_1', K_3')=1, \quad\lk(K_2', K_3')=-1
$$
and by using formula \eqref{eq: cambio coefficienti} we deduce the following change of surgery coefficients:
$$
{\overbrace{(a, b, -\frac{1}{n})}^{\text{Seifert framing for $\mathcal{L}$}}} \rightarrow \overbrace{(a-2, b, -\frac{1}{n})}^{\text{Canonical framing for $\mathcal{L}$}} \rightarrow \overbrace{(a+n-2, b+n)}^{\text{Canonical framing for $L_n$}}.
$$
Therefore, for every $n\geq 2$, we obtain taut foliations on all the surgeries on $L_n$ corresponding to multislopes in 

\begin{itemize}
\item  $W=(-\infty, n-1) \times \matR $;
\item  $X=(n-2, n) \times (n, +\infty)  $;
\item  $Y=(n-2, n) \times (-\infty, n)$;
\item  $Z=(-\infty, n) \times (n-1, n+1)$.
\end{itemize}
We now show that these four sets are enough to deduce that, for all $n\geq 2$, all the surgeries on $L_n$ corresponding to multislopes $(r_1, r_2)$ where $r_1 < n$ or $r_2 < n$ support a co-orientable taut foliation. In fact suppose that we have such a pair $(r_1, r_2)$. Since $L_n$ is symmetric we can suppose that $r_1 < n$ and we have the following cases:
\begin{itemize}
\item \emph{$r_1< n-1$:} in this case the pair is contained in the set $W$;
\item \emph{$n-1 \leq r_1 < n$:} if $r_2 > n$ the pair is contained in $X$, if $r_2<n$ we conclude by using the set $Y$ and if $r_2=n$ we use the set $Z$.
\end{itemize}
This concludes the proof.
\end{proof}

\section{L-spaces}\label{sec: L-spaces}

We now study the $L$-space surgeries on the links $L_n$, illustrated in Figure \ref{figure:link L_n}, and conclude the proof of Theorem \ref{thm: main theorem} and Proposition \ref{prop: link L_n}. To do this we will use the main result of \cite{RR}. We recall some definitions and fix some notation.

Let $Y$ be a rational homology solid torus, i.e. $Y$ is a compact oriented $3$-manifold with toroidal boundary such that $H_*(Y;\matQ)\cong H_*(\matD^2\times S^1; \matQ)$.

We are interested in the study of the Dehn fillings on $Y$. We define the \emph{set of slopes in Y} as
$$
Sl(Y)=\{\alpha\in H_1(\partial Y;\matZ)|\, \alpha \text{ is primitive}\}/\pm 1.
$$
We will denote with $Y(\alpha)$ the Dehn filling of $Y$ associated to $[\alpha]\in Sl(Y)$.

Notice that as a consequence of $Y$ being a rational homology solid torus, there is a distinguished slope in $Sl(Y)$ that we call the \emph{homological longitude} of $Y$ and that is defined in the following way. We denote with $i: H_1(\partial Y;\matZ)\rightarrow H_1(Y;\matZ)$ the map induced by the inclusion $\partial Y\subset Y$ and we consider a primitive element $l\in H_1(\partial Y;\matZ)$ such that $i(l)$ is torsion in $H_1(Y;\matZ)$. The element $l$ is unique up to sign, and its equivalence class $[l]\in Sl(Y)$ is the homological longitude of $Y$. This definition, which may seem counterintuitive, is given so that when $Y$ is the complement of a knot in $S^3$, the homological longitude of $Y$ coincides with the slope defined by the longitude of the knot.
\newline

We want to study the fillings on $Y$ that are $L$-spaces. 
For this reason we define the set of the \emph{$L$-space filling slopes}:
$$
\mathcal{L}(Y)=\{[\alpha]\in Sl(Y)|\,\, Y(\alpha)\text{ is an $L$-space}\}.
$$
Once we fix a basis $(\mu,\lambda)$ for $H_1(\partial Y; \matZ)$ we can identify the set $Sl(Y)$ with $\overline{\matQ}$. The following theorem is a straightforward consequence of \cite[Theorem~1.6]{RR}.

\begin{teo}\label{thm:RR}
Let $Y$ be a rational homology solid torus and let $[\alpha]\ne [\beta]$ be two slopes in $\mathcal{L}(Y)$. Then $\mathcal{L}(Y)$ contains the interval in $Sl(Y)$ between $[\alpha]$ and $[\beta]$ that does not contain the homological longitude $[l]$. \qed
\end{teo}

We want to use this result to study $L$-space surgeries on links. If $L$ is a link in $S^3$, we denote by $\mathcal{L}(L)$ the set of slopes in the exterior of $L$ such that the corresponding surgery is an $L$-space. For each component of the link we fix the canonical meridian and longitude, and in this way we can identify $\mathcal{L}(L)$ with a subset of $\overline{\matQ}^d$, where $d$ is the number of components of the link.
We fix $d=2$, i.e. we suppose that $L$ has two components $K_1$ and $K_2$. Given $(r_1,r_2)\in \overline{\matQ}^2$, we denote by 
\begin{itemize}
    \item $S^3_{r_1,r_2}(L)$ the $(r_1,r_2)$-surgery on $L$;
    \item $S^3_{r_1, \bullet}(L)$ the manifold obtained by drilling $K_2$ and performing $r_1$-surgery on $K_1$;
    \item $S^3_{\bullet,r_2}(L)$ the manifold obtained by drilling $K_1$ and performing $r_2$-surgery on $K_2$.
\end{itemize}
Recall that if $L$ has two components, then by using Mayer-Vietoris one can see that the manifold $S^3_{r_1,r_2}(L)$ is not a rational homology sphere if and only if $\{r_1, r_2\}=\{0, \infty\}$ or $r_1r_2=\lk(L)^2$, where $\lk(L)$ denotes the linking number of the components of $L$. Hence if $r_1\ne 0$ the manifold $S^3_{r_1, \bullet}(L)$ is a rational homology solid torus with homological longitude given by $\frac{\lk(L)^2}{r_1}\in \matQ$. Analogously, if $r_2\ne 0$ the manifold $S^3_{\bullet, r_2}(L)$ is a rational homology solid torus with homological longitude given by $\frac{\lk(L)^2}{r_2}\in \matQ$.

\begin{prop}\label{prop: Link with unknotted components}
Let $L$ be a two-component link with two unknotted components. Suppose that $(r_1,r_2)\in \mathcal{L}(L)$ with $r_1r_2>\lk(L)^2$ and $r_1>0,r_2>0$. Then $\big{(}[r_1,\infty]\times[r_2,\infty]\big{)}\cap \overline{\matQ}^2$ is contained in $\mathcal{L}(L)$. Analogously, if $r_1r_2>\lk(L)^2$ and $r_1<0,r_2<0$ then $\big{(}[\infty,r_1]\times[\infty,r_2]\big{)}\cap \overline{\matQ}^2$ is contained in $\mathcal{L}(L)$.
\end{prop}

\begin{proof}
The proof is the straightforward adaptation of the proof of \cite[Lemma~2.6]{S}. We report it here for convenience of the reader. We prove the proposition in the case $r_1r_2>\lk(L)^2$ and $r_1>0, r_2>0$. The other case is analogous. 
We consider the manifold $Y=S^3_{r_1,\bullet}$. We have that $r_2\in \mathcal{L}(Y)$ and since the components of $L$ are unknotted it follows that also $\infty\in \mathcal{L}(L)$. In fact $S^3_{r_1,\infty}(L)$ is a lens space, and hence an $L$-space. Thus we can deduce, by virtue of Theorem \ref{thm:RR}, that the interval between $r_2$ and $\infty$ that does not contain the homological longitude is contained in $\mathcal{L}(Y)$. The homological longitude $\frac{\lk(L)^2}{r_1}$ is smaller than $r_2$, so we deduce that $[r_2, \infty]\cap\overline{\matQ}\subset \mathcal{L}(Y)$. In other words we have proved that $S^3_{r_1,s}(L)$ is an $L$-space for all $s\geq r_2$. Now we fix $s\geq r_2$ and consider the manifold $Y_s=S^3_{\bullet,s}$. As a consequence of $r_1$ and $\infty$ belonging to $\mathcal{L}(Y_s)$, we can apply again Theorem \ref{thm:RR} and deduce that the interval between $r_1$ and $\infty$ that does not contain the homological longitude is contained in $\mathcal{L}(Y_s)$. Since $r_1\geq \frac{\lk(L)^2}{r_2}\geq \frac{\lk(L)^2}{s}$ and the last term in the chain of inequalities is the homological longitude of $Y_s$, we conclude that $[r_1,\infty]\cap \overline{\matQ}\subset \mathcal{L}(Y_s)$ for all $s\geq r_2$. This is exactly equivalent to saying that $\big{(}[r_1,\infty]\times [r_2,\infty]\big{)}\cap\overline{\matQ}^2\subset \mathcal{L}(L)$. A pictorial sketch of the proof is described in Figure \ref{figure:proof}.
\end{proof}

\begin{figure}[]
    \centering
    \includegraphics[width=1\textwidth]{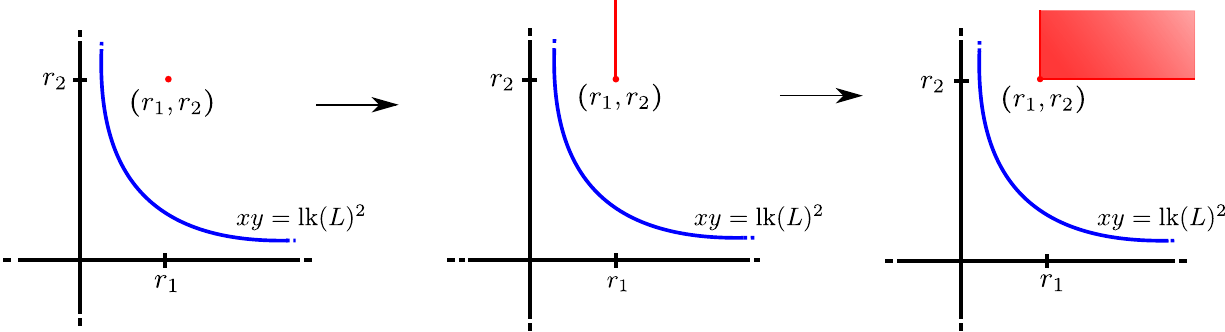}
    \caption{A pictorial sketch of the proof.}
    \label{figure:proof}
\end{figure}

We are now ready to prove the main result of this section. Recall that since two-bridge links have unknotted components, the rational homology spheres associated to surgery coefficients $(r,s)$ where at least one of $r$ and $s$ is $\infty$ are all $L$-spaces. For this reason, we will only consider the set $\mathcal{L}(L_n)\cap \matQ^2$.

\begin{prop}\label{prop: Ln is L-space link}
Let $L_n$ the link described in Figure \ref{figure:link L_n}. Then $\mathcal{L}(L_n)\cap \matQ^2=[n,+\infty)^2\cap \matQ^2$ .
\end{prop}
\begin{proof}
Since $L$-space do not support taut foliations, it follows from Proposition \ref{prop: ctf on Ln} that it suffices to prove that  
$$
[n,+\infty)^2\subset \mathcal{L}(L_n).
$$
The link $L_n$ satisfies $\lk(L_n)^2=(n-1)^2$ and its components are unknotted, hence by Proposition \ref{prop: Link with unknotted components} it is enough to prove that $(n,n)\in \mathcal{L}(L_n)$.
We can see the links $L_n$ as surgeries on a three-component link $\mathscr{L}$, as represented in Figure \ref{figure:Ln+L}. We have also fixed an orientation of this link, that we will use later in the proof.

\begin{figure}[]
    \centering
    \includegraphics[width=0.4\textwidth]{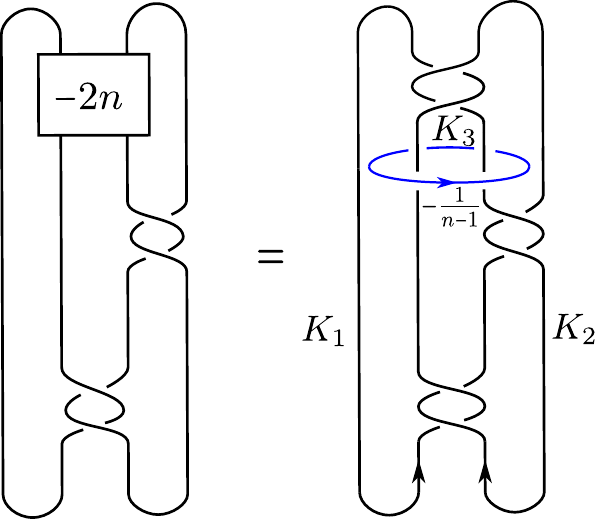}
    \caption{How to obtain the links $\{L_n\}_{n\geq 1}$ as surgeries on a $3$-component link $\mathscr{L}$.} 
    \label{figure:Ln+L}
\end{figure}
More precisely we have that $S^3_{a,b,-\frac{1}{n-1}}(\mathscr{L})=S^3_{a+n-1,b+n-1}(L_n)$. This implies that the statement is equivalent to proving that $S^3_{1,1,-\frac{1}{n-1}}(\mathscr{L})$ is an $L$-space for all $n\geq 1$ and to prove this we will apply Theorem \ref{thm:RR} to the rational homology solid torus $S^3_{1,1,\bullet}(\mathscr{L})$. Denoting this manifold by $Y$, we have:
\begin{itemize}[leftmargin=*]
    \item \emph{$\infty\in \mathcal{L}(Y)$:} in fact $S^3_{1,1,\infty}(\mathscr{L})$ is $(1,1)$-surgery on the Whitehead link. This is the Poincaré homology sphere and manifolds with finite fundamental group are $L$-spaces \cite[Proposition~2.2]{OS1};
    \item \emph{$1\in \mathcal{L}(Y)$:} in fact $S^3_{1,1,1}(\mathscr{L})$ is $(0,0)$-surgery on the Hopf link, see Figure \ref{figure:Hopf}. This manifold is $S^3$ and therefore an $L$-space;
    \item \emph{the homological longitude of $Y$ is the slope $2$:} to prove this we have to do a simple computation. We fix an orientation for the link and we denote the components of $\mathscr{L}$ with $K_1$, $K_2$ and $K_3$ as in Figure \ref{figure:Ln+L}.

    We have that $\lk(K_1,K_3)=\lk(K_2, K_3)=1$ and  $\lk(K_1,K_2)=0$. Consequently, a presentation matrix for $H_1(S^3_{1,1,\frac{p}{q}}(\mathscr{L}),\matZ)$ is given by
    $$
    A=\begin{pmatrix}
    1 &0 &q \\
    0 &1 &q \\
    1 &1 &p
    \end{pmatrix}
    $$
    and in particular $S^3_{1,1,\frac{p}{q}}(\mathscr{L})$ is not a rational homology sphere if and only if the determinant of $A$ is zero. This happens if and only if $p=2q$ and therefore $2$ is the homological longitude of the manifold $S^3_{1,1,\bullet}(\mathscr{L})$.
\end{itemize}
What we have proved implies by Theorem \ref{thm:RR} that $(-\infty, 1]\cap \matQ\subset \mathcal{L}(Y)$. In particular  $S^3_{1,1,-\frac{1}{n-1}}(\mathscr{L})$ is an $L$-space for all $n\geq 1$ and this manifold is exactly the $(n,n)$-surgery on $L_n$.
\end{proof}

\begin{figure}[]
    \centering
    \includegraphics[width=0.6\textwidth]{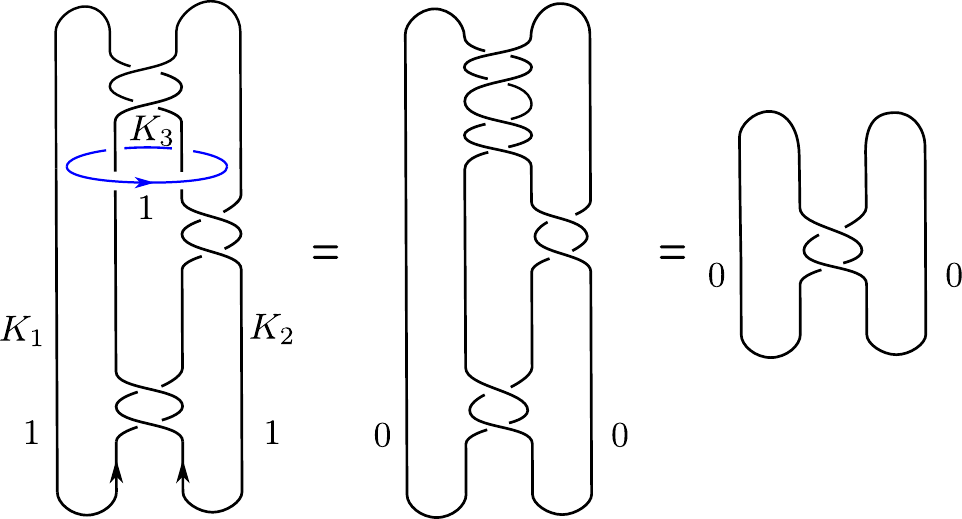}
    \caption{The $(1, 1, 1)$-surgery on $\mathscr{L}$ is $(0,0)$-surgery on the Hopf link} 
    \label{figure:Hopf}
\end{figure}
\begin{rem}
In the terminology of \cite{GN}, the links $L_n$ are \emph{$L$-space links}. In \cite{Liu1,Liu}, Liu conjectured that a two-bridge link is an $L$-space link if and only if is of the form $b(pq-1,-q)$, where $p$ and $q$ are odd positive integers. This conjecture was proved by Dawra in \cite{Dawra}. It is not difficult to prove that the link $L_n$, as an unoriented link, is isotopic to $b(6n+2,-3)$.
\end{rem}

%
%
%
\bibliographystyle{gtart}
%

\bibliography{Bib}

\end{document}